\newtheorem{prop}{Proposition}[section]
\newtheorem{coro}[prop]{Corollary}
\newtheorem{lemma}[prop]{Lemma}
\newtheorem{definition}[prop]{Definition}
\newtheorem{corollary}[prop]{Corollary}
\newtheorem{theorem}[prop]{Theorem}
\newtheorem{remark}[prop]{Remark}
\def\real{{\mathord{{\rm I\kern-2.8pt R}}}}        
\def\inte{{\mathord{{\rm I\kern-2.8pt N}}}}
\def\sZZ{{\rm Z\kern-2.8ptem{}Z}}
\def\z{{\mathchoice
		{\sZZ}
		{\sZZ}
		{\rm Z\kern-0.30em{}Z}
		{\rm Z\kern-0.25em{}Z} }}
\def\sQQ{{\kern 0.27em \vrule height1.45ex width0.03em depth0em
		\kern-0.30em \rm Q}}
\def\qu{{\mathchoice
		{\sQQ}
		{\sQQ}
		{\kern 0.225em \vrule height1.05ex width0.025em depth0em \kern-0.25em \rm Q}
		{\kern 0.180em \vrule height0.78ex width0.020em depth0em \kern-0.20em \rm Q}
}}
\def\sCC{{\kern 0.27em \vrule height1.45ex width0.03em depth0em
		\kern-0.30em \rm C}}
\def\complex{{\mathchoice
		{\sCC}
		{\sCC}
		{\kern 0.225em \vrule height1.05ex width0.025em depth0em \kern-0.25em \rm C}
		{\kern 0.180em \vrule height0.78ex width0.020em depth0em \kern-0.20em \rm C}
}}
\newcommand{\ba}{\begin{array}}
	\newcommand{\ea}{\end{array}}
\newcommand{\be}{\begin{equation}}
	\newcommand{\ee}{\end{equation}}
\newcommand{\bea}{\begin{eqnarray}}
	\newcommand{\eea}{\end{eqnarray}}
\newcommand{\beaa}{\begin{eqnarray*}}
	\newcommand{\eeaa}{\end{eqnarray*}}
\newcommand{\eps}{\varepsilon}
\def\z{\zeta}
\font\tenmath=msbm10 \font\sevenmath=msbm7 \font\fivemath=msbm5
\def \={{\buildrel {\rm (law)} \over =}}
\newcommand{\basa}{\begin{assumption}}
	\newcommand{\easa}{\end{assumption}}
\newcommand{\bas}{\begin{assum}}
	\newcommand{\eas}{\end{assum}}
\newcommand{\ignore}[1]{}
\begin{document}
	
	\renewcommand{\thefootnote}{\fnsymbol{footnote}}
	\renewcommand{\thefootnote}{\fnsymbol{footnote}}
	
	\title{Quartic variation of the solution to the semilinear stochastic heat equation: limit behavior and asymptotic independence with respect to the   data  }
	
	\author{I. C\^impean \footnote{
			I. C\^impean acknowledges support from the Ministry of Research, Innovation and Digitalization (Romania), grant CF-194-PNRR-III-C9-2023.  } $^{1,2}$ \hskip0.2cm 
		Yassine Nachit $^{3}$ \hskip0.2cm  
		Ciprian A. Tudor \footnote{	 C. Tudor also acknowledges support from   the  ANR project SDAIM 22-CE40-0015, the MATHAMSUD grant 24-MATH-04 SDE-EXPLORE, Japan Science and Technology Agency CREST (grant JPMJCR2115) and  by the Ministry of Research, Innovation and Digitalization (Romania), grant CF-194-PNRR-III-C9-2023.} $^{3}$ \vspace*{0.1in} \\
		$^{1}$ University of Bucharest\\
		Faculty of Mathematics and Computer Science\\
		$^{2}$   Simion Stoilow Institute of
		Mathematics of the Romanian Academy\\
		RO-014700 Bucharest Romania.\\
		\quad iulian.cimpean@unibuc.ro\\
		\vspace*{0.1in} \\
		$^{3}$  Universit\'e de Lille, CNRS\\
		Laboratoire Paul Painlev\'e UMR 8524\\
		F-59655 Villeneuve d'Ascq, France.\\
		\quad yassine.nachit@univ-lille.fr\\
		\quad ciprian.tudor@univ-lille.fr\vspace*{0.1in}}
	\maketitle
	
	\begin{abstract}
		This work concerns the limit behavior of the quartic variation (i.e., the power variation of order four) with respect to the time variable of the solution to the semilinear stochastic heat equation with space-time white noise. 
		In a first step, we prove that this sequence satisfies a Central Limit Theorem and we deduce a similar result for the viscosity parameter estimator associated with the quartic variation. 
		Then, by using a recent variant of the Stein-Malliavin calculus, we analyze the asymptotic independence between the quartic variation (as well as the associated viscosity parameter estimator) and the data used to construct it.   
	\end{abstract}
	
	{\bf 2010 AMS Classification Numbers:}  60G15, 60H05, 60G18.

	\vskip0.3cm

	{\bf Key Words and Phrases}: Stochastic heat equation; Stein-Malliavin calculus; asymptotic independence;  parameter estimation.
	
	\section{Introduction}
	The Stein-Malliavin calculus, initially introduced in \cite{NP1}, constitutes a probabilistic theory which allows to evaluate the distance between the probability distributions of two random variables in terms of certain operators from Malliavin calculus. We refer to the monograph \cite{NP-book} for a detailed exposition of this theory. A more recent variant of this theory has been developed in the references \cite{Pi} and \cite{T2}.  Its purpose is to obtain bounds, again in terms of the Malliavin operators, for the distance between the distribution $\mathbb{P}_{(X,\mathbf{Y})}$ of a random vector $(X, \mathbf{Y})$ ($X$ is a scalar random variable and $\mathbf{Y}=(Y_{1},...,Y_{d})$ is a $d$-dimensional random vector) and the law of the random vector $(Z, \mathbf{Y})$ with $Z$ Gaussian and independent of $\mathbf{Y}$, i.e., $\mathbb{P}_{Z}\otimes \mathbb{P}_{\mathbf{Y}}$. 
	More precisely, if we assume that $X$ is centered and together with $Y_{j}, j=1,...,d$, belong to the Sobolev space $\mathbb{D}^{1,4}$ (in particular, they are differentiable in the Malliavin sense), then the main result in \cite{T2} states that, if $d_{W}$ stands for the Wasserstein distance and $Z \sim N(0, \sigma ^{2})$,
	\begin{eqnarray}\label{intro-1}
		&&	d_{W} \left( \mathbb{P}_{(X, \mathbf{Y})}, \mathbb{P}_{Z} \otimes \mathbb{P}_{\mathbf{Y}}\right) \\
		&&\leq  C \left[ \sqrt{ \mathbb{E} \left[\left( \langle D(-L) ^{-1}X, DX\rangle -\sigma ^{2} \right) ^{2}\right]}+ \sqrt{ \sum_{j=1}^{d} \mathbb{E} \left[\langle D(-L) ^{-1}X, DY_{j}\rangle ^{2}\right]}\right],\nonumber
	\end{eqnarray}
	where $D$ is the Malliavin derivative and $(-L)^{-1}$ is the pseudo-inverse of the Ornstein-Uhlenbeck operator (see the Appendix for their definitions).  
	The inequality (\ref{intro-1}) extends the classical Stein-Malliavin bound obtained in \cite{NP1} which states that if $X \in \mathbb{D} ^{1, 4}$ and $ Z\sim N(0, \sigma ^{2})$, then 
	\begin{equation*}
		d_{W} \left( \mathbb{P}_{X}, \mathbb{P}_{Z}\right) \leq C  \sqrt{ \mathbb{E} \left[\left( \langle D(-L) ^{-1}X, DX\rangle -\sigma ^{2} \right) ^{2}\right]}.
	\end{equation*}
	We notice that in the right-hand side of (\ref{intro-1}), the first term quantifies the distance between $X$ and the Gaussian random variable $Z$, while the second summand is interpreted as a  measure  for the dependence between $X$ and the components of $\mathbf{Y}$. 
	
	This variant of the Stein-Malliavin calculus allows to analyze the asymptotic independence of random sequences. Let $(X_{N}, N\geq 1)$ be a sequence of random variables which converges in law to $Z\sim N(0, \sigma ^{2})$ and let $(\mathbf{Y}_{N}, N\geq 1)=((Y_{N,1},...,Y_{N,d}), N\geq 1)$ be an arbitrary sequence of random vectors with components regular enough in the Malliavin sense. The above estimate (\ref{intro-1}) permits evaluating the following Wasserstein distance
	\begin{equation*}
		d_{W} \left( \mathbb{P}_{(X_{N}, \mathbf{Y}_{N})}, \mathbb{P}_{Z}\otimes \mathbb{P}_{\mathbf{Y}_{N}}\right), \quad \mbox{for each } N\geq 1.
	\end{equation*}
	When this distance converges to zero, we can conclude that the random sequences $(X_{N}, N\geq 1)$  and $(\mathbf{Y}_{N}, N\geq 1)$ are {\it asymptotically independent}, meaning that, for large $N$, the distribution of the vector $(X_{N}, \mathbf{Y}_{N})$ is approximately the product of its two marginals. This method has been employed in  \cite{T2} or   \cite{TZ} to analyze the asymptotic independence in the context of random matrices, central or non-central limit theorems, or stochastic partial differential equations. 
	
	Here, the purpose is to apply this theory to the power variation of the solution to the stochastic heat equation and to certain related statistical estimators. The parameter estimation for stochastic partial differential equations (SPDEs in the sequel), in particular   via power variations,  constitutes nowadays a very active research direction. The reader may consult the website https://sites.google.com/view/stats4spdes for a  vast bibliography on this topic.  The reader may also consult \cite{Cia} or the recent monograph \cite{T3}  for a survey of the estimation techniques for SPDEs with additive Gaussian noise. Other  references on statistical inference for SPDEs via power variation are, among others,  \cite{BT1}, \cite{CD}, \cite{CDK}, \cite{CKP},  \cite{GT}, \cite{HiTr}, \cite{MahTud2} or \cite{PoTr}.  
	
	Actually, we consider the semilinear stochastic heat equation driven by a space-time white noise, i.e., the model \eqref{1} introduced in Section \ref{sec211}. 
	A strongly consistent  estimator for the viscosity parameter $\theta >0$ (it is sometimes called drift parameter in the literature, but we choose to call it viscosity parameter in this work in order to avoid confusion with the drift coefficient $b$) in \eqref{1}, more precisely the sequence $\widehat{\theta}_{N,x}(u)$ given by \eqref{est}, can be constructed by using the quartic variation in time of the solution (the random sequence denoted by $V_{N, x}(u)$  and given by (\ref{vnx})).  
	
	This estimator is actually defined in terms of the observations 
	\begin{equation*}
		( u(t_{i}, x), i=1,...,N), \quad t_{i}= \frac{i}{N}, 1\leq i \leq N, \quad \mbox{whilst } x\in \mathbb{R} \mbox{ is being fixed}. 
	\end{equation*}
	We consider a random vector $\mathbf{Y}_{N,x}(u)$ of the form 
	\begin{equation*}
		\mathbf{Y}_{N,x}(u) =(u(t_{i}, x), i\in J_{N}), \quad \mbox{where } J_{N} \mbox{ is a subset of } \{1,2,...,N\}.
	\end{equation*}
	So, the vector $\mathbf{Y}_{N,x}(u)$ contains a part of the available observations. 
	Our basic assumption is $	1\leq card(J_{N}):=m(N)\leq N$, where $card (A)$ denotes the cardinality of the set $A$. 
	In this way, the information included in $ \mathbf{Y}_{N, x}(u)$ may contain all the data (when $J_{N}=\{1,...,N\}$ and so $m(N)=N$)  or only a strict part of the data (when $m(N)<N$). 
	We shall first evaluate the following distance
	\begin{equation*}
		\varphi(N):= d_{W}\left( \mathbb{P}_{(U_{N,x}(u), \mathbf{Y}_{N,x}(u))},  \mathbb{P}_{Z}\otimes \mathbb{P}_{\mathbf{Y}_{N,x}(u)}\right), \quad N\geq 1,
	\end{equation*}
	where $(U_{N,x}(u), N\geq 1)$ is the renormalized quartic variation in time of the solution to (\ref{1}), i.e., the sequence given by (\ref{19a-3}). In some situations, depending on the number of observations contained in the random vector $\mathbf{Y}_{N,x}(u)$, the quantity $\varphi(N)$ converges to zero as $N$ tends to infinity. When this happens, we deduce that the  law of the vector $(U_{N,x}(u), \mathbf{Y}_{N,x})$ is close, for $N$ large, to the law of the vector with the same marginals but with independent components. In other words, $U_{N, x}(u)$ is asymptotically independent of the observation vector $\mathbf{Y}_{N,x}(u)$. 
	In order to deduce that also the (renormalized) associated estimator $\widehat{\theta}_{N,x}(u)$ is asymptotically independent of the data $ \mathbf{Y}_{N,x}(u)$, we need to introduce and analyze some weaker notions of asymptotic independence; we do this systematically in \Cref{sec4}.
	Once we achieve this, we can conclude that the extreme values of the measured data do not affect the asymptotic distribution of the estimator. 
	A more detailed discussion around the asymptotic independence can be found in the reference \cite{DN}.
	
	We organized our work as follows. Section 2 contains some preliminaries on the solution to the stochastic heat equation. We also recall some known facts concerning its temporal quartic variation and the associated estimator for the viscosity parameter. In Section 3 we analyze the limit behavior of this quartic variation and, in particular, we derive its rate of convergence to the Gaussian law under the Wasserstein distance. In Sections 4 and 5  we discuss the asymptotic independence of the temporal quartic variation and of the associated viscosity  parameter estimator with respect to the data from which these sequences are constructed. The last section is the Appendix, where we included the basic tools from Malliavin calculus and Wiener chaos needed in our work.

	\section{Preliminaries}
	In this section, we present the basic properties of the solution to the semilinear stochastic heat equation on $\mathbb{R}$.
	We also introduce the quartic variation of the solution with respect to its temporal variable and the connection to the viscosity parameter estimator. 
	
	\subsection{The solution to the semilinear stochastic heat equation}\label{sec211}
	Our work concerns the following SPDE 
	\begin{equation}\label{1}
		\frac{\partial u}{\partial t} (t,x)= \frac{\theta}{2} \Delta u(t,x) + b(u(t,x))+ \dot{W}(t,x), \mbox{ for } t> 0,\; x\in \mathbb{R},
	\end{equation}
	with vanishing initial value $u(0, x)=0$ for every $x\in \mathbb{R}$. 	
	In \eqref{1}, $W$ is a space-time Gaussian white noise which is defined as a  centered Gaussian field $\left( W(t,A), t\geq 0, A\in \mathcal{B}_{b} (\mathbb{R})\right)$ with covariance
	\begin{equation}\label{covW}
		\mathbb{E} \left[W(t,A) W(s, B)\right] = (t\wedge s) \lambda (A \cap B), \hskip0.3cm s,t \geq 0,\; A, B \in \mathcal{B}_{b} (\mathbb{R}),
	\end{equation}
	where $\lambda $ denotes the Lebesgue measure on $\mathbb{R}$ and $\mathcal{B}_{b} (\mathbb{R})$ is  the collection of bounded Borel subsets of $\mathbb{R}$. 
	Furthermore, the drift  coefficient $ b:\mathbb{R} \to \mathbb{R}$ is assumed to be globally Lipschitz continuous,  i.e., there exists $L>0$ such that
	\begin{equation}
		\label{cc} \vert b(x)-b(y)\vert \leq L \vert x-y\vert, \quad x, y \in \mathbb{R}.
	\end{equation}
	Finally, $\theta>0$ is a constant parameter whose estimation is the central topic of this work.
	
	As usual, the solution to (\ref{1}) is understood in the mild sense, i.e., $u$ is a solution to (\ref{1}) if it satisfies, for every $t\geq0, x\in \mathbb{R}$,
	\begin{equation}\label{mild}
		u(t,x)= \int_{0} ^{t} \int_{\mathbb{R}} G(\theta (t-s), x-y)W(ds, dy) +  \int_{0} ^{t} \int_{\mathbb{R}} G(\theta (t-s), x-y)b(u(s,y)) dy ds.
	\end{equation}
	In (\ref{mild}), the integral $ W(ds, dy)$ is a Wiener integral with respect to the Gaussian random field $W$ (see e.g., \cite{T3}) and $G$ is the Green kernel associated to the heat equation, i.e., 
	\begin{equation}\label{G}
		G(t,x)= \frac{1}{\sqrt{2\pi t}} e ^{-\frac{ x^{2}}{2t}} \mbox{ for } t>0, \; x\in \mathbb{R}.
	\end{equation}
	It  is known (see e.g., \cite{DS1}) that under the assumption (\ref{cc}), the SPDE (\ref{1}) admits a unique mild solution $ u\in L^{2}(\mathbb{R}_{+}\times \mathbb{R})$. Moreover, this solution satisfies, for every $T>0$ and for every $p\geq 1$, 
	\begin{equation*}
		\sup_{t\in [0,T], x\in \mathbb{R}	}\mathbb{E} \left[\vert u(t,x) \vert ^{p}\right] \leq C_{T,p},
	\end{equation*}
	where $ C_{T, p}$ is a strictly positive constant depending on $T$ and $p$. We refer to Theorem 4.2.1 in  \cite{DS1} for these results.
	
	\subsection{The quartic variation and the estimator for the viscosity parameter}\label{sec21}
	Let us introduce the quartic variation in time of the mild solution (\ref{mild}) and the associated estimator for the viscosity parameter $\theta$ in (\ref{1}).  Let $N\geq 1$ and let 
	\begin{equation}\label{ti}
		t_{i}= \frac{i}{N}, \mbox{ for } i=0,1,...,N,
	\end{equation}
	be an equidistant  partition of the unit interval $[0,1]$.  
	For a fixed $x\in \mathbb{R}$, we set
	\begin{equation}\label{vnx}
		V_{N, x}(u)= \sum_{i=0} ^{N-1} \left( u( t_{i+1}, x)-u(t_{i}, x) \right) ^{4}, \quad N\geq 1.
	\end{equation}
	The sequence $(V_{N, x}(u), N \geq 1)$ constitutes {\it  the quartic variation in time } of the solution to the semilinear stochastic heat equation (\ref{1}). 
	It is known that in the case of the linear stochastic heat equation (i.e., $b\equiv 0$ in (\ref{mild})), the solution admits a nontrivial quartic variation in time, i.e., $V_{N, x}(u)$ converges, as $N\to\infty$, to an explicit  constant. We refer to  \cite{PoTr}, \cite{Sw}, or \cite{T3}. 
	First, we extend this result to the SPDE \eqref{1}, namely, we show that 
	\begin{equation}\label{19a-2}
		\lim_{N\to \infty}V_{N, x}(u)=\frac{6}{\pi \theta} \quad \mbox{ in  }L ^{1}(\mathbb{P}).
	\end{equation} 
	We will also prove that $ V_{N, x}(u)$ satisfies a Central Limit Theorem (CLT),  i.e. 
	\begin{equation}\label{19a-3}
		U_{N,x}(u):=	\sqrt{N} \left( V_{N, x}(u)-  \frac{6}{\pi \theta}\right) \overset{(d)}{\rightarrow} Z, \quad \mbox{as } N\to\infty,
	\end{equation}
	where $Z$ is a centered normally distributed random variable with variance $\sigma ^{2}>0$. The notation  $\overset{(d)}{\rightarrow}$ stands for the convergence in distribution. 
	Motivated by (\ref{19a-2}), we can define the following estimator for the viscosity parameter $\theta$
	\begin{equation}\label{est}
		\widehat{\theta}_{N,x}(u)= \frac{6}{\pi V_{N, x}(u)}, \mbox{ for every } N\geq 1.
	\end{equation}
	It follows immediately from (\ref{19a-2}) that $ 	\widehat{\theta}_{N,x}(u)$ is a consistent estimator for $\theta$, i.e. $	\widehat{\theta}_{N,x}(u)\to \theta $ in probability, as $ N\to \infty$. We will also deduce, via (\ref{19a-3}), that this estimator is asymptotically Gaussian in the sense that 
	\begin{equation}\label{eq:CLT-theta}
		\sqrt{N} \left( 	\widehat{\theta}_{N,x}(u)-\theta\right) \overset{(d)}{\rightarrow} Z_{1} \sim N(0, \sigma _{1} ^{2}), \quad \mbox{as } N\to \infty,
	\end{equation}
	where the variance $\sigma _{1}^2>0$ can be expressed in terms of $\sigma$ from (\ref{19a-2}). 
	Further, our purpose is to analyze the asymptotic independence between the renormalized estimator $	\widehat{\theta}_{N,x}(u)$ and the data from which it is constructed, i.e., the observations $(u(t_{i}, x), i=0,1,..., N)$. 
	As mentioned in the introduction, we will consider a part of this data, which is contained in the  random vector 
	\begin{equation}\label{ynx}
		\mathbf{Y}_{N, x}(u)= \left( u(t_{i}, x), i\in J_{N}\right),
	\end{equation}
	where $J_{N} \subset \{1,2,..,, N\}$ is such that 
	\begin{equation}\label{mn}
		1\leq m(N):=card (J_{N})\leq N.
	\end{equation}
	The random vector $\mathbf{Y}_{N, x}(u)$ may contain all the data (when $ J_{N}= \{1,2,..., N\}$) or only a part of it. An example is $J_{N}=\{1,2,..., [N ^{\gamma}]\}$ with $0\leq \gamma <1$. 
	
	To understand how the data contained in $\mathbf{Y}_{N, x}(u)$ affects the limit distribution of $	\widehat{\theta}_{N,x}(u)$, we will measure the Wasserstein distance between the following probability distributions 
	\begin{equation*}
		\mathbb{P}_{ (\overline{\theta}_{N,x}(u), \mathbf{Y}_{N, x}(u)))} \mbox{ and } \mathbb{P}_{Z_{1}}\otimes \mathbb{P}_{\mathbf{Y}_{N, x}(u)},
	\end{equation*}
	with $Z_{1} $ from \eqref{eq:CLT-theta} and with the notation 
	\begin{equation*}
		\overline{\theta}_{N,x}(u)=	\sqrt{N} \left( \widehat{\theta}_{N,x}(u)-\theta\right), \quad N\geq 1.
	\end{equation*}
	Since $	\overline{\theta}_{N,x}(u)$ is essentially defined in terms of the quartic variation $ V_{N,x}(u)$, our approach focuses on analyzing first the distance between the following probability distributions
	\begin{equation*}
		\mathbb{P}_{ (U_{N,x}(u), \mathbf{Y}_{N, x}(u))} \mbox{ and } \mathbb{P}_{Z}\otimes \mathbb{P}_{\mathbf{Y}_{N,x}(u)}, \quad N\geq 1,
	\end{equation*}
	with $ U_{N, x}(u)$ defined by (\ref{19a-3}) and $Z$ being the limit distribution in (\ref{19a-3}). This will be done by using the multidimensional Stein-Malliavin calculus introduced in \cite{Pi} and \cite{T2}, based on the bound (\ref{intro-1}).
	When the Wasserstein distance between the two probability distributions from above converges to zero as $N\to \infty$, we will conclude that the sequences $ (U_{N, x}(u), N\geq 1)$ and $(\mathbf{Y}_{N, x}(u), N\geq 1)$ are asymptotically independent (in the Wasserstein sense). As already mentioned, to pass from the asymptotic independence of $ (U_{N, x}(u), N\geq 1)$ and $(\mathbf{Y}_{N, x}(u), N\geq 1)$ to the asymptotic independence of $ (\overline{\theta}_{N, x}(u), N\geq 1)$ and $(\mathbf{Y}_{N, x}(u), N\geq 1)$, we need to introduce and rely on some weaker concepts of asymptotic independence, see Section \ref{sec4}.

	\section{Asymptotic behavior of the quartic variation}
	In this section, we focus on the limit behavior of the quartic variation sequence defined by \eqref{vnx}. 
	We prove that it satisfies a CLT and we deduce its rate of convergence under the Wasserstein distance. We  start by treating the case of the linear stochastic heat equation (i.e., the drift coefficient is such that  $b\equiv 0$ in (\ref{1}), in which case the solution is a Gaussian random field) and then we consider the semilinear case. 
	
	\subsection{ The quartic variation of the solution to the linear stochastic heat equation}
	
	If $b\equiv0$, the mild solution $u$ given by \eqref{mild} shall be further denoted by $u_0$ and it takes the explicit form 
	\begin{equation}\label{mild2}
		u_{0}(t,x)= \int_{0} ^{t} \int_{\mathbb{R}} G(\theta (t-s), x-y)W(ds, dy),
	\end{equation}
	with $G$ given by (\ref{G}).  
	Recall that it solves the stochastic heat equation with additive space-time white noise, i.e.,
	\begin{equation*}
		\frac{\partial u_{0}}{\partial t}(t,x)=\frac{\theta}{2} \Delta u_{0} (t,x) + \dot{W}(t,x), \hskip0.4cm t> 0, x\in \mathbb{R},
	\end{equation*} 
	with vanishing initial condition $u_{0}(0, x)=0$ for every $x\in \mathbb{R}$. Then $ (u_{0}(t,x), t\geq 0, x\in \mathbb{R})$ is a centered Gaussian random field and its covariance with respect to the time variable can be expressed as (see e.g., \cite{PoTr}, \cite{T3})
	\begin{equation*}\label{eq:cov-linear}
		\mathbb{E} \left[u_{0} (t,x) u_{0} (s, x)\right]= \frac{1}{\sqrt{2\pi \theta}}\left( \sqrt{ t+s}-\sqrt{\vert t-s \vert }\right), \quad s,t\geq 0, x\in \mathbb{R}.
	\end{equation*}
	Moreover,  for any $p\geq 1$, 
	\begin{equation}\label{20a-3}
		\mathbb{E}\left[\vert u_{0} (t,x)- u_{0}(s,x) \vert ^{p}\right] \leq C\vert t-s\vert ^{ \frac{p}{4}}, \quad s,t\geq 0.
	\end{equation}
	This implies that   for every $x\in \mathbb{R}$, there is a modification (denoted also by $u_{0}$), such that the temporal sample path $t\to u_{0} (t,x)$ is H\"older continuous of order $\delta$ for every $\delta \in \left(0, \frac{1}{4}\right)$.  We can also write, for $i=0,1,...,N-1$, 
	\begin{equation}\label{21a-1}
		u_{0}(t_{i+1}, x)- u_{0} (t_{i}, x)= I_{1} (g_{i,x}),
	\end{equation}
	where $I_{p}$ denotes  the multiple integral of order $p\geq 1$ with respect to the white-noise $W$ (see Section \ref{s:malliavin}) and 
	\begin{equation}\label{gix}
		g_{i,x}(s,y)= G (\theta (t_{i+1}-s), x-y)1_{ (0, t_{i+1})}(s)- G (\theta (t_{i}-s), x-y)1_{ (0, t_{i})}(s).
	\end{equation} 
	Let us also recall some useful estimates concerning the mild solution (\ref{mild2}).  
	For $N\geq 1$ and $x\in \mathbb{R}$ we use the notation
	\begin{equation}
		u_{0} (t_{i+1},x)- u_{0} (t_{i}, x)= \Delta _{i} u_{0} (x), \quad  0\leq i\leq N-1.\label{23s-4}
	\end{equation}

	\begin{lemma} Consider the random field $(u_{0}(t,x), t\geq 0, x\in \mathbb{R})$ given by (\ref{mild2}). 
		Let $N\geq 1$ and $ i=0,1,...,N-1$. 
		Then there exists a constant $C$ such that
		\begin{equation}
			\label{a1}
			\left| \mathbb{E}\left[(\Delta _{i} u_{0}(x))^{2}\right]- \frac{\sqrt{2}}{\sqrt{\pi \theta N}}\right| \leq C \frac{1}{N ^{2}},
			\quad \mbox{and} \quad \left| \mathbb{E}\left[(\Delta _{i} u_{0}(x))^{4}\right]- \frac{6}{\pi \theta N }\right| \leq C \frac{1}{ N ^{\frac{5}{2}}},
		\end{equation}
		and for $i\not=j$,
		\begin{equation}
			\label{a5}
			\left| \mathbb{E} \left[\Delta _{i} u_{0}(x)\Delta _{j} u_{0}(x)\right]\right| \leq C\frac{1}{N^{2}}, \quad N\geq 1.
		\end{equation}
	\end{lemma}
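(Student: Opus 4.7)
My plan is to prove all three estimates by direct computation based on the explicit covariance $R(s,t) := \mathbb{E}[u_0(s,x)u_0(t,x)] = \frac{1}{\sqrt{2\pi\theta}}\bigl(\sqrt{s+t}-\sqrt{|s-t|}\bigr)$, which is itself a consequence of the Wiener--It\^o isometry applied to \eqref{21a-1}--\eqref{gix} together with the semigroup identity $\int_{\mathbb{R}} G(a,z)G(b,z)\,dz = G(a+b,0) = 1/\sqrt{2\pi(a+b)}$. In each case I will derive a closed-form expression for the quantity of interest, isolate the leading term, and control the residual by a Taylor expansion of $t\mapsto\sqrt{t}$.

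For the second-moment bound in \eqref{a1}, expanding
$\mathbb{E}[(\Delta_i u_0)^2] = R(t_{i+1},t_{i+1}) - 2R(t_{i+1},t_i) + R(t_i,t_i)$ and substituting $t_i = i/N$ simplifies to
\begin{equation*}
\mathbb{E}\bigl[(\Delta_i u_0(x))^2\bigr] = \frac{1}{\sqrt{\pi\theta N}}\Bigl[\sqrt{i+1}+\sqrt{i}-\sqrt{4i+2}+\sqrt{2}\Bigr].
\end{equation*}
The bracketed quantity has leading constant $\sqrt{2}$, and the remainder $\sqrt{i+1}+\sqrt{i}-\sqrt{4i+2}$ is a discrete second difference of $t\mapsto\sqrt{t}$ at three points separated by $2$; a second-order Taylor expansion of $\sqrt{\cdot}$ around $t=4i+2$ reveals that it is $O(i^{-3/2})$, which after dividing by $\sqrt{\pi\theta N}$ yields the claimed $O(N^{-2})$ bound. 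For the fourth-moment estimate I would use the Gaussianity of $\Delta_i u_0(x)$, so that $\mathbb{E}[(\Delta_i u_0)^4] = 3\bigl(\mathbb{E}[(\Delta_i u_0)^2]\bigr)^2$; squaring $\sqrt{2}/\sqrt{\pi\theta N} + O(N^{-2})$ and expanding gives $6/(\pi\theta N) + O(N^{-5/2})$.

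For the cross-covariance \eqref{a5} with $i\neq j$, I write
\begin{equation*}
\mathbb{E}\bigl[\Delta_i u_0 \cdot \Delta_j u_0\bigr] = R(t_{i+1},t_{j+1}) - R(t_{i+1},t_j) - R(t_i,t_{j+1}) + R(t_i,t_j),
\end{equation*}
a discrete mixed second difference of $R$ on the grid of step $1/N$. Substituting the explicit form of $R$, the smooth part $\sqrt{s+t}$ contributes a term of size $N^{-2}\,\partial_s\partial_t\sqrt{s+t} = O\bigl(N^{-2}(s+t)^{-3/2}\bigr)$, which is comfortably $O(N^{-2})$; the non-smooth part reduces to the elementary combination $\frac{1}{\sqrt{N}}\bigl(\sqrt{|i-j+1|} - 2\sqrt{|i-j|} + \sqrt{|i-j-1|}\bigr)$, which I would control by iteratively applying the identity $\sqrt{a}-\sqrt{b} = (a-b)/(\sqrt{a}+\sqrt{b})$ to expose the telescoping structure. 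The main obstacle lies exactly here: the second partial derivative of $\sqrt{|s-t|}$ blows up as $|s-t|\to 0$, so a crude Taylor estimate fails for small $|i-j|$, and the uniform $N^{-2}$ rate must be extracted from the exact algebraic cancellations in the four-term difference rather than term-by-term.
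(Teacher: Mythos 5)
The paper itself does not prove this lemma (it only cites \cite[Section 3.2]{PoTr}), so your direct computation from the explicit covariance $R(s,t)=\frac{1}{\sqrt{2\pi\theta}}(\sqrt{s+t}-\sqrt{|s-t|})$ is the right self-contained route, and your closed-form expression $\mathbb{E}[(\Delta_iu_0(x))^2]=\frac{1}{\sqrt{\pi\theta N}}\bigl[\sqrt{2}+\sqrt{i+1}+\sqrt{i}-\sqrt{4i+2}\bigr]$ is correct, as is your identification of the bracketed remainder as $O\bigl((1+i)^{-3/2}\bigr)$. The gap is in the very last inference: $N^{-1/2}(1+i)^{-3/2}\leq CN^{-2}$ only when $i\gtrsim N$, so "dividing by $\sqrt{\pi\theta N}$ yields the claimed $O(N^{-2})$ bound" is a non sequitur for small $i$. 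Indeed your own formula at $i=0$ gives a discrepancy of exactly $(\sqrt{2}-1)/\sqrt{\pi\theta N}=\Theta(N^{-1/2})$, so the first estimate in \eqref{a1} cannot hold uniformly over $i=0,\dots,N-1$; the fourth-moment estimate (correctly reduced via Gaussianity to the second moment) inherits the same failure.

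The situation for \eqref{a5} is worse, and your closing remark that "the uniform $N^{-2}$ rate must be extracted from the exact algebraic cancellations" points at a cancellation that does not exist. Taking $j=i+1$ in your four-term difference gives
\begin{equation*}
\mathbb{E}\bigl[\Delta_iu_0(x)\Delta_{i+1}u_0(x)\bigr]=\frac{1}{\sqrt{2\pi\theta N}}\Bigl[\bigl(\sqrt{2i+3}-2\sqrt{2i+2}+\sqrt{2i+1}\bigr)+\bigl(\sqrt{2}-2\bigr)\Bigr],
\end{equation*}
which is $\Theta(N^{-1/2})$: the increments of $t\mapsto u_0(t,x)$ behave like those of a fractional Brownian motion with Hurst index $1/4$, whose lag-one correlation is of order one, not $o(1)$. (Your estimate of the smooth part as $N^{-2}\partial_s\partial_t\sqrt{s+t}$ is also only $O(N^{-1/2}(i+j)^{-3/2})$, again not uniformly $O(N^{-2})$.) What your computations actually establish, and what the downstream arguments of the paper genuinely need after summation over indices, are the index-dependent bounds $\bigl|\mathbb{E}[(\Delta_iu_0(x))^2]-\tfrac{\sqrt{2}}{\sqrt{\pi\theta N}}\bigr|\leq CN^{-1/2}(1+i)^{-3/2}$ and $\bigl|\mathbb{E}[\Delta_iu_0(x)\Delta_ju_0(x)]\bigr|\leq CN^{-1/2}\bigl[(1+|i-j|)^{-3/2}+(1+i+j)^{-3/2}\bigr]$; you should prove these (which your Taylor/conjugation arguments do yield) rather than the uniform $N^{-2}$ bounds as stated, which are false.
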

	\begin{proof}
		The estimates (\ref{a1}) and (\ref{a5}) have been proven in \cite[Section 3.2]{PoTr}. 
	\end{proof}
	
	Let us define, for $N\geq 1$ and $x\in \mathbb{R}$,
	\begin{equation}\label{vnx2}
		V_{N, x}(u_{0}) =\sum_{i=0} ^{N-1} \left( u_{0} (t_{i+1}, x)- u_{0}(t_{i}, x) \right) ^{4},
	\end{equation}
	with $ t_{i}, i=0,1,...,N$ given by \eqref{ti}.

	We  state the main result concerning the behavior of the sequence defined by (\ref{vnx2}). The result is partially known,  see Propositions 8.4 and 8.5 in \cite{T3} (see also \cite{Sw} or \cite{MahTud2}), but some estimates in the proof  are new and needed in the sequel.
	\begin{prop}\label{pp11}
		Consider the sequence $ \left( 	V_{N, x}(u_{0}) , N\geq 1\right)$ given by (\ref{vnx2}). Then \begin{equation}\label{22n-22}
			\lim_{N\to \infty} V_{N, x}(u_{0})= \frac{ 6}{\pi \theta}\quad  \mbox{ almost surely and in } L^{2} (\mathbb{P}).
		\end{equation}
		Moreover, for $N$ large enough, 
		\begin{equation}\label{20n-5}
			\mathbb{E} \left[\left| V_{N, x} (u_{0})- \frac{ 6}{\pi \theta}\right| ^{2}\right] \leq C\frac{1}{N}. 
		\end{equation}
	\end{prop}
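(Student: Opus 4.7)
The plan is to exploit the Gaussian nature of $u_{0}$ and the representation $\Delta_{i}u_{0}(x)=I_{1}(g_{i,x})$ from \eqref{21a-1} to expand each summand of $V_{N,x}(u_{0})$ into Wiener chaoses of orders $0$, $2$ and $4$, after which the estimates \eqref{a1}--\eqref{a5} handle the bookkeeping. Concretely, denoting $\sigma_{i}^{2}=\|g_{i,x}\|^{2}=\mathbb{E}[(\Delta_{i}u_{0}(x))^{2}]$ and using the product formula (equivalently, the Hermite expansion $x^{4}=H_{4}(x)+6H_{2}(x)+3$ applied to the standardized variable), I would write
\begin{equation*}
(\Delta_{i}u_{0}(x))^{4}=I_{4}(g_{i,x}^{\otimes 4})+6\sigma_{i}^{2}\,I_{2}(g_{i,x}^{\otimes 2})+3\sigma_{i}^{4},
\end{equation*}
so that summing over $i$ yields the chaos decomposition
\begin{equation*}
V_{N,x}(u_{0})=I_{4}\Bigl(\sum_{i=0}^{N-1}g_{i,x}^{\otimes 4}\Bigr)+6\,I_{2}\Bigl(\sum_{i=0}^{N-1}\sigma_{i}^{2}\,g_{i,x}^{\otimes 2}\Bigr)+3\sum_{i=0}^{N-1}\sigma_{i}^{4}.
\end{equation*}

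Next, I would treat the three summands separately. The deterministic part is the mean, so its analysis reduces to the first estimate in \eqref{a1}: $\sigma_{i}^{2}=\sqrt{2}/\sqrt{\pi\theta N}+O(N^{-2})$ gives $\sigma_{i}^{4}=2/(\pi\theta N)+O(N^{-5/2})$, hence $3\sum_{i}\sigma_{i}^{4}\to 6/(\pi\theta)$ with bias $O(N^{-3/2})$. For the two chaotic parts, I would apply the isometry of multiple integrals:
\begin{equation*}
\mathbb{E}\bigl[I_{4}(\cdot)^{2}\bigr]=4!\sum_{i,j}\langle g_{i,x},g_{j,x}\rangle^{4},\quad \mathbb{E}\bigl[I_{2}(\cdot)^{2}\bigr]=2!\sum_{i,j}\sigma_{i}^{2}\sigma_{j}^{2}\langle g_{i,x},g_{j,x}\rangle^{2},
\end{equation*}
and split each double sum into its diagonal and off-diagonal parts. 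For the diagonal, $\sigma_{i}^{2}\sim CN^{-1/2}$ yields a contribution of order $N\cdot N^{-2}=N^{-1}$, while the off-diagonal part, controlled by \eqref{a5}, is of order $N^{2}\cdot N^{-8}$ for the fourth chaos and even smaller for the second, both negligible. Combining, $\mathrm{Var}(V_{N,x}(u_{0}))\leq C/N$, and adding the squared bias $O(N^{-3})$ produces the announced bound \eqref{20n-5}, which in turn gives the $L^{2}$ convergence in \eqref{22n-22}.

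For the almost sure statement, $L^{2}$ convergence alone is not enough, but the chaos decomposition automatically provides all higher moments: since the non-constant part of $V_{N,x}(u_{0})-6/(\pi\theta)$ lives in a finite sum of Wiener chaoses of fixed order, hypercontractivity on Wiener chaoses (recalled in the Appendix) upgrades the $L^{2}$ bound to $\mathbb{E}[|V_{N,x}(u_{0})-6/(\pi\theta)|^{p}]\leq C_{p}N^{-p/2}$ for every $p\geq 2$. Choosing $p=4$ makes $\sum_{N}\mathbb{E}[|V_{N,x}(u_{0})-6/(\pi\theta)|^{4}]\leq \sum_{N}C/N^{2}<\infty$, and Borel--Cantelli delivers the almost sure convergence.

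The main obstacle I anticipate is not a conceptual one but rather the careful handling of the diagonal versus off-diagonal contributions in the two chaotic norms: one must verify that the slow decay $\sigma_{i}^{2}\sim N^{-1/2}$ combined with the summation over $N$ diagonal terms does not spoil the $1/N$ rate, while simultaneously confirming that the $N^{-2}$ off-diagonal covariance bound of \eqref{a5} suffices despite the $N^{2}$ off-diagonal pairs. Everything else — the chaos expansion of $x^{4}$, the isometry formula, and the hypercontractivity step — is standard machinery from Malliavin calculus.
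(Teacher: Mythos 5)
Your proposal is correct and follows essentially the same route as the paper: the chaos decomposition $(\Delta_{i}u_{0}(x))^{4}=I_{4}(g_{i,x}^{\otimes 4})+6\|g_{i,x}\|^{2}I_{2}(g_{i,x}^{\otimes 2})+3\|g_{i,x}\|^{4}$, the isometry with a diagonal/off-diagonal split controlled by \eqref{a1} and \eqref{a5}, and hypercontractivity plus Borel--Cantelli for the almost sure convergence. The only quibble is your aside that the off-diagonal contribution to the second-chaos variance is ``even smaller'' than the fourth-chaos one; it is in fact of order $N^{2}\cdot N^{-1}\cdot N^{-4}=N^{-3}$, larger than $N^{-6}$ but still negligible against $N^{-1}$, so the conclusion is unaffected.
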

	\begin{proof}
		By (\ref{21a-1}), $\Delta _{i}u_{0}(x)=I_{1}(g_{i,x})$, so, by using the product formula for multiple stochastic integrals (\ref{prod}),
		\begin{equation}
			(\Delta _{i}u_{0}(x)) ^{4}=I_{4}\left( g_{i,x} ^{\otimes 4}\right) + 6\Vert g_{i,x} \Vert ^{2} I_{2}\left( g_{i,x} ^{\otimes 2}\right)+3\Vert g_{i,x}\Vert ^{4},\label{23s-3}
		\end{equation}
		where
		\begin{equation*}
			\Vert g_{i,x}\Vert ^{2}=\mathbb{E} \left( u_{0}(t_{i+1}, x)- u_{0} (t_{i}, x) \right) ^{2}.
		\end{equation*}
		By (\ref{23s-3}), we can write 
		\begin{equation}\label{20n-1}
			V_{N, x}(u_{0})-\frac{6}{\pi \theta} = T_{1, N}+ T_{2, N} + \mathbb{E} V_{N, x}(u_{0})-\frac{6}{\pi \theta}, 
		\end{equation}
		with
		\begin{equation}\label{t12}
			T_{1, N}=\sum_{i=0} ^{N-1} I_{4} \left( g_{i, x} ^{\otimes 4}\right), \hskip0.3cm T_{2, N} =6\sum_{i=0} ^{N-1} \Vert g_{i,x} \Vert ^{2} I_{2}\left( g_{i,x} ^{\otimes 2}\right).
		\end{equation}
		By (\ref{a1}), 
		\begin{eqnarray}
			&&	\left|  \mathbb{E} \left[V_{N, x}(u_{0})\right]-\frac{6}{\pi \theta}\right| = \left| \sum _{i=0} ^{N-1} \left( \mathbb{E}\left[\Delta _{i} u_{0} (x)\right]-\frac{6}{\pi \theta N} \right) \right|\nonumber\\
			&&\leq \sum _{i=0} ^{N-1} \left| \mathbb{E}\left[\Delta _{i} u_{0} (x)\right]-\frac{6}{\pi \theta N} \right|\leq C \frac{1}{N ^{\frac{3}{2}}}.\label{20n-2}
		\end{eqnarray}
		Then, by (\ref{a1}) and (\ref{a5}),
		\begin{eqnarray}
			\mathbb{E} T_{1,N} ^{2}&=& 4! \sum_{i, j=0} ^{N-1} \langle g_{i, x}, g_{j, x} \rangle ^{4}= 4! \left( \sum _{i=0} ^{N-1} \Vert g_{i, x}\Vert ^{8} + \sum _{i,j=0; i\not=j}^{N-1}\langle g_{i, x}, g_{j, x} \rangle ^{4}\right)\nonumber\\
			&=&4! \left[ \sum _{i=0} ^{N-1} \left( \mathbb{E}\left[ (\Delta _{i} u_{0} (x)) ^{2}\right]\right) ^{4} +  \sum _{i,j=0; i\not=j}^{N-1} \left( \mathbb{E} \left[(\Delta _{i} u_{0} (x))(\Delta _{j} u_{0} (x))\right]\right) ^{4} \right]\nonumber\\
			&= &\frac{96}{\pi ^{2} \theta ^{2}} \frac{1}{N}+ \mathcal{O} \left( N ^{-\frac{5}{2}}\right)\label{20n-3},
		\end{eqnarray}
		and
		\begin{eqnarray}
			\mathbb{E} T_{2, N} ^{2} &=&72 \sum_{i,j=0} ^{N-1} \Vert g_{i, x}\Vert ^{2} \Vert g_{j, x} \Vert ^{2} \langle g_{i, x}, g_{j, x} \rangle ^{2} \nonumber\\
			&=&72 \left( \sum _{i=0} ^{N-1} \Vert g_{i, x} \Vert ^{8} + \sum_{i,j=0; i\not=j}^{N-1}  \Vert g_{i, x}\Vert ^{2} \Vert g_{j, x} \Vert ^{2} \langle g_{i, x}, g_{j, x} \rangle ^{2}\right) \nonumber\\
			&=& 72  \left[ \sum _{i=0} ^{N-1} \left( \mathbb{E} \left[(\Delta _{i} u_{0} (x)) ^{2}\right]\right) ^{4} \right.\nonumber\\
			&&\left. +  \sum_{i,j=0; i\not=j}^{N-1} \mathbb{E} \left[(\Delta _{i} u_{0} (x)) ^{2}\right] \mathbb{E} \left[(\Delta _{j} u_{0} (x)) ^{2}\right] \left( \mathbb{E} \left[(\Delta _{i} u_{0} (x))(\Delta _{j} u_{0} (x))\right]\right) ^{2} \right]\nonumber\\
			&= &\frac{288}{\pi ^{2} \theta ^{2}}\frac{1}{N}+\mathcal{O} \left( N ^{-\frac{5}{2}}\right).\label{20n-4}
		\end{eqnarray}
		By (\ref{20n-1}), (\ref{20n-2}), (\ref{20n-3}) and (\ref{20n-4}),
		\begin{equation*}
			\mathbb{E} \left[\left| V_{N, x} (u_{0})- \frac{ 6}{\pi \theta}\right| ^{2}\right] = \mathbb{E}\left[T_{1, N}^{2}\right]+ \mathbb{E} \left[T_{2, N} ^{2}\right] + \left| \mathbb{E} \left[V_{N, x} (u_{0})\right]- \frac{ 6}{\pi \theta}\right|^{2} \leq C\frac{1}{N}.
		\end{equation*}
		This shows the inequality (\ref{20n-5}) and in particular, the $ L^{2}(\Omega)$ convergence of $ V_{N, x}(u_{0})$. By the hypercontractivity property \eqref{hyper}, for all $p\geq 1$,
		\begin{equation*}
			\mathbb{E} \left[\left| V_{N, x} (u_{0})- \frac{ 6}{\pi \theta}\right| ^{p}\right] \leq C N ^{-\frac{p}{2}}.
		\end{equation*}
		Concerning the almost sure convergence, we have for $\gamma \in \left(0, \frac{1}{2}\right)$ and $p\geq 1$,
		\begin{eqnarray*}
			&&	\sum _{N\geq 1} \mathbb{P} \left( \left| V_{N, x} (u_{0})- \frac{ 6}{\pi \theta}\right| >N ^{-\gamma}\right) \leq\sum _{N\geq 1} N ^{\gamma p} 	\mathbb{E} \left[\left| V_{N, x} (u_{0})- \frac{ 6}{\pi \theta}\right| ^{p}\right] \\
			&&\leq C \sum _{N\geq 1}N ^{p(\gamma -\frac{1}{2})},
		\end{eqnarray*}
		and the last series is finite for $p$ large enough. The Borel-Cantelli lemma implies the almost sure convergence of $ V_{N, x}(u_{0}) $ to $\frac{6}{\pi \theta}$.
	\end{proof}
	
	For $N\geq 1$ and $x\in \mathbb{R}$, set
	\begin{equation}\label{unx2}
		U_{N, x} (u_{0})= 	\sqrt{N}\left( V_{N, x} (u_{0}) - \frac{6}{\pi \theta} \right).
	\end{equation}
	We know that the above sequence satisfies a CLT (see e.g., Proposition 8.5 in \cite{T3}). In addition, we will get its  speed of convergence to the normal distribution under the Wasserstein distance.  Let us recall the definition of the Wasserstein distance. 	Set 
	$$\mathcal{A}= \{ h: \mathbb{R}^{n}\to \mathbb{R},  h \mbox{ is Lipschitz continuous with } \Vert h\Vert _{Lip}\leq 1 \},$$
	and let $F,G$ be two $n$-dimensional random vectors  such that $F, G\in L ^{1}(\Omega)$. 
	Then the Wasserstein distance between the probability distributions of $F$ and $G$ is defined by 
	\begin{equation}
		\label{dw}
		d_{W}(\mathbb{P}_{F}, \mathbb{P}_{G})=\sup_{h\in \mathcal{A}} \left| \mathbb{E}\left[h(F)\right]- \mathbb{E} \left[h(G)\right]\right|.
	\end{equation}
	We denoted by $\Vert h\Vert_{Lip} $  the Lipschitz constant of $h$ given by 
	\begin{equation*}
		\Vert h\Vert _{Lip}= \sup_{x,y\in \mathbb{R} ^{n}, x\not=y} \frac{ \vert h(x)-h(y)\vert}{\Vert x-y\Vert_{\mathbb{R} ^{n}}},
	\end{equation*} 
	with $\Vert \cdot \Vert_{\mathbb{R} ^{n}}$ the Euclidean norm in $\mathbb{R} ^{n}$.  For further purposes, let us recall that the Kolmogorov distance between the laws of $F$ and $G$ is given by 
	\begin{equation}\label{dkol}
		d_{K} (\mathbb{P}_{F}, \mathbb{P}_{G}):= \sup_{z\in \mathbb{R}^n} \left| \mathbb{P}(F\leq z)- \mathbb{P}( G\leq z)\right|.
	\end{equation}
	If $Z\sim N(0,1)$, then the Kolmogorov distance is dominated by the Wasserstein distance in the following sense (see e.g., \cite{NP-book}, Appendix C, see also \cite{Gaunt} for a deep study of the relation between these distances)
	\begin{equation}\label{20i-1}
		d_{K} (\mathbb{P}_{F}, \mathbb{P}_{Z}) \leq 2 \sqrt{ d_{W}(\mathbb{P}_{F}, \mathbb{P}_{Z})}.
	\end{equation}
	
	We use the classical Stein-Malliavin bound obtained in \cite{NP1}. 
	That is, if $F \in\mathbb{D} ^{1, 2} $ is a centered random variable and $Z\sim N(0, \sigma ^{2})$, then 
	\begin{equation}\label{sm1}
		d_{W} \left( \mathbb{P}_{F}, \mathbb{P}_{Z}\right)\leq C\left[ \sqrt{ \vert \mathbb{E}\left[F^{2}\right]-\sigma ^{2} \vert }+  \mathbb{E} \left[\left| \sigma ^{2}-\langle DF, D(-L) ^{-1} F\rangle \right|\right]\right].  
	\end{equation}
	We have the following result.
	\begin{theorem}\label{tt11}
		Consider the sequence $(U_{N, x} (u_{0}),N\geq 1)$ defined by (\ref{unx2}). Let $Z\sim N(0, \sigma _{\theta}^{2})$, with 
		\begin{equation}\label{s2}
			\sigma _{\theta} ^{2}:= \frac{384}{\pi ^{2} \theta ^{2}}.
		\end{equation}
		Then there exists a constant $C$ such that
		\begin{equation}\label{10m-1}
			d_{W} \left( \mathbb{P}_{U_{N, x}(u_{0})}, \mathbb{P}_{Z}\right) \leq C\frac{1}{\sqrt{N}}, \quad N\geq 1.
		\end{equation}
		In particular, 
		\begin{equation*}
			U_{N, x} (u_{0}) \overset{(d)}{\rightarrow} Z\sim N(0, \sigma _{\theta} ^{2}).
		\end{equation*}
	\end{theorem}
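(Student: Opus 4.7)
The strategy is to apply the classical Stein-Malliavin bound \eqref{sm1} to a centered proxy of $U_{N,x}(u_0)$. From the decomposition established in the proof of \Cref{pp11},
\[
V_{N,x}(u_0) - \frac{6}{\pi \theta} = T_{1,N} + T_{2,N} + \mu_N,
\]
with $T_{1,N}=\sum_{i=0}^{N-1} I_4(g_{i,x}^{\otimes 4})$ living in the fourth Wiener chaos, $T_{2,N}=6\sum_{i=0}^{N-1}\|g_{i,x}\|^2 I_2(g_{i,x}^{\otimes 2})$ in the second, and $\mu_N:=\mathbb{E}[V_{N,x}(u_0)]-6/(\pi\theta)$ satisfying $|\mu_N|\leq C N^{-3/2}$ by \eqref{20n-2}. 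Define $F_N:=\sqrt{N}(T_{1,N}+T_{2,N})$ so that $U_{N,x}(u_0)=F_N+\sqrt{N}\mu_N$; since $\sqrt{N}\mu_N$ is deterministic and of order $N^{-1}$, the triangle inequality for $d_W$ reduces the problem to bounding $d_W(\mathbb{P}_{F_N},\mathbb{P}_Z)$ via \eqref{sm1} applied to the centered $F_N$.

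For the first term in \eqref{sm1}, the orthogonality of the second and fourth Wiener chaoses yields $\mathbb{E}[F_N^2]=N(\mathbb{E}[T_{1,N}^2]+\mathbb{E}[T_{2,N}^2])$, and feeding in \eqref{20n-3} and \eqref{20n-4} gives $|\mathbb{E}[F_N^2]-\sigma_\theta^2|\leq C N^{-3/2}$, hence $\sqrt{|\mathbb{E}[F_N^2]-\sigma_\theta^2|}=\mathcal{O}(N^{-3/4})$, well below the target rate. For the second term, we use that $(-L)^{-1}$ acts as multiplication by $1/q$ on the $q$-th chaos to write
\[
\langle DF_N, D(-L)^{-1}F_N\rangle = \tfrac{N}{4}\|DT_{1,N}\|^2 + \tfrac{N}{2}\|DT_{2,N}\|^2 + \tfrac{3N}{4}\langle DT_{1,N}, DT_{2,N}\rangle.
\]
Using $\mathbb{E}\|DI_q(f)\|^2 = q\,\mathbb{E}[I_q(f)^2]$ together with chaos orthogonality, the mean of this quantity equals $\sigma_\theta^2+\mathcal{O}(N^{-3/2})$, so the remaining task is to estimate the $L^1$-deviation of each summand from its mean.

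To this end, the variances of $\|DT_{j,N}\|^2$ and of the cross product $\langle DT_{1,N}, DT_{2,N}\rangle$ are expanded via the product formula \eqref{prod}, which rewrites these quadratic expressions as finite sums of multiple Wiener integrals indexed by contractions $g_{i,x}^{\otimes p}\otimes_r g_{j,x}^{\otimes q}$. Because the kernels are pure tensor powers, each contraction factors as a product of powers of $\|g_{i,x}\|^2$ and of inner products $\langle g_{i,x},g_{j,x}\rangle$, which are controlled sharply by \eqref{a1} and \eqref{a5}, namely $\|g_{i,x}\|^2\asymp N^{-1/2}$ and $|\langle g_{i,x},g_{j,x}\rangle|\leq C N^{-2}$ for $i\neq j$. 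The main technical obstacle is the careful bookkeeping of all these terms, in particular for the cross-chaos piece $\langle DT_{1,N}, DT_{2,N}\rangle$, and verifying that the dominant contribution comes from the diagonal $i=j$: after the $N^2$ prefactor and summation over $i$ of terms of size $\|g_{i,x}\|^{8}\asymp N^{-4}$, one obtains a variance of order $N^{-1}$. Taking square roots and inserting into \eqref{sm1} then yields $d_W(\mathbb{P}_{F_N},\mathbb{P}_Z)\leq C N^{-1/2}$, which combined with the triangle inequality proves \eqref{10m-1}. The convergence in distribution is then immediate since $d_W$ metrizes weak convergence along $L^1$-bounded sequences.
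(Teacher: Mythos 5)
Your proposal is correct and follows essentially the same route as the paper: centering, the Stein--Malliavin bound \eqref{sm1}, the chaos decomposition into $T_{1,N}$ and $T_{2,N}$, and control of $\langle DF_N, D(-L)^{-1}F_N\rangle$ through the estimates \eqref{a1} and \eqref{a5}, with the diagonal terms dominating at order $N^{-1}$ in variance (your grouping by chaos projections is just a reorganization of the paper's sums $S_{1,N}, S_{2,N}, S_{3,N}$). The only slip is notational: the diagonal terms are of size $\Vert g_{i,x}\Vert^{16}\asymp N^{-4}$ (not $\Vert g_{i,x}\Vert^{8}$), which is what makes $N^{2}\cdot N\cdot N^{-4}=N^{-1}$ come out right.
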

	\begin{proof}
		We will use the Stein-Malliavin bound (\ref{sm1}). 
		We first need to center the random variable $U_{N, x}(u_{0})$, so, for $N\geq 1$ and $x\in \mathbb{R}$, let 
		\begin{equation}\label{23n-1}
			\bar{U}_{N, x}(u_{0})= U_{N, x}(u_{0}) - \mathbb{E} \left[U_{N, x}(u_{0})\right].
		\end{equation}
		By the triangle's inequality, 
		\begin{eqnarray*}
			&&d_{W}\left( \mathbb{P}_{ U_{N,x}(u_{0})}, \mathbb{P}_{Z}\right) \leq 	d_{W}\left( \mathbb{P}_{\bar{ U}_{N,x}(u_{0})}, \mathbb{P}_{Z}\right) + d_{W} \left( \mathbb{P}_{ U_{N,x}(u_{0})}, \mathbb{P}_{\bar{ U}_{N,x}(u_{0})}\right)\\
			&&\leq  d_{W}\left( \mathbb{P}_{\bar{ U}_{N,x}(u_{0})}, \mathbb{P}_{Z}\right) + \mathbb{E}\left[\vert  U_{N, x}(u_{0})- \bar{U}_{N, x}(u_{0})\vert\right] \\
			&& = d_{W}\left( \mathbb{P}_{\bar{ U}_{N,x}(u_{0})}, \mathbb{P}_{Z}\right) + \vert \mathbb{E}\left[U_{N, x}(u_{0})\right]\vert.  
		\end{eqnarray*}
		We use (\ref{20n-2}) to get the bound 
		\begin{equation}\label{23n-2}
			\vert \mathbb{E} \left[U_{N, x}(u_{0})\right]\vert = \sqrt{N} \left\vert \mathbb{E} \left[V_{N, x}(u_{0})\right]-\frac{6}{\pi \theta}\right\vert \leq C \frac{1}{N},
		\end{equation}
		and this leads to 
		\begin{equation}\label{23n-3}
			d_{W}\left( P_{ U_{N,x}(u_{0})}, P_{Z}\right) \leq d_{W}\left( P_{\bar{ U}_{N,x}(u_{0})}, P_{Z}\right) + C \frac{1}{N}.
		\end{equation}
		We now estimate  $d_{W}\left( \mathbb{P}_{\bar{ U}_{N,x}(u_{0})}, \mathbb{P}_{Z}\right)$. 
		By \eqref{sm1} we can write 
		\begin{eqnarray}
			&&	d_{W}\left( \mathbb{P}_{\bar{ U}_{N,x}(u_{0})}, \mathbb{P}_{Z}\right) \leq C\left[ \sqrt{ \mathbb{E} \left[\bar{U}_{N, x}(u_{0})^{2}\right]-\sigma _{\theta} ^{2}}\right.\nonumber\\
			&&\left. + 	\mathbb{E}\left[\left| \langle  D(-L) ^{-1} \bar{U}_{N, x}(u_{0}),  D \bar{U}_{N, x}(u_{0})\rangle -\mathbb{E}\langle  D(-L) ^{-1} \bar{U}_{N, x}(u_{0}),  D \bar{U}_{N, x}(u_{0})\rangle \right|\right]\vphantom{\int}\right].\label{22n-10}
		\end{eqnarray}
		
		First we notice that
		\begin{equation*}
			\mathbb{E} \left[\bar{U}_{N, x}(u_{0}) ^{2}\right]= \mathbb{E}\left[U_{N,x}(u_{0}) ^{2}\right] - ( \mathbb{E}\left[ U_{N, x}(u_{0})\right])^{2} =N( \mathbb{E} \left[V_{N, x}(u_{0})^{2}\right]- (\mathbb{E}\left[V_{N, x}(u_{0})\right])^{2}).
		\end{equation*}
		By the decomposition (\ref{20n-1}), we have, with $ T_{1, N}, T_{2, N}$ given by (\ref{t12}),
		\begin{equation*}
			\mathbb{E} \left[V_{N, x} (u_{0}) ^{2}\right] -( \mathbb{E} \left[V_{N, x}(u_{0})\right]) ^{2} = \mathbb{E} \left[T_{1, N}^{2}\right] + \mathbb{E} \left[T_{2, N}^{2}\right] =\sigma _{\theta } ^{2} \frac{1}{N}+ \mathcal{O} \left( N ^{-\frac{5}{2}}\right).
		\end{equation*}
		Consequently,
		\begin{equation}\label{22n-11}
			\left| \mathbb{E} \left[\bar{U}_{N, x}(u_{0})^{2}\right]-\sigma _{\theta} ^{2}\right| = N \left| 	\mathbb{E} \left[V_{N, x} (u_{0}) ^{2}\right] -( \mathbb{E} \left[V_{N, x}(u_{0})\right]) ^{2} \right|= \mathcal{O} \left( N ^{-\frac{3}{2}}\right).
		\end{equation}
		Let us now evaluate the quantity
		\begin{equation*}
			\mathbb{E}\left[\left| \langle  D(-L) ^{-1} \bar{U}_{N, x}(u_{0}),  D \bar{U}_{N, x}(u_{0})\rangle -\mathbb{E}\langle  D(-L) ^{-1} \bar{U}_{N, x}(u_{0}),  D \bar{U}_{N, x}(u_{0})\rangle \right|^{2}\right].
		\end{equation*}
		By (\ref{23s-3}), for every $N\geq 1$ and $x\in \mathbb{R}$,  the random variable $\bar{U}_{N,x}(u_{0})$ admits the following chaos expansion 
		\begin{equation*}
			\bar{U}_{N,x}(u_{0})=\sqrt{N} \sum_{i=0} ^{N-1} \left( I_{4} \left( g_{i,x} ^{\otimes 4}\right) + 6 \Vert g_{i,x}\Vert ^{2} I_{2} \left( g_{i,x} ^{\otimes 2}\right) \right),
		\end{equation*}
		where recall that
		\begin{equation*}
			\Vert g_{i,x}\Vert ^{2}=\mathbb{E} \left( u_{0}(t_{i+1}, x)- u_{0} (t_{i}, x) \right) ^{2}.
		\end{equation*}
		This gives
		\begin{equation*}
			D\bar{U}_{N, x}(u_{0})= \sqrt{N}\sum_{i=0} ^{N-1} \left( 4 I_{3}\left( g_{i,x} ^{\otimes 3}\right) + 2\Vert g_{i,x}\Vert ^{2} I_{1}(g_{i,x})\right) g_{i,x},  
		\end{equation*}
		and
		\begin{equation}\label{21a-5}
			D(-L) ^{-1} \bar{U}_{N, x}(u_{0})=\sqrt{N}\sum_{i=0} ^{N-1} \left(  I_{3}\left( g_{i,x} ^{\otimes 3}\right) + \Vert g_{i,x}\Vert ^{2} I_{1}(g_{i,x})\right) g_{i,x}.  
		\end{equation}
		Therefore
		\begin{eqnarray*}
			&&	\langle  D(-L) ^{-1} \bar{U}_{N, x}(u_{0}),  D \bar{U}_{N, x}(u_{0})\rangle \\
			&=&N \sum _{i, j=0} ^{N-1} \left( 4 I_{3}\left( g_{i,x} ^{\otimes 3}\right) + 2\Vert g_{i,x}\Vert ^{2} I_{1}(g_{i,x})\right) \left(  I_{3}\left( g_{j,x} ^{\otimes 3}\right) + \Vert g_{j,x}\Vert ^{2} I_{1}(g_{j,x})\right)\langle g_{i, x} , g_{j, x} \rangle \\
			&=& 4 S_{1, N} + 3 S_{2, N} + 2 S_{3, N},
		\end{eqnarray*}
		where 
		\begin{eqnarray}
			&&	S_{1, N}= N \sum_{i,j=0} ^{N-1} I_{3}\left( g_{i,x} ^{\otimes 3}\right)I_{3}\left( g_{j,x} ^{\otimes 3}\right)\langle g_{i, x} , g_{j, x} \rangle ,\nonumber\\
			&&S_{2, N}= N \sum_{i,j=0} ^{N-1} I_{3}\left( g_{i,x} ^{\otimes 3}\right) I_{1}(g_{j,x})\Vert g_{j, x}\Vert ^{2} \langle g_{i, x} , g_{j, x} \rangle,\nonumber\\
			&& S_{3,N}=N \sum_{i,j=0} ^{N-1} I_{1}(g_{i,x})I_{1}(g_{j,x})\Vert g_{i, x} \Vert ^{2})\Vert g_{j, x} \Vert ^{2} \langle g_{i, x} , g_{j, x} \rangle, \quad N\geq 1.\label{si}
		\end{eqnarray}
		So,
		\begin{eqnarray}
			&&\langle  D(-L) ^{-1} \bar{U}_{N, x}(u_{0}),  D \bar{U}_{N, x}(u_{0})\rangle -\mathbb{E}\langle  D(-L) ^{-1} \bar{U}_{N, x}(u_{0}),  D \bar{U}_{N, x}(u_{0})\rangle \nonumber\\
			&=& 4 (S_{1, N}- \mathbb{E} S_{1, N})+ 3(S_{2, N}- \mathbb{E} S_{2, N}) + 2 (S_{3, N}- \mathbb{E} S_{3, N}).\label{22n-3}
		\end{eqnarray}
		Let us deal with $S_{1, N}- \mathbb{E} S_{1, N}$. We write 
		\begin{eqnarray*}
			S_{1, N} &=& N \sum _{i=0} ^{N-1} I_{3} (g_{i, x}^{\otimes 3}) ^{2}\Vert g_{i, x}\Vert ^{2} + N \sum_{i,j=0; i\not =j}^{N-1}  I_{3} (g_{i, x}^{\otimes 3}) I_{3} (g_{j, x}^{\otimes 3})\langle g_{i, x}, g_{j, x} \rangle\\
			&=:& S_{1, N} ^{(1)} + S_{1, N}^{(2)}.
		\end{eqnarray*}
		By employing  again the product formula (\ref{prod}),
		\begin{eqnarray*}
			&&S_{1, N} ^{(1)} - \mathbb{E} S_{1, N} ^{(1)} \\
			&=&
			N \sum _{i=0} ^{N} \left[ I _{6} \left( g_{i, x} ^{\otimes 6}\right)\Vert g_{i, x}\Vert ^{2} + 9 I _{4} \left( g_{i, x} ^{\otimes 4}\right)\Vert g_{i, x}\Vert ^{4} +18 I _{2} \left( g_{i, x} ^{\otimes 2}\right)\Vert g_{i, x}\Vert ^{6} \right]. 
		\end{eqnarray*}
		Thus, by using the estimate (\ref{a1}),
		\begin{eqnarray}
			\mathbb{E} \left[\vert S_{1, N} ^{(1)} - \mathbb{E} S_{1, N} ^{(1)}\vert ^{2}\right]
			&&\leq C N ^{2}  \sum_{i,j=0} ^{N-1}\left[  \langle g_{i,x}, g_{j, x} \rangle ^{6}  \Vert g_{i, x} \Vert ^{2}\Vert g_{j, x} \Vert ^{2}\right.\nonumber
			\\
			&&\left. \quad +  \langle g_{i,x}, g_{j, x} \rangle ^{4}\Vert g_{i, x} \Vert ^{4}\Vert g_{j, x} \Vert ^{4}+ \langle g_{i,x}, g_{j, x} \rangle ^{2}\Vert g_{i, x} \Vert ^{6}\Vert g_{j, x} \Vert ^{6}\right]\nonumber\\
			&&\leq  C N^{2} \sum_{i=0} ^{N-1} \Vert g_{i, x} \Vert ^{16} \leq CN^{2} \sum_{i=0} ^{N-1} \left( \frac{1}{\sqrt{N}}\right) ^{8} \leq C\frac{1}{N}.\label{22n-1}
		\end{eqnarray}
		By (\ref{a1}), (\ref{a5}) and the Cauchy-Schwarz's inequality,
		\begin{eqnarray}
			\mathbb{E}\left[\vert S_{1, N}^{(2)}\vert\right] &\leq & N  \sum_{i,j=0; i\not =j}^{N-1} \left( \mathbb{E}  I_{3} (g_{i, x}^{\otimes 3}) ^{2}\right) ^{\frac{1}{2}}\left( \mathbb{E}  I_{3} (g_{j, x}^{\otimes 3}) ^{2}\right) ^{\frac{1}{2}}\vert \langle g_{i, x}, g_{j, x} \rangle\vert \nonumber\\
			&\leq & CN \sum_{i,j=0; i\not =j}^{N-1}\Vert g_{i,x} \Vert ^{3}
			\Vert g_{j,x} \Vert ^{3}\vert \langle g_{i, x}, g_{j, x} \rangle\vert \nonumber \\
			&\leq & CN \sum_{i,j=0; i\not =j}^{N-1} \frac{1}{N ^{\frac{3}{4}}} \frac{1}{N ^{\frac{3}{4}}}\frac{1}{N ^{2}} \leq  C \frac{1}{\sqrt{N}}.\label{22n-2}
		\end{eqnarray}
		We get, by (\ref{22n-1}) and (\ref{22n-2}), 
		\begin{equation}\label{22n-4}
			\mathbb{E} \left[\vert S_{1, N}- \mathbb{E}S_{1, N} \vert \right] \leq  C \frac{1}{\sqrt{N}}.
		\end{equation}
		We proceed in a similar way with the summand $S_{2, N}$. We have
		\begin{eqnarray*}
			S_{2, N}&=& N\sum _{i=0} ^{N-1} I_{3} (g_{i, x}^{\otimes 3})I_{1}(g_{i, x}) \Vert g_{i, x}\Vert ^{4} \\
			&&+ N \sum_{i,j=0; i\not=j}^{N-1} I_{3} (g_{i, x}^{\otimes 3})I_{1}(g_{j, x})\Vert g_{j, x}\Vert ^{2} \langle g_{i, x}, g_{j,x}\rangle \\
			&=& S_{2, N}^{(1)} + S_{2, N} ^{(2)}.
		\end{eqnarray*}
		We estimate $S_{2, N} ^{(2)}$ by (\ref{a1}) and (\ref{a5}), 
		\begin{eqnarray*}
			\mathbb{E}\left[\vert S_{2, N} ^{(2)}\vert\right] &\leq & N \sum_{i,j=0; i\not=j}^{N-1} \left( \mathbb{E}I_{3} (g_{i, x}^{\otimes 3})^{2} \right) ^{\frac{1}{2}} \left( \mathbb{E} I_{1}(g_{i, x})^{2}\right) ^{\frac{1}{2}} \Vert g_{j, x}\Vert ^{2}\vert \langle g_{i,x},g_{j,x}\rangle \vert \\
			&&\leq CN \sum_{i,j=0; i\not=j}^{N-1} \Vert g_{i,x}\Vert ^{3} \Vert g_{j,x}\Vert ^{3} \vert \langle g_{i,x},g_{j,x}\rangle \vert\leq  C \frac{1}{\sqrt{N}}.
		\end{eqnarray*}
		Concerning $ S_{2, N} ^{(1)}$, we first decompose it in chaos by writting 
		\begin{equation*}
			S_{2, N} ^{(1)}- \mathbb{E} S_{2, N} ^{(1)}= N \sum_{i=0} ^{N-1} \left[ I_{4}(g_{i, x}^{\otimes 4}) +3 \Vert g_{i, x}\Vert ^{2} I_{2}(g_{i,x}^{\otimes 2}) \right] \Vert g_{i, x}\Vert ^{4}
		\end{equation*}
		and then, for $N$ large,  
		\begin{equation*}
			\mathbb{E} \left[\left| S_{2, N} ^{(1)}- \mathbb{E} S_{2, N} ^{(1)}\right| ^{2}\right] \leq C N ^{2} \sum_{i=0} ^{N-1}\Vert g_{i, x} \Vert ^{16}\leq C\frac{1}{N}.
		\end{equation*}
		We obtain 
		\begin{equation}\label{22n-5}
			\mathbb{E} \left[\vert S_{2, N}  - \mathbb{E} S_{2, N} \vert\right]\leq C\frac{1}{\sqrt{N}}.
		\end{equation}
		Finally
		\begin{eqnarray*}
			S_{3, N}&=& N\sum _{i=0} ^{N-1} I_{1}(g_{i, x}) ^{2} \Vert g_{i, x}\Vert ^{6} +N \sum_{i,j=0; i\not=j}^{N-1} I_{1}(g_{i,x})I_{1}(g_{j,x})\Vert g_{i,x} \Vert ^{2} \Vert g_{j,x}\Vert ^{2} \langle g_{i,x}, g_{j,x}\rangle \\
			&=& S_{3, N}^{(1)}+ S_{3, N} ^{(2)},
		\end{eqnarray*}
		and as above, for $N\geq 1$
		\begin{equation*}
			\mathbb{E} \left[\vert S_{3, N} ^{(2)}\vert \right] \leq CN  \sum_{i,j=0; i\not=j}^{N-1} \Vert g_{i, x}\Vert ^{3} \Vert g_{j, x}\Vert ^{3} \vert \langle g_{i,x}, g_{j,x}\rangle \vert \leq C\frac{1}{\sqrt{N}},
		\end{equation*}
		and 
		\begin{equation*}
			\mathbb{E}\left[\vert S_{3, N} ^{(1)}-\mathbb{E} S_{3, N} ^{(1)}\vert ^{2} \right]\leq cN ^{2} \sum_{i=0}^{N-1} \Vert g_{i, x}\Vert ^{16}\leq C\frac{1}{N},
		\end{equation*}
		so
		\begin{equation}\label{22n-6}
			\mathbb{E} \left[\vert S_{3, N}-\mathbb{E} S_{3, N}\vert\right] \leq C\frac{1}{\sqrt{N}}.
		\end{equation}
		By using (\ref{22n-3}) and the inequalities (\ref{22n-4}), (\ref{22n-5}) and (\ref{22n-6}), we conclude that
		\begin{eqnarray}
			&&\mathbb{E} \left[\left| 	\langle  D(-L) ^{-1} \bar{U}_{N, x}(u_{0}),  D \bar{U}_{N, x}(u_{0})\rangle -\mathbb{E}\langle  D(-L) ^{-1} \bar{U}_{N, x}(u_{0}),  D \bar{U}_{N, x}(u_{0})\rangle\right|\right]\nonumber \\
			&&\leq C \left( \mathbb{E}\left[\vert S_{1, N}-\mathbb{E}\left[S_{1, N}\right]\vert\right] + \mathbb{E}\left[\vert S_{2, N}-\mathbb{E}\left[S_{2, N}\right]\vert\right]+ \mathbb{E}\left[\vert S_{3, N}-\mathbb{E}\left[S_{3, N}\right]\vert\right]\right)\nonumber \\
			&&\leq C \frac{1}{\sqrt{N}}.\label{22n-12}
		\end{eqnarray}
		By plugging (\ref{22n-11}) and (\ref{22n-12}) into (\ref{22n-10}), we obtain 
		\begin{equation}
			\label{23n-4}d_{W}\left( \mathbb{P}_{\bar{ U}_{N,x}(u_{0})}, \mathbb{P}_{Z}\right) \leq C \frac{1}{\sqrt{N}},
		\end{equation} 
		and by combining this bound with (\ref{23n-3}), we get the conclusion.
	\end{proof}
	
	We notice that the inequality (\ref{10m-1}) can also be deduced by using the relation between the solution to the linear heat equation and the fractional Brownian motion, see e.g., \cite{MahTud2}. On the other hand, certain bounds obtained in the above proof (such as (\ref{22n-12})) are needed for the study of the asymptotic independence in the next section. 
	
	\subsection{The quartic variation of the solution to the semilinear stochastic heat equation}
	The next step is to transfer the results obtained  in the previous section  to the situation when the drift  coefficient $b$ does not vanish identically. 
	That is, we consider the SPDE (\ref{1}) by assuming that $b$ satisfies the assumption (\ref{cc}). 
	From (\ref{mild}), we can write the solution to (\ref{1}) as
	\begin{equation}\label{deco}
		u(t,x)= u_{0}(t,x)+ X(t,x), \quad t>0, x\in \mathbb{R},
	\end{equation}
	where $u_{0}$ is given by (\ref{mild2}) and 
	\begin{equation}\label{y}
		X(t,x)= \int_{0} ^{t} \int_{\mathbb{R}}G(\theta (t-s), x-y)b(u(s,y)) dy ds, \mbox{ for } t\geq 0, x\in \mathbb{R}. 
	\end{equation}
	The main observation is that the random field $X$, viewed as a stochastic process with respect to its time variable, is more regular than the Gaussian part $u_{0}$. 
	Actually, we have the following estimate (see Proposition 4.3.3 in \cite{DS1}): For every $T>0$, $p\geq 1$, and $\beta \in (0,1)$, we have 
	\begin{equation}\label{20a-1}
		\mathbb{E} \left[\vert X(t,x)- X(s, x) \vert ^{p}\right] \leq C_{T, p, \beta} \vert t-s \vert ^{\beta p}, \quad s, t \in [0, T],\; x\in \mathbb{R}.
	\end{equation}
	In particular, the paths $t\to X(t,x) $ are H\"older continuous of order $\delta $, for any $\delta \in (0, \beta)$. By using the estimate (\ref{20a-1}), we deduce the asymptotic behavior of the quartic variation for the solution to the semilinear heat equation and its rate of convergence under the Wasserstein distance. 
	\begin{prop}\label{pp4}
		Let $x\in \mathbb{R}$ be fixed and consider the sequence $(V_{N,x}(u), N\geq 1)$ given by (\ref{vnx}). Then, 
		\begin{equation*}
			\lim_{N\to \infty} V_{N, x}(u) =\frac{6}{\pi \theta} \quad \mbox{ in } L ^{1}(\mathbb{P}),
		\end{equation*}
		and
		\begin{equation*}
			U_{N, x}(u)=	\sqrt{N}\left( V_{N, x}(u)- \frac{6}{\pi \theta} \right) \overset{(d)}{\rightarrow} Z\sim N (0, \sigma _{\theta}^{2}) \quad \mbox{as } N\to \infty,
		\end{equation*}
		with $ \sigma_{\theta}$ given by \eqref{s2}. 
		Moreover, for every $\beta\in (3/4,1)$ there exists a constant $C>0$ such that
		\begin{equation}\label{20i-2}
			d_{W}( \mathbb{P}_{ U_{N, x}(u)}, \mathbb{P}_{Z})\leq C \left(\frac{1}{N}\right)^{\beta-3/4}, \quad \mbox{ for all }N\geq 1.
		\end{equation}
	\end{prop}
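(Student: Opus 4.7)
The plan is to transfer the results of Proposition \ref{pp11} and Theorem \ref{tt11} for the linear heat equation to the semilinear setting, exploiting the decomposition $u = u_{0}+X$ from \eqref{deco}. The crucial observation is that, by \eqref{20a-1}, the increments $\Delta_{i}X(x):=X(t_{i+1},x)-X(t_{i},x)$ are of order $N^{-\beta}$ in $L^{p}$ for any $\beta<1$, whereas by \eqref{20a-3} the Gaussian increments $\Delta_{i}u_{0}(x)$ are only of order $N^{-1/4}$ in $L^{p}$. Consequently, the quartic variation of $u$ should essentially be that of $u_{0}$, with cross terms contributing lower-order corrections.

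Writing $\Delta_{i}u(x)=\Delta_{i}u_{0}(x)+\Delta_{i}X(x)$ and applying the binomial formula to the fourth power, I would decompose
\begin{equation*}
V_{N,x}(u)=V_{N,x}(u_{0})+R_{N,x}, \qquad R_{N,x}=\sum_{k=1}^{4}\binom{4}{k}\sum_{i=0}^{N-1}(\Delta_{i}u_{0}(x))^{4-k}(\Delta_{i}X(x))^{k}.
\end{equation*}
Each summand of $R_{N,x}$ is then controlled via the Cauchy-Schwarz inequality together with \eqref{20a-3} and \eqref{20a-1}, which yields the pointwise estimate $\mathbb{E}|(\Delta_{i}u_{0}(x))^{4-k}(\Delta_{i}X(x))^{k}| \leq C\,N^{-(4-k)/4-k\beta}$. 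Summing over the $N$ indices gives
\begin{equation*}
\mathbb{E}|R_{N,x}|\leq C\sum_{k=1}^{4}N^{k(1/4-\beta)}\leq C\,N^{1/4-\beta}
\end{equation*}
for $\beta>1/4$, the dominant term coming from $k=1$.

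For the $L^{1}$-convergence, combining this with the $L^{2}$-bound \eqref{20n-5} of Proposition \ref{pp11} (which implies $L^{1}$-convergence at rate $N^{-1/2}$) yields $\mathbb{E}|V_{N,x}(u)-6/(\pi\theta)|\leq C(N^{-1/2}+N^{1/4-\beta})$, whence the first claim holds as soon as $\beta$ is chosen $>1/4$. For the Wasserstein bound, I would combine the triangle inequality for $d_{W}$ with the standard estimate $d_{W}(\mathbb{P}_{F},\mathbb{P}_{G})\leq \mathbb{E}|F-G|$ (valid in view of the definition \eqref{dw}) to obtain
\begin{equation*}
d_{W}(\mathbb{P}_{U_{N,x}(u)},\mathbb{P}_{Z}) \leq d_{W}(\mathbb{P}_{U_{N,x}(u_{0})},\mathbb{P}_{Z})+\sqrt{N}\,\mathbb{E}|R_{N,x}|\leq C\,N^{-1/2}+C\,N^{3/4-\beta},
\end{equation*}
the first term being controlled by Theorem \ref{tt11}. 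For $\beta\in(3/4,1)$ the second term is dominant and matches the bound \eqref{20i-2}.

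The main difficulty is conceptual rather than technical: one must track the $L^{p}$-moment exponents carefully in order to identify the dominant cross term (namely $k=1$) and to see that $\beta>3/4$ is exactly the threshold at which $\sqrt{N}\,\mathbb{E}|R_{N,x}|\to 0$; the restriction $\beta<1$ imposed by \eqref{20a-1} is precisely what prevents improving the rate beyond $(1/N)^{1/4}$. No subtler Malliavin-type argument is needed here, because the additional temporal smoothness of $X$ over $u_{0}$ is sufficient to absorb the cross terms after rescaling by $\sqrt{N}$.
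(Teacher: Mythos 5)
Your proposal is correct and follows essentially the same route as the paper: decompose $u=u_{0}+X$, expand the fourth power binomially, bound the cross terms via Cauchy--Schwarz using \eqref{20a-3} and \eqref{20a-1} to get $\sqrt{N}\,\mathbb{E}|R_{N,x}|\leq C N^{3/4-\beta}$, and conclude with the triangle inequality for $d_{W}$ together with Theorem \ref{tt11}. The only difference is cosmetic (your index $k$ counts the power of the $X$-increment rather than of the $u_{0}$-increment), and your identification of the dominant term and of the threshold $\beta>3/4$ matches the paper's.
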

	\begin{proof}
		By using the decomposition (\ref{deco}) and setting
		\begin{equation}\label{pnx}
			P_{N,x} := \sum_{k=0}^{3} C_{4}^{k} \sum _{i=0} ^{N-1} \left( u_{0} (t_{i+1}, x) -u_{0}(t_{i}, x)\right) ^{k} \left( X(t_{i+1}, x) -X(t_{i}, x) \right) ^{4-k}, \quad N\geq 1,
		\end{equation}
		we can write 
		\begin{equation*}
			V_{N, x}(u)= V_{N, x}(u_{0}) + P_{N, x},
		\end{equation*}
		and
		\begin{equation*}
			\sqrt{N} \left( V_{N, x}(u)- \frac{6}{\pi \theta} \right) =	\sqrt{N} \left( V_{N, x}(u_{0})- \frac{6}{\pi \theta} \right) +\sqrt{N} P_{N,x}.
		\end{equation*}
		By using (\ref{20a-3}), (\ref{20a-1}) and the Cauchy-Schwarz's inequality, we get 
		\begin{eqnarray*}
			\mathbb{E} \left[\vert P_{N, x} \vert\right] &\leq &  \sum_{k=0}^{3} C_{4}^{k} \sum _{i=0} ^{N-1} \left( \mathbb{E} \left[\left( u_{0} (t_{i+1}, x) -u_{0}(t_{i}, x)\right) ^{2k}\right]\right) ^{\frac{1}{2}}
			\left(  \mathbb{E}  \left[\left( X(t_{i+1}, x) -X(t_{i}, x) \right) ^{2(4-k)}\right]\right) ^{\frac{1}{2}}\\
			&\leq & C \sum_{k=0}^{3} C_{4}^{k} \sum _{i=0} ^{N-1} \left( \frac{1}{N}\right) ^{\frac{k}{4}}  \left( \frac{1}{N}\right)  ^{(4-k) \beta}
			\leq C \sum_{k=0}^{3} C_{4}^{k} \left( \frac{1}{N}\right) ^{\frac{k}{4}+ (4-k) \beta -1}\\
			&=&C \sum_{k=0}^{3} C_{4}^{k} \left( \frac{1}{N}\right) ^{k\left(\frac{1}{4}-\beta\right)+ 4\beta -1}
			\leq C \sum_{k=0}^{3} C_{4}^{k} \left( \frac{1}{N}\right) ^{\beta-1/4}, \quad \mbox{for } \beta>1/4.
		\end{eqnarray*}
		Thus 
		\begin{equation}\label{24s-1}
			\sqrt{N} \mathbb{E} \left[\vert P_{N, x}\vert\right] \leq C \left( \frac{1}{N}\right) ^{\beta-\frac{3}{4}}, \quad \mbox{for } \beta>1/4.
		\end{equation}
		Furthermore, since
		\begin{equation*}
			U_{N,x}(u)= U_{N, x}(u_{0})+ \sqrt{N} P_{N,x},
		\end{equation*}
		we can write, by (\ref{24s-1}) and  Theorem \ref{tt11},
		\begin{eqnarray*}
			d_{W}( \mathbb{P}_{ U_{N, x}(u)}, \mathbb{P}_{Z})&\leq & 	d_{W}( \mathbb{P}_{ U_{N, x}(u_{0})}, \mathbb{P}_{Z})+ d_{W} ( \mathbb{P}_{ U_{N, x}(u)}, \mathbb{P}_{ U_{N, x}(u_{0})})\\
			&\leq & d_{W}( \mathbb{P}_{ U_{N, x}(u_{0})}, \mathbb{P}_{Z}) +\sqrt{N} \mathbb{E} \left[\vert P_{N, x}\vert\right]\\
			& \leq &C \left( \frac{1}{\sqrt{N}}+  \left( \frac{1}{N}\right) ^{\beta-\frac{3}{4}}\right)\leq C \left( \frac{1}{N}\right) ^{\beta-\frac{3}{4}}, \quad \mbox{for } \beta>3/4.
		\end{eqnarray*}
	\end{proof}
	
	\section{Asymptotic independence for the quartic variation}
	In a second step of our work, we study the asymptotic independence between the quatic variation sequence (\ref{19a-3})  and the data (\ref{ynx}), as described in Section \ref{sec21}. Again, we start our analysis with the case of the linear stochastic heat equation. 
	
	\subsection{The linear stochastic heat equation}
	In this part, we analyze the dependency between the random sequence $U_{N, x}(u_{0})$ defined by (\ref{unx2}) and the data that compose this estimator. More exactly, for $x\in \mathbb{R}$ fixed, let us define the observation vector $\mathbf{Y}_{N,x}(u_{0})$ given by 
	\begin{equation}
		\label{ynx2}
		\mathbf{Y}_{N,x} (u_{0})=\left( u_{0} (t_{i}, x), i\in J_{N}\right),
	\end{equation}
	where  $J_{N}$ is a subset of $\{1,2,...,N\}$ which satisfies 
	\begin{equation}\label{hipj}
		1\leq card(J_{N}):=m(N)\leq N.
	\end{equation}
	So, $ \mathbf{Y}_{ N, x}(u_{0})$ contains a part of the data $ (u_{0} (t_{i}, x), i=1,..., N)$ from which the estimator is defined. It may contain all the data when $ J_{N}= \{1,2,..., N\}$.  
	The analysis of the  dependency  between the quartic variation $U_{N, x}(u_{0})$  and the observations contained in $\mathbf{Y}_{N, x}(u_{0})$ will be done by comparing the following probability distributions:
	\begin{equation*}
		\mathbb{P}_{( U_{N, x}( u_{0}), \mathbf{Y}_{N, x}(u_{0}))}\mbox{ and } \mathbb{P}_{Z}\otimes \mathbb{P}_{ \mathbf{Y}_{N, x}(u_{0})}.
	\end{equation*}
	where $Z\sim N(0, \sigma _{\theta } ^{2})$  (which represents the limit distribution of $U_{N, x}( u_{0})$, see Proposition \ref{pp4}). 
	When these two distributions are close  in some sense, for $N$ large, i.e., the joint law of $( U_{N, x}( u_{0}), \mathbf{Y}_{N, x}(u_{0}))$ behaves as the product of its marginals, we say that $U_{N, x}(u_{0})$ and $ \mathbf{Y}_{N, x}(u_{0})$ are asymptotically independent. 
	This will depend, in particular, on the number of the observations contained in the vector $ \mathbf{Y}_{N, x}(u_{0})$. 
	
	Let us introduce a first notion of asymptotic independence. 
	Some (weaker) versions of asymptotic independence will be discussed in \Cref{sec4}.
	
	\begin{definition}
		\label{de1} 
		Let $ (X_{N}, N\geq 1)$ and $(\mathbf{Y}_{N}, N\geq 1)$ be two sequences of random variables with values in $\mathbb{R}$ and $\mathbb{R} ^{m(N)}$, respectively. Assume $ X_{N}\overset{(d)}{\rightarrow}U$, where $U$ is an arbitrary random variable.  
		We say that these two sequences are $W$-asymptotically independent if
		\begin{equation*}
			\lim_{N\to \infty}d_{W} \left( \mathbb{P}_{ (X_{N}, \mathbf{Y}_{N})}, \mathbb{P}_{U}\otimes \mathbb{P}_{ \mathbf{Y}_{N}}\right) =0. 
		\end{equation*}
	\end{definition}
	
	As a matter of fact, we provide quantitative estimates for the above Wasserstein distance, our basic tool being the multidimensional Stein-Malliavin bound proved in \cite{T2}, which generalizes the classical bound (\ref{sm1}). 
	For the reader convenience, we restate this result:
	\begin{theorem}\label{tt1}
		Let $X$ be a  centered random variable in $\mathbb{D} ^{1,2}$ and let $\mathbf{Y}=(Y_{1},...,Y_{d})$ be such that $ Y_{j}\in \mathbb{D} ^{1,2}$ for all $j=1,...,d$. Let  $ Z\sim N(0,\sigma ^{2})$. Then
		\begin{equation}\label{ineq1}
			d_{W} \left( \mathbb{P}_{ (X, \mathbf{Y})},  \mathbb{P}_{Z}\otimes \mathbb{P} _{\mathbf{Y}}\right) \leq C \left( \mathbb{E} \left[\left| \sigma ^{2} -\langle D(-L) ^{-1} X, DX\rangle \right|\right] + \sum_{j=1} ^{d} \mathbb{E} \left[\left| \langle D(-L) ^{-1} X, DY_{j}\rangle \right|\right] \right),
		\end{equation}
		with $C>0$.
	\end{theorem}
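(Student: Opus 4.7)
The plan is to adapt the classical one-dimensional Stein--Malliavin argument by letting the Stein equation carry the vector $\mathbf{y}$ as a parameter. First, I would recall that by the definition \eqref{dw} of the Wasserstein distance, it suffices to bound $|\mathbb{E}[h(X,\mathbf{Y})]-\mathbb{E}[h(Z,\mathbf{Y})]|$ uniformly over $1$-Lipschitz $h:\mathbb{R}^{d+1}\to\mathbb{R}$, where in the right-hand expectation $Z\sim N(0,\sigma^{2})$ is taken independent of $\mathbf{Y}$. For each fixed $\mathbf{y}\in\mathbb{R}^{d}$, I would introduce the Stein equation associated with $N(0,\sigma^{2})$,
\begin{equation*}
	\sigma^{2}\,\partial_{x}f(x,\mathbf{y})-x\,f(x,\mathbf{y})=h(x,\mathbf{y})-\mathbb{E}\left[h(Z,\mathbf{y})\right],
\end{equation*}
whose unique bounded solution $f(\cdot,\mathbf{y})$ admits the Ornstein--Uhlenbeck semigroup representation $f(x,\mathbf{y})=-\int_{0}^{\infty}\bigl(P_{t}h(\cdot,\mathbf{y})(x)-\mathbb{E}[h(Z,\mathbf{y})]\bigr)\,dt$. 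Using this representation and the fact that $h$ is $1$-Lipschitz jointly in $(x,\mathbf{y})$, one obtains that both $\|\partial_{x}f\|_{\infty}$ and $\|\partial_{y_{j}}f\|_{\infty}$ for $j=1,\dots,d$ are bounded by a constant depending only on $\sigma$.

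Next, plugging $(X,\mathbf{Y})$ into the Stein equation and taking expectation gives
\begin{equation*}
	\mathbb{E}[h(X,\mathbf{Y})]-\mathbb{E}[h(Z,\mathbf{Y})]=\mathbb{E}\bigl[\sigma^{2}\partial_{x}f(X,\mathbf{Y})-X\,f(X,\mathbf{Y})\bigr].
\end{equation*}
The next step is the Malliavin integration by parts: since $X=LL^{-1}X$ with $L$ the Ornstein--Uhlenbeck generator, the duality $\mathbb{E}[X G]=\mathbb{E}[\langle DG,D(-L)^{-1}X\rangle]$ applied to $G=f(X,\mathbf{Y})$ combined with the chain rule
\begin{equation*}
	Df(X,\mathbf{Y})=\partial_{x}f(X,\mathbf{Y})\,DX+\sum_{j=1}^{d}\partial_{y_{j}}f(X,\mathbf{Y})\,DY_{j}
\end{equation*}
yields
\begin{equation*}
	\mathbb{E}\bigl[X\,f(X,\mathbf{Y})\bigr]=\mathbb{E}\bigl[\partial_{x}f(X,\mathbf{Y})\,\langle DX,D(-L)^{-1}X\rangle\bigr]+\sum_{j=1}^{d}\mathbb{E}\bigl[\partial_{y_{j}}f(X,\mathbf{Y})\,\langle DY_{j},D(-L)^{-1}X\rangle\bigr].
\end{equation*}
Subtracting from $\sigma^{2}\mathbb{E}[\partial_{x}f(X,\mathbf{Y})]$ produces the key identity
\begin{equation*}
	\mathbb{E}[h(X,\mathbf{Y})]-\mathbb{E}[h(Z,\mathbf{Y})]=\mathbb{E}\bigl[\partial_{x}f(X,\mathbf{Y})(\sigma^{2}-\langle DX,D(-L)^{-1}X\rangle)\bigr]-\sum_{j=1}^{d}\mathbb{E}\bigl[\partial_{y_{j}}f(X,\mathbf{Y})\,\langle DY_{j},D(-L)^{-1}X\rangle\bigr].
\end{equation*}

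Finally, I would bound the right-hand side by putting $\partial_{x}f$ and $\partial_{y_{j}}f$ in absolute value using the uniform Stein bounds from the first step, which yields the two terms on the right of \eqref{ineq1} up to a constant depending only on $\sigma$. Taking the supremum over $1$-Lipschitz $h$ concludes the proof. The main technical obstacle is the joint control of the partials $\partial_{y_{j}}f$ of the Stein solution: unlike the classical scalar case where only smoothness in $x$ is used, here one must exploit the joint Lipschitz regularity of $h$ in all $d+1$ coordinates, and verify via the Ornstein--Uhlenbeck semigroup representation that differentiation in $\mathbf{y}$ commutes with the $t$-integral and produces a uniformly bounded function.
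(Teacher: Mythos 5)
The paper offers no proof of this theorem — it is restated verbatim from \cite{T2} — and your reconstruction is correct and follows exactly the strategy of that source: a Stein equation in $x$ parametrized by $\mathbf{y}$, Malliavin integration by parts via $X=LL^{-1}X$ combined with the chain rule $Df(X,\mathbf{Y})=\partial_x f\,DX+\sum_j\partial_{y_j}f\,DY_j$ (which is what produces the cross terms $\langle D(-L)^{-1}X,DY_j\rangle$), and uniform bounds on $\partial_x f$ and $\partial_{y_j}f$. The only imprecision is that $-\int_0^\infty\bigl(P_t h(\cdot,\mathbf{y})(x)-\mathbb{E}[h(Z,\mathbf{y})]\bigr)\,dt$ solves the \emph{second-order} equation $\sigma^2\partial_x^2 g-x\,\partial_x g=h-\mathbb{E}[h(Z,\mathbf{y})]$, the bounded solution of the first-order Stein equation being its $x$-derivative; with that adjustment (and a routine mollification of $h$ to justify the chain rule and the differentiation under the integral in $y_j$, noting that $\partial_{y_j}f(\cdot,\mathbf{y})$ is the Stein solution associated with the bounded function $\partial_{y_j}h(\cdot,\mathbf{y})$), the argument is complete.
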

	Let us state and prove the main result of this section. 
	
	\begin{theorem}\label{tt2}
		Let $ x\in \mathbb{R}$ be fixed. 	
		Consider the random sequences $( U_{N, x}(u_{0}), N\geq 1)$ and $ (\mathbf{Y}_{N, x}(u_{0}), N\geq 1)$ given by (\ref{unx2}) and (\ref{ynx2}), respectively. Let $Z \sim N(0, \sigma _{\theta } ^{2})$, with $ \sigma _{\theta } $ given by (\ref{s2}). 
		Then, there exists a constant $C>0$ such that 
		\begin{equation*}
			d_{W}\left(\mathbb{P}_{ (U _{N, x}(u_{0}),\mathbf{Y}_{N,x}(u_{0}))}, \mathbb{P}_{ Z}\otimes \mathbb{P}_{ \mathbf{Y}_{ N, x}(u_{0})}\right)\leq C \left( \frac{ m(N)}{N} \right) ^{\frac{1}{2}}, \quad N\geq 1.
		\end{equation*}
		In particular, if $\lim\limits_{N\to \infty}\frac{m(N)}{N}=0$, then the sequences $(U_{N, x}(u_{0}), N\geq 1 )$ and $(\mathbf{Y}_{N, x}(u_{0}), N\geq 1)$ are $W$-asymptotically independent. 
	\end{theorem}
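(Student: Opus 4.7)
The plan is to apply the multidimensional Stein--Malliavin bound \Cref{tt1} to the pair $(\bar U_{N,x}(u_0), \mathbf{Y}_{N,x}(u_0))$, where $\bar U_{N,x}(u_0) := U_{N,x}(u_0) - \mathbb{E}[U_{N,x}(u_0)]$ is the centered version. As in \eqref{23n-3}, the cost of passing from $U_{N,x}(u_0)$ to $\bar U_{N,x}(u_0)$ in the Wasserstein distance is at most $|\mathbb{E}[U_{N,x}(u_0)]|$, which is $O(1/N)$ by \eqref{23n-2} and will be absorbed at the end. The first term on the right-hand side of \eqref{ineq1}, namely $\mathbb{E}|\sigma_\theta^2 - \langle D(-L)^{-1}\bar U_{N,x}(u_0), D\bar U_{N,x}(u_0)\rangle|$, decomposes via $\mathbb{E}\langle D(-L)^{-1}\bar U, D\bar U\rangle = \mathbb{E}\bar U^2$ into precisely the two quantities controlled in \eqref{22n-11} and \eqref{22n-12} within the proof of \Cref{tt11}, and hence is $O(1/\sqrt{N})$; I would simply recycle those computations.

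The new work is the cross sum $\sum_{j\in J_N}\mathbb{E}|\langle D(-L)^{-1}\bar U_{N,x}(u_0), Du_0(t_j,x)\rangle|$. Writing $u_0(t_j,x) = I_1(h_{j,x})$ with $h_{j,x}(s,y) = G(\theta(t_j-s), x-y)\mathbf{1}_{[0,t_j]}(s)$, so that $Du_0(t_j,x) = h_{j,x}$, and using the chaos representation \eqref{21a-5}, the inner product equals
\begin{equation*}
\sqrt{N}\,I_3\!\Bigl(\textstyle\sum_{i} c_{ij}\, g_{i,x}^{\otimes 3}\Bigr) + \sqrt{N}\,I_1\!\Bigl(\textstyle\sum_{i} c_{ij}\,\|g_{i,x}\|^2 g_{i,x}\Bigr),
\end{equation*}
where $c_{ij} := \langle g_{i,x}, h_{j,x}\rangle = \mathbb{E}[\Delta_i u_0(x)\, u_0(t_j,x)]$. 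Chaos orthogonality and the isometry give
\begin{equation*}
\mathbb{E}|\langle D(-L)^{-1}\bar U_{N,x}(u_0), Du_0(t_j,x)\rangle|^2 = 6N\sum_{i,k} c_{ij}c_{kj}\langle g_{i,x},g_{k,x}\rangle^3 + N\sum_{i,k} c_{ij}c_{kj}\|g_{i,x}\|^2\|g_{k,x}\|^2\langle g_{i,x},g_{k,x}\rangle.
\end{equation*}

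The main obstacle is obtaining a sharp enough control on $c_{ij}$: the crude uniform bound $|c_{ij}| \leq C/\sqrt N$ produces an extra factor $\sqrt{N}$ in $\sum_i|c_{ij}|$ and is not good enough. Using the explicit temporal covariance of $u_0$,
\begin{equation*}
c_{ij} = \tfrac{1}{\sqrt{2\pi\theta}}\bigl[(\sqrt{t_{i+1}+t_j} - \sqrt{t_i+t_j}) - (\sqrt{|t_{i+1}-t_j|} - \sqrt{|t_i-t_j|})\bigr],
\end{equation*}
a case split in $i<j$, $i=j$, $i>j$ and the elementary identity $\sqrt{a}-\sqrt{b} = (a-b)/(\sqrt a+\sqrt b)$ yield $|c_{ij}| \leq C/\sqrt{N(|i-j|\vee 1)}$, so that $\sum_i|c_{ij}| \leq C$ uniformly in $j$. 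Injecting this into the identity above and splitting diagonal ($i=k$, where \eqref{a1} gives $\|g_{i,x}\|^2 \leq C/\sqrt N$) from off-diagonal ($i\neq k$, where \eqref{a5} gives $|\langle g_{i,x},g_{k,x}\rangle|\leq C/N^2$), I expect each $\mathbb{E}|\cdot|^2 \leq C/N^2$; Cauchy--Schwarz then bounds each cross term by $C/N$, and summing over $j \in J_N$ gives $C\,m(N)/N$. Combining,
\begin{equation*}
d_W\bigl(\mathbb{P}_{(U_{N,x}(u_0),\mathbf{Y}_{N,x}(u_0))}, \mathbb{P}_Z \otimes \mathbb{P}_{\mathbf{Y}_{N,x}(u_0)}\bigr) \leq C\Bigl(\tfrac{1}{\sqrt N} + \tfrac{m(N)}{N}\Bigr) \leq 2C\sqrt{\tfrac{m(N)}{N}},
\end{equation*}
where the last inequality uses $1 \leq m(N) \leq N$, which forces both $1/\sqrt N \leq \sqrt{m(N)/N}$ and $m(N)/N \leq \sqrt{m(N)/N}$. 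This is the desired estimate, and the asymptotic $W$-independence follows whenever $m(N)/N \to 0$.
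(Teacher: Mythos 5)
Your overall route is the same as the paper's: center $U_{N,x}(u_{0})$, apply the multidimensional Stein--Malliavin bound, recycle the variance estimates \eqref{22n-11} and \eqref{22n-12} from the proof of Theorem \ref{tt11}, and control the cross terms through the coefficients $c_{ij}=\langle g_{i,x},h_{j,x}\rangle$, which are exactly the quantities $P_{N}(i,j)$ of Lemma \ref{ll2}; your bounds $|c_{ij}|\le C/\sqrt{N}$ and $\sum_{i}|c_{ij}|\le C$ are precisely the content of that lemma. The gap is in the final numerology. The claim $\mathbb{E}\left[\langle D(-L)^{-1}\bar U_{N,x}(u_{0}),Du_{0}(t_{j},x)\rangle^{2}\right]\le C/N^{2}$ is false: the diagonal part of your own identity contains $N\sum_{i}c_{ij}^{2}\|g_{i,x}\|^{6}$, and since $c_{jj}$ is of exact order $N^{-1/2}$ (because $\sqrt{2j+1}-\sqrt{2j}-1$ stays bounded away from $0$) while $\|g_{i,x}\|^{6}\asymp N^{-3/2}$ by \eqref{a1}, this quantity is bounded \emph{below} by $cN^{-3/2}$. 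The best upper bound your estimates give is $N\cdot\bigl(\sup_{i}|c_{ij}|\bigr)\bigl(\sum_{i}|c_{ij}|\bigr)\cdot CN^{-3/2}\le C/N$ (this is what the paper proves for its term $t_{1,N}(q)$), and even the refined $|c_{ij}|\le C/\sqrt{N(|i-j|\vee 1)}$ only improves this to $O(N^{-3/2}\log N)$, never to $O(N^{-2})$.

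This matters because you invoke the bound in the form $\sum_{j\in J_{N}}\mathbb{E}\left[|\langle D(-L)^{-1}X,DY_{j}\rangle|\right]$ of \eqref{ineq1}. With the correct per-term estimate $\mathbb{E}[\langle\cdot\rangle^{2}]\le C/N$, Cauchy--Schwarz gives $\mathbb{E}|\langle\cdot\rangle|\le C/\sqrt{N}$ and your cross sum is only $O\bigl(m(N)/\sqrt{N}\bigr)$, which does not imply the target $O\bigl(\sqrt{m(N)/N}\bigr)$ once $m(N)\gg\sqrt{N}$; for $m(N)=N$ it even diverges. The repair is to use the other form of the multidimensional bound, namely $\sqrt{\sum_{j\in J_{N}}\mathbb{E}\left[\langle D(-L)^{-1}X,DY_{j}\rangle^{2}\right]}$ as in \eqref{intro-1} --- this is exactly what the paper does in its display \eqref{23n-12} --- for which the (correct) estimate $\mathbb{E}[\langle\cdot\rangle^{2}]\le C/N$ summed over $J_{N}$ yields $\sqrt{m(N)/N}$ directly. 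With that substitution, and the weaker but sufficient $O(1/N)$ bound on each second moment, the rest of your argument goes through.
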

	\begin{proof}
		The first step is to replace  $U_{N, x}(u_{0})$ by its centered version $\bar{U}_{N, x}(u_{0})$ given by (\ref{23n-1}). 
		By the triangle's inequality,
		\begin{eqnarray}
			d_{W} \left( 	P_{ (U_{N, x}(u_{0}), \mathbf{Y}_{N,x})(u_{0})}, P _{Z} \otimes P_{ \mathbf{Y}_{N, x}(u_{0})}\right) &\leq & 	d_{W} \left( 	P_{ (\bar{U}_{N, x}(u_{0}), \mathbf{Y}_{N,x})}, P _{Z} \otimes P_{ \mathbf{Y}_{N, x}(u_{0})}\right)\nonumber \\
			&&+  d_{W} \left( 	P_{ (U_{N, x}(u_{0}), \mathbf{Y}_{N,x})},	P_{ (\bar{U}_{N, x}(u_{0}), \mathbf{Y}_{N,x})}\right).\label{23s-15}
		\end{eqnarray}
		Now, by the definition of the Wasserstein distance (\ref{dw}) and the inequality (\ref{23n-2})
		
		\begin{eqnarray*}
			d_{W} \left( 	P_{ (U_{N, x}(u_{0}), \mathbf{Y}_{N,x})},	P_{ (\bar{U}_{N, x}(u_{0}), \mathbf{Y}_{N,x})}\right) &\leq & \mathbb{E} \left[\vert U_{N, x} (u_{0})- \bar{U}_{N, x}(u_{0})\vert\right] \\
			&\leq& \vert \mathbb{E} \left[U_{N,x}(u_{0})\right]\vert\leq C\frac{1}{N}.
		\end{eqnarray*}
		So
		\begin{equation}\label{23n-15}
			d_{W} \left( 	P_{ (U_{N, x}(u_{0}), \mathbf{Y}_{N,x})(u_{0})}, P _{Z} \otimes P_{ \mathbf{Y}_{N, x}(u_{0})}\right) \leq  	d_{W} \left( 	P_{ (\bar{U}_{N, x}(u_{0}), \mathbf{Y}_{N,x})}, P _{Z} \otimes P_{ \mathbf{Y}_{N, x}(u_{0})}\right)+ C \frac{1}{N}.
		\end{equation}
		We now estimate $	d_{W} \left( 	P_{ (\bar{U}_{N, x}(u_{0}), \mathbf{Y}_{N,x})}, P _{Z} \otimes P_{ \mathbf{Y}_{N, x}(u_{0})}\right)$.  To do this, we apply the Stein-Malliavin bound in Theorem \ref{tt1} to get 
		\begin{eqnarray}
			d_{W} &\left( 	P_{ (\bar{U}_{N, x}(u_{0}), \mathbf{Y}_{N,x}(u_{0}))}, P _{Z} \otimes P_{ \mathbf{Y}_{N, x}(u_{0})}\right) 
			\leq  C\bigg[  \mathbb{E} \left[\left| \langle D(-L) ^{-1} \bar{U}_{N, x}(u_{0}), D\bar{U}_{N, x}(u_{0})\rangle -\sigma ^{2}\right|\right]  \nonumber\\
			&\left.+ \sqrt{\sum _{q\in J_{N}}  \mathbb{E} \left[\langle D(-L) ^{-1} \bar{U}_{N, x}(u_{0}), Du(t_{q}, x)\rangle ^{2}\right]} \right]. \label{23n-12}
		\end{eqnarray}
		By (\ref{22n-11})  and (\ref{22n-22})  in  the proof of Theorem \ref{tt11}, 
		\begin{equation}\label{23n-10}
			\mathbb{E} \left[\left| \langle D(-L) ^{-1} \bar{U}_{N, x}(u_{0}), D\bar{U}_{N, x}(u_{0})\rangle -\sigma ^{2}\right|\right] \leq C \frac{1}{\sqrt{N}},
		\end{equation}
		so it remains to bound the quantity
		\begin{equation*}
			\sqrt{\sum _{q\in J_{N}}  \mathbb{E} \left[\langle D(-L) ^{-1} \bar{U}_{N, x}(u_{0}), Du(t_{q}, x)\rangle ^{2}\right]}.
		\end{equation*}
		We  first calculate 
		$$ \langle D(-L) ^{-1} \bar{U}_{N, x}(u_{0}), Du(t_{q}, x)\rangle, \quad q=1,..., N. $$
		By (\ref{21a-5}) and the product formula (\ref{prod}),
		\begin{eqnarray*}
			\langle D(-L) ^{-1} \bar{U}_{N, x}(u_{0}), Du(t_{q}, x)\rangle
			&=&\sqrt{N} \sum_{i=0} ^{N-1} \left[ I_{3} (g_{i,x} ^{\otimes 3}) + \Vert g_{i,x}\Vert ^{2}I_{1}(g_{i, x})\right] \langle g_{i,x}, h_{t_{q}, x}\rangle \\
			&=& \sqrt{N} \sum_{i=0} ^{N-1} \left[ I_{3} (g_{i,x} ^{\otimes 3}) + \Vert g_{i,x}\Vert ^{2}I_{1}(g_{i, x})\right] P_{N} (i,q),
		\end{eqnarray*}
		where
		\begin{equation*}
			h_{t_{q}, x}(s,y)=G(\theta (t_{q}-s), x-y)1_{ (0, t_{q})} (s)
		\end{equation*}
		and
		\begin{equation}\label{piq}
			P_{N}(i,q)= \langle g_{i,x}, h_{t_{q}, x}\rangle.
		\end{equation}
		Therefore,
		\begin{eqnarray*}
			&&\mathbb{E} \left[\langle D(-L) ^{-1} \bar{U}_{N, x}(u_{0}), Du(t_{q}, x)\rangle ^{2}\right] \\
			&=&N \sum _{i, j=0} ^{N-1} \left[ 3! \langle g_{i, x}, g_{j, x}\rangle ^{3} +\Vert g_{i,x}\Vert ^{2}\Vert g_{j,x}\Vert ^{2}\langle g_{i, x} , g_{, x}\rangle \right] P_{N} (i,q) P_{N}(j,q)\\
			&=& N\sum_{i=0} ^{N-1}  (3!+1)\Vert g_{i,x} \Vert ^{6}P_{N} (i,q)^{2}\\
			&&+N \sum_{i,j=0; i\not=j}^{N}\left[ 3! \langle g_{i, x}, g_{j, x}\rangle ^{3} +\Vert g_{i,x}\Vert ^{2}\Vert g_{j,x}\Vert ^{2}\langle g_{i, x} , g_{j, x}\rangle \right] P_{N} (i,q) P_{N}(j,q).
		\end{eqnarray*}
		with $ P_{N}(i,q)$ given by (\ref{piq}). We write
		\begin{equation*}
			\mathbb{E} \left[\langle D(-L) ^{-1} \bar{U}_{N, x}(u_{0}), Du(t_{q}, x)\rangle ^{2}\right]= t_{1, N}(q)+ t_{2, N}(q),
		\end{equation*}
		where
		\begin{equation*}
			t_{1, N}(q)= 25N\sum _{i=0} ^{N-1}\Vert g_{i,x} \Vert ^{6}P_{N} (i,q)^{2},
		\end{equation*}
		and
		\begin{equation*}
			t_{2, N}(q)= N \sum_{i,j=0; i\not=j}^{N}\left[ 3! \langle g_{i, x}, g_{j, x}\rangle ^{3} +\Vert g_{i,x}\Vert ^{2}\Vert g_{j,x}\Vert ^{2}\langle g_{i, x} , g_{j, x}\rangle \right] P_{N} (i,q) P_{N}(j,q).
		\end{equation*}
		By using (\ref{a1}) and the estimates (\ref{eq:P_Niq}) in \Cref{ll2} from below,
		\begin{eqnarray*}
			\vert t_{1, N}(q)\vert \leq  C \frac{1}{\sqrt{N}} \sum _{i=0} ^{N-1} P_{N}(i, q) ^{2} \leq C \frac{1}{\sqrt{N}}\sup_{i=0,1,..., N-1}\vert P_{N} (i,q)\vert \sum _{i=0} ^{N-1} \vert P_{N}(i, q)\vert \leq  C \frac{1}{N}.
		\end{eqnarray*}
		Concerning the summand $t_{2, N}(q)$, we write, by using (\ref{a5}) and \Cref{ll2} from below,
		\begin{equation*}
			\vert t_{2, N}(q)\vert \leq C \frac{1}{ N ^{2}} \sum_{i,j=0; i\not=j}^{N} \vert P_{N}(i, q)\vert \vert P_{N} (j, q)\vert \leq C \frac{1}{ N ^{2}}.
		\end{equation*}
		Hence 
		\begin{equation*}
			\sqrt{ 	\mathbb{E} \left[\langle D(-L) ^{-1} \bar{U}_{N, x}(u_{0}), Du(t_{q}, x)\rangle ^{2}\right]}\leq \sqrt{ \sum _{q\in J_{N}} (\vert t_{1, N}(q)\vert + \vert t_{1, N}(q)\vert )}\leq C \sqrt{ \frac{m(N)}{N}}.
		\end{equation*}
		With the above estimate and (\ref{23n-10}), we obtain
		\begin{equation*}
			d_{W} \left(P_{ (\bar{U}_{N, x}(u_{0}), \mathbf{Y}_{N,x}(u_{0}))}, P_{Z} \otimes P_{ \mathbf{Y}_{N, x}(u_{0})}\right) \leq C \sqrt{ \frac{m(N)}{N}},
		\end{equation*}
		and, by combining this with (\ref{23n-12}), we can conclude.
	\end{proof}

	The following result has been used in the proof  of  Theorem \ref{tt2}.
	\begin{lemma}\label{ll2}
		for every $N\geq 1$ and $1\leq i,q \leq N$, 
		\begin{equation*}
			P_{N}(i,q)=\frac{1}{\sqrt{2\pi \theta N}}\left( \sqrt{i+1+q}-\sqrt{i+q}-\sqrt{\vert i+1-q\vert}+ \sqrt{\vert i-q\vert }\right).
		\end{equation*}
		In particular,
		\begin{equation}\label{eq:P_Niq}
			\vert P_{N}(i,q)\vert \leq C \frac{1}{\sqrt{N}} \quad \mbox{and}
			\quad \sum _{i=0} ^{N-1} \vert P_{N}(i,q)\vert \leq C, \quad 1\leq i,q \leq N.
		\end{equation}
	\end{lemma}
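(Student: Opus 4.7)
The plan is to recognize $P_N(i,q)$ as a covariance, compute it using the known covariance function of $u_0$ with respect to its time variable, and then verify the two bounds via elementary estimates on the resulting square-root expressions.

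First I would observe that by the definitions of $g_{i,x}$ in \eqref{gix} and of $h_{t_q,x}$, we have $\Delta_i u_0(x) = I_1(g_{i,x})$ and $u_0(t_q,x) = I_1(h_{t_q,x})$, so by the isometry of the Wiener integral,
\begin{equation*}
P_N(i,q) = \langle g_{i,x}, h_{t_q,x}\rangle = \mathbb{E}\bigl[(u_0(t_{i+1},x)-u_0(t_i,x))\,u_0(t_q,x)\bigr].
\end{equation*}
Plugging in the explicit covariance $\mathbb{E}[u_0(t,x)u_0(s,x)] = \frac{1}{\sqrt{2\pi\theta}}(\sqrt{t+s}-\sqrt{|t-s|})$ recalled in \Cref{sec21}, together with $t_i = i/N$, yields the claimed closed form after factoring out $1/\sqrt{N}$.

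For the pointwise bound $|P_N(i,q)| \le C/\sqrt{N}$, it suffices to use $\sqrt{n+1}-\sqrt{n}\le 1$ applied to each of the four square roots; the factor $1/\sqrt{2\pi\theta N}$ in front then gives the desired constant. For the summation bound, I would write
\begin{equation*}
\sum_{i=0}^{N-1} |P_N(i,q)| \le \frac{1}{\sqrt{2\pi\theta N}} \left( \sum_{i=0}^{N-1} \bigl|\sqrt{i+1+q}-\sqrt{i+q}\bigr| + \sum_{i=0}^{N-1} \bigl|\sqrt{|i+1-q|}-\sqrt{|i-q|}\bigr| \right),
\end{equation*}
and recognize each inner sum as a total variation. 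The first sum telescopes since $i\mapsto\sqrt{i+q}$ is increasing, giving $\sqrt{N+q}-\sqrt{q}\le\sqrt{N}$. For the second, split at $i=q$: on $\{0,\ldots,q-1\}$ the sequence $\sqrt{q-i}$ is decreasing with telescoping sum $\sqrt{q}$, while on $\{q,\ldots,N-1\}$ the sequence $\sqrt{i-q}$ is increasing with telescoping sum $\sqrt{N-q}$, for a total of $\sqrt{q}+\sqrt{N-q}\le C\sqrt{N}$. Combining yields $\sum_{i=0}^{N-1}|P_N(i,q)|\le C$, uniformly in $q$.

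There is no substantive obstacle here: the main point is recognizing the sums as telescoping once one handles the non-monotonicity of $i\mapsto\sqrt{|i-q|}$ by splitting the range at $i=q$. The only mild care needed is to keep track of the $1/\sqrt{N}$ factor throughout to obtain the correct scaling in both bounds.
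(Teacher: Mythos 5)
Your proposal is correct and follows essentially the same route as the paper: identify $P_N(i,q)$ as the covariance $\mathbb{E}[(\Delta_i u_0(x))\,u_0(t_q,x)]$, evaluate it via the temporal covariance of $u_0$ (the paper re-derives this from the heat-kernel semigroup property rather than citing the displayed covariance formula, but that is the same computation), and then bound the sum by splitting at $i=q$ and telescoping, exactly as in the paper.
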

	\begin{proof}By using the semigroup propery of the heat kernel,
		\begin{eqnarray*}
			P_{N}(i,q)
			&=& \int_{0} ^{ t_{i+1} \wedge t_{q}}da \int_{\mathbb{R}} dz G (\theta( t_{i+1}-a), x-z) G (\theta (t_{q}-a), x-z) \\
			&&-  \int_{0} ^{ t_{i} \wedge t_{q}}da \int_{\mathbb{R}} dz G (\theta( t_{i}-a), x-z) G (\theta (t_{q}-a), x-z) \\
			&=& \frac{1}{\sqrt{2\pi \theta}} \int_{0} ^{ t_{i+1} \wedge t_{q}}da (t_{i+1}+t_{q}-2a)^{-\frac{1}{2}}- \frac{1}{\sqrt{2\pi \theta}} \int_{0} ^{ t_{i} \wedge t_{q}}da (t_{i}+t_{q}-2a)^{-\frac{1}{2}}\\
			&=&  \frac{1}{\sqrt{2\pi \theta}} \left( \sqrt{t_{i+1}+t_{q}} -  \sqrt{t_{i}+t_{q}}-\sqrt{\vert t_{i+1}-t_{q}\vert}+ \sqrt{\vert t_{i}-t_{q}\vert}\right).
		\end{eqnarray*}
		Thus
		\begin{equation}
			\label{piq2}
			P_{N}(i,q)= \frac{1}{\sqrt{2\pi \theta N}}\left( \sqrt{i+1+q}-\sqrt{i+q}-\sqrt{\vert i+1-q\vert}+ \sqrt{\vert i-q\vert }\right). 
		\end{equation}
		The bound (\ref{eq:P_Niq}) is trivial with $ C=\frac{2}{\sqrt{2\pi \theta}}$. 
		To deal with (\ref{eq:P_Niq}), 
		\begin{eqnarray*}
			&&	\sum_{i=0}^{N-1} \vert P_{N}(i,q)\vert \\
			&\leq & \frac{1}{\sqrt{2\pi \theta N}}\left[ \sum _{i=0} ^{N-1} \left( \sqrt{i+q+1}-\sqrt{i+q}\right) + \sum _{i=0}^{N-1} \left| \sqrt{ \vert i+1-q\vert }-\sqrt{\vert i-q\vert }\right|  \right]\\
			&=&  \frac{1}{\sqrt{2\pi \theta N}}\left[ \sqrt{N+q}-\sqrt{q} +  \sum _{i=0}^{N-1} \left| \sqrt{ \vert i+1-q\vert }-\sqrt{\vert i-q\vert }\right| \right].
		\end{eqnarray*}
		We compute the sum in the right-hand side above as follows:
		\begin{eqnarray*}
			&& 	 \sum _{i=0}^{N-1} \left|  \sqrt{ \vert i+1-q\vert }-\sqrt{\vert i-q\vert }\right| \\
			&&= \sum_{i=q+1} ^{N-1} \left( \sqrt{i+1-q} -\sqrt{i-q}\right)+ 1 +\sum _{i=0} ^{q-1} \left( \sqrt{q-i} -\sqrt{q-i-1}\right)\\
			&&=\sqrt{N-q} -\sqrt{q}. 
		\end{eqnarray*}
		Therefore,
		\begin{equation*}
			\sum_{i=0}^{N-1} \vert P_{N}(i,q)\vert \leq C \frac{1}{\sqrt{N} } \left( \sqrt{N+q}-2\sqrt{q}+\sqrt{N-q}\right) \leq C. 
		\end{equation*}
	\end{proof}

	\subsection{The semilinear stochastic heat equation}
	In what follows we analyze the asymptotic independence between the renormalized quartic variation $ U_{N, x}(u)$ of the solution (\ref{mild}) and the observation vector. 
	More precisely, we extend \Cref{tt2} for the linear case to the semilinear one, as follows:
	\begin{theorem}\label{tt3}
		Consider the random sequences $ (U_{N, x}(u), N\geq 1)$ and $ (\mathbf{Y}_{N,x}(u), N\geq 1)$ given by (\ref{19a-3}) and (\ref{ynx}), respectively. 
		Then, for every $\beta \in (3/4,1),$ there exists a constant $C>0$ such that
		\begin{equation}\label{24s-11}
			d_{W}\left( P_{ (U_{N,x}(u), \mathbf{Y}_{N, x}(u))}, P_{Z}\otimes P_{ \mathbf{Y}_{N,x}(u)}\right)\leq C\left(  \left( \frac{m(N)}{N} \right) ^{\frac{1}{2}}+ \frac{ m(N)}{N^{\beta}}+\left( \frac{1}{N}\right) ^{\beta-\frac{3}{4}}\right), \quad N\geq 1.
		\end{equation}
		In particular, if 
		\begin{equation}
			\label{24n-1}
			\lim\limits_{N\to \infty}\frac{ m(N)}{N^\gamma}=0 \quad \mbox{for some } \gamma\in (0,\beta),
		\end{equation}
		then $(U_{N, x}(u), N\geq 1)$ and $ (\mathbf{Y}_{N,x}(u), N\geq 1)$ are $W$-asymptotically independent.
	\end{theorem}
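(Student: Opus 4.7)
The plan is to deduce \Cref{tt3} from \Cref{tt2} by absorbing the drift contribution of the nonlinear part $X$ into additional correction terms, in the same spirit as \Cref{pp4} was deduced from \Cref{tt11}. Using the splitting (\ref{deco}) together with $U_{N,x}(u) = U_{N,x}(u_0) + \sqrt N\, P_{N,x}$ (with $P_{N,x}$ as in (\ref{pnx})), I would first write
\[
d_{W}\!\left(\mathbb{P}_{(U_{N,x}(u),\mathbf{Y}_{N,x}(u))},\mathbb{P}_{Z}\otimes \mathbb{P}_{\mathbf{Y}_{N,x}(u)}\right) \leq \mathbb{E}|U_{N,x}(u) - U_{N,x}(u_0)| + d_{W}\!\left(\mathbb{P}_{(U_{N,x}(u_0),\mathbf{Y}_{N,x}(u))},\mathbb{P}_{Z}\otimes \mathbb{P}_{\mathbf{Y}_{N,x}(u)}\right),
\]
and control the first summand by $\sqrt N\,\mathbb{E}|P_{N,x}|\leq C(1/N)^{\beta-3/4}$ using the estimate (\ref{24s-1}) obtained in \Cref{pp4}. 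This accounts for the third term in (\ref{24s-11}).

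The second summand is the substantive one. After a harmless $O(1/N)$ centering of $U_{N,x}(u_0)$ analogous to (\ref{23n-3}), the plan is to apply the multidimensional Stein--Malliavin inequality of \Cref{tt1} to the vector $(\bar{U}_{N,x}(u_0),\mathbf{Y}_{N,x}(u))$; both components are smooth functionals of the white noise $W$, so this is legitimate. The first right-hand side term of (\ref{ineq1}) involves only $\bar{U}_{N,x}(u_0)$ and is already $\leq C/\sqrt N$ by (\ref{22n-12}). The coupling sum splits via $Du(t_i,x) = Du_0(t_i,x) + DX(t_i,x)$ into two pieces: the $Du_0(t_i,x)$ contribution is exactly the one handled in the proof of \Cref{tt2} through \Cref{ll2}, producing the $\sqrt{m(N)/N}$ term, while the genuinely new contribution is
\[
\sum_{i\in J_{N}} \mathbb{E}\bigl|\langle D(-L)^{-1}\bar{U}_{N,x}(u_0),\, DX(t_i,x)\rangle\bigr|.
\]

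The hard part will be showing that this last sum is $O(m(N)/N^{\beta})$. My plan is to differentiate (\ref{y}) in the Malliavin sense to see that $DX$ satisfies (distributionally, since $b$ is merely Lipschitz) the linear integral equation
\[
D_{\tau,z}X(t,x) = \int_{\tau}^{t}\int_{\mathbb{R}} G(\theta(t-s),x-y)\,b'(u(s,y))\,D_{\tau,z}u(s,y)\,dy\,ds,
\]
and then to propagate the Hölder-type regularity (\ref{20a-1}) of $X$ onto its Malliavin derivative through a Picard iteration and a Gronwall-type argument, yielding an $L^{2}(\Omega;H)$ bound of the form $\mathbb{E}\|DX(t,x)\|_{H}^{2}\leq C\,t^{2\beta}$. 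Combined with the Cauchy--Schwarz inequality in $H$ and the uniform control $\mathbb{E}\|D(-L)^{-1}\bar{U}_{N,x}(u_0)\|_{H}^{2} = O(1)$, which one reads off the chaos expansion (\ref{21a-5}), this produces the per-index bound needed to sum to $Cm(N)/N^{\beta}$ over $J_N$. Putting the three contributions together yields (\ref{24s-11}), and the asymptotic independence under (\ref{24n-1}) is immediate: $m(N)=o(N^{\gamma})$ with $\gamma\in(0,\beta)$ forces $\sqrt{m(N)/N}\to 0$ (as $\gamma<1$), $m(N)/N^{\beta}\to 0$ (as $\gamma<\beta$), and $(1/N)^{\beta-3/4}\to 0$ (as $\beta>3/4$), so the right-hand side of (\ref{24s-11}) vanishes.
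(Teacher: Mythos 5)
Your overall architecture (peel off $U_{N,x}(u)-U_{N,x}(u_0)=\sqrt{N}\,P_{N,x}$ at cost $CN^{-(\beta-3/4)}$ via \eqref{24s-1}, then handle a pair involving the Gaussian quartic variation) starts out parallel to the paper, but you diverge at the second summand, and the divergence is where the proof breaks. The paper never Malliavin-differentiates the nonlinear part: it replaces $\mathbf{Y}_{N,x}(u)$ by $\mathbf{Y}_{N,x}(u_0)$ as well, using $d_W(\mathbb{P}_{(A,B)},\mathbb{P}_{(A,B')})\le \mathbb{E}\|B-B'\|_1$, pays $2\,\mathbb{E}\|\mathbf{Y}_{N,x}(u)-\mathbf{Y}_{N,x}(u_0)\|_1$ (estimated in \eqref{24s-10}, the source of the $m(N)/N^{\beta}$ term), and then quotes \Cref{tt2} as a black box. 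You instead keep $\mathbf{Y}_{N,x}(u)$ and apply \Cref{tt1} to the mixed pair, which forces you to control the genuinely new coupling terms $\mathbb{E}\bigl|\langle D(-L)^{-1}\bar{U}_{N,x}(u_0), DX(t_i,x)\rangle\bigr|$.

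The gap is in your treatment of exactly these terms. Cauchy--Schwarz in $\mathcal{H}$ together with $\mathbb{E}\|D(-L)^{-1}\bar{U}_{N,x}(u_0)\|_{\mathcal{H}}^2=O(1)$ and a bound $\mathbb{E}\|DX(t,x)\|_{\mathcal{H}}^2\le Ct^{2\beta}$ gives a per-index bound $C\,t_i^{\beta}$, not $C\,N^{-\beta}$: the times $t_i=i/N$ with $i\in J_N$ are macroscopic (of order one for most admissible $J_N$), so summing yields only $O(m(N))$, which does not tend to zero when $m(N)\to\infty$. The exponent $N^{-\beta}$ in the target comes from increments $|t_{i+1}-t_i|^{\beta}=N^{-\beta}$, whereas your estimate only sees the size of $DX$ at a fixed time $t_i$, and $\|DX(t_i,x)\|_{\mathcal{H}}$ is genuinely of order one in $N$. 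Note that even for the Gaussian contribution, the smallness of $\langle D(-L)^{-1}\bar{U}_{N,x}(u_0),Du_0(t_q,x)\rangle$ does not come from $\|Du_0(t_q,x)\|_{\mathcal{H}}$ being small (it is $\asymp t_q^{1/4}$) but from the explicit near-orthogonality $\langle g_{i,x},h_{t_q,x}\rangle=P_N(i,q)$ computed in \Cref{ll2}; a crude norm bound on $DX(t_i,x)$ cannot reproduce such cancellation, so this step of your plan fails as stated. The repair is to avoid differentiating $X$ altogether and to swap the observation vector to its Gaussian counterpart before invoking the Stein--Malliavin inequality, which is what the paper does.
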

	\begin{proof}
		First, we use the triangle's inequality to write
		\begin{eqnarray*}
			&&	d_{W}\left( P_{ (U_{N,x}(u), \mathbf{Y}_{N, x}(u))}, P_{Z}\otimes P_{ \mathbf{Y}_{N,x}(u)}\right)
			\leq  d_{W}\left( P_{ (U_{N,x}(u), \mathbf{Y}_{N, x}(u))}, P_{ (U_{N, x}(u_{0}), \mathbf{Y}_{N,x}(u))}\right)\\
			&&+ d_{W}\left( P_{ (U_{N, x}(u_{0}), \mathbf{Y}_{N,x}(u))}, P_{ (U_{N, x}(u_{0}), \mathbf{Y}_{N,x}(u_{0}))}\right)
			+ d_{W} \left( P_{ (U_{N, x}(u_{0}), \mathbf{Y}_{N,x}(u_{0}))}, P_{Z}\otimes P_{\mathbf{Y}_{N,x}(u_{0})}\right)\\
			&&+d_{W} \left(  P_{Z}\otimes P_{\mathbf{Y}_{N,x}(u_{0})}, P_{Z}\otimes P_{\mathbf{Y}_{N,x}(u)}\right).
		\end{eqnarray*}
		By using the definition of the Wasserstein distance  (\ref{dw}), we deduce
		\begin{eqnarray*}
			&&	d_{W}\left( P_{ (U_{N,x}(u), \mathbf{Y}_{N, x}(u))}, P_{Z}\otimes P_{ \mathbf{Y}_{N,x}(u)}\right)\\
			&&\leq \mathbb{E} \left[\vert U_{N,x}(u)- U_{N,x}(u_{0})\vert\right] + \mathbb{E} \left[\Vert \mathbf{Y}_{N,x}(u)-\mathbf{Y}_{N,x}(u_{0})\Vert _{1}\right]\\
			&&+d_{W} \left( P_{ (U_{N, x}(u_{0}), \mathbf{Y}_{N,x}(u_{0}))}, P_{Z}\otimes P_{\mathbf{Y}_{N,x}(u_{0})}\right)+ \mathbb{E} \left[\Vert \mathbf{Y}_{N,x}(u)-\mathbf{Y}_{N,x}(u_{0})\Vert _{1}\right]\\
			&&\leq \sqrt{N} \mathbb{E}\left[\vert P_{N,x}\vert\right] +2\, \mathbb{E} \left[\Vert \mathbf{Y}_{N,x}(u)-\mathbf{Y}_{N,x}(u_{0})\Vert _{1}\right]
			+ d_{W} \left( P_{ (U_{N, x}(u_{0}), \mathbf{Y}_{N,x}(u_{0}))}, P_{Z}\otimes P_{\mathbf{Y}_{N,x}(u_{0})}\right),
		\end{eqnarray*}
		with $P_{N,x}$ given by (\ref{pnx}). By using (\ref{20a-1}), we notice that
		\begin{eqnarray}
			\mathbb{E} \left[\Vert \mathbf{Y}_{N,x}(u)-\mathbf{Y}_{N,x}(u_{0})\Vert _{1}\right]&=&\sum_{i\in J_{N}} \mathbb{E} \left[\vert X(t_{i+1}, x)- X(t_{i}, x)\vert\right] \nonumber \\
			&\leq & C \sum _{i\in J_{N}} \left( \frac{1}{N} \right) ^{\beta} \leq C \frac{m(N)}{N ^{\beta}},\label{24s-10}
		\end{eqnarray}
		for every $\beta \in (0,1)$.  The conclusion  (\ref{24s-11})  is obtained by (\ref{24s-10}), Theorem \ref{tt2} and (\ref{24s-1}).
	\end{proof}
	
	Let us end this section with some short comments.
	
	\begin{remark}
		\begin{itemize}
			\item[i)] In the semilinear case, the Wasserstein distance between the  renormalized quartic variation and its Gaussian limit  is the not same as in the linear case (compare Theorem \ref{tt11} with Proposition \ref{pp4}). That is, the perturbation denoted by $X$ and given by (\ref{y}) does  affect the speed of convergence of the quartic variation. 
			
			\item[ii)] Also, the condition for the asymptotic independence of the viscosity estimator from the observation vector is different in the semilinear case. For instance, if the cardinal of the data included in $\mathbf{Y}_{N,x}(u)$ is $m(M)=[N ^{\gamma}]$ then  (\ref{24n-1}) is satisfied if $0\leq  \gamma <\beta <1$  (instead of $ 0\leq \gamma<1$ in the linear case).   The perturbation (\ref{y}) may have an influence of the asymptotic independence  when $ \beta \leq \gamma <1$.  
		\end{itemize}
	\end{remark}

	\section{Drift parameter estimator: CLT and asymptotic independence with respect to the data}\label{sec4}
	Let us return to the viscosity parameter estimator $\widehat{\theta}_{N, x}$ defined in Section \ref{sec21} by \eqref{est}. 
	The goal now is to deduce its asymptotic properties, based on the results proved in the previous sections. 
	Although this estimator is strongly connected with the quartic variation $V_{N, x}(u)$, we will get weaker results for it. 
	This is due to the fact that, by the way it is constructed in (\ref{est}), its moments or its Malliavin derivative cannot be controlled as we did for the quartic variation. 
	
	\subsection{Central Limit Theorem for the viscosity  parameter estimator}
	We start by showing that the viscosity parameter estimator satisfies a qualitative Central Limit Theorem in the semilinear case. 
	\begin{theorem}\label{tt5}
		Let $ \widehat{\theta}_{N, x}(u)$ be given by (\ref{est}) with $ x\in \mathbb{R}$ fixed, and set
		\begin{equation}\label{21i-1}
			\overline{\theta}_{N, x}(u):=\sqrt{N} \left( 	\widehat{\theta}_{N,x}(u)-\theta\right), \hskip0.5cm N\geq 1. 
		\end{equation}
		Then the following assertions hold.
		\begin{enumerate}
			\item[(i)] $ \lim\limits_{N\to \infty}\widehat{\theta}_{N, x}(u)=\theta$ in probability.
			\item[(ii)] For every $t\in \mathbb{R}$ there exists a constant $c(t, \theta)$ depending on $t $ and $\theta$ such that
			\begin{equation*}
				\left| \mathbb{P} ( \overline{\theta}_{N, x} \leq t) - \mathbb{P}( Z_{1}\leq t) \right|  \leq c(t, \theta) N ^{-\frac{1}{4}}, \quad N\geq 1,
			\end{equation*}
			where $Z_{1}\sim N(0, \sigma _{1, \theta}^{2})$,  with $\sigma _{1, \theta}^{2}= \frac{\pi ^{2}\theta ^{4}}{36}\sigma _{\theta}^{2}$.
		\end{enumerate}
	\end{theorem}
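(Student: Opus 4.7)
Part (i) follows from the continuous mapping theorem: Proposition \ref{pp4} gives $V_{N,x}(u)\to 6/(\pi\theta)$ in $L^{1}(\mathbb{P})$, hence in probability, and since $y\mapsto 6/(\pi y)$ is continuous at $6/(\pi\theta)>0$, one deduces $\widehat\theta_{N,x}(u)\to\theta$ in probability.

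For (ii) the plan is a first-order delta method. Writing $f(y)=6/(\pi y)$ and $y_{0}=6/(\pi\theta)$, Taylor's formula around $y_{0}$ yields, on the almost-sure event $\{V_{N,x}(u)>0\}$,
\[
\overline\theta_{N,x}(u)=\sqrt{N}\bigl(f(V_{N,x}(u))-f(y_{0})\bigr)=-\tfrac{\pi\theta^{2}}{6}U_{N,x}(u)+R_{N},\qquad R_{N}=\frac{6\,U_{N,x}(u)^{2}}{\pi\,\xi_{N}^{3}\,\sqrt{N}},
\]
for some $\xi_{N}$ lying between $V_{N,x}(u)$ and $y_{0}$. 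Since $Z_{1}$ has the same distribution as $-\tfrac{\pi\theta^{2}}{6}Z$ and $d_{K}$ is invariant under non-zero scalar multiplication, the idea is to compare $\overline\theta_{N,x}(u)$ to its linearization, which in turn is close to $Z_{1}$ by the CLT of Proposition \ref{pp4}. To squeeze out the sharpest rate I would further decompose $U_{N,x}(u)=U_{N,x}(u_{0})+\sqrt{N}\,P_{N,x}$ and use the stronger linear-case bound from Theorem \ref{tt11}.

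Concretely, for any $\varepsilon>0$ a standard small-ball decomposition, combined with the uniform boundedness of the Gaussian density of $Z_{1}$, gives
\[
\bigl|\mathbb{P}(\overline\theta_{N,x}(u)\leq t)-\mathbb{P}(Z_{1}\leq t)\bigr|\leq \mathbb{P}(|R_{N}|>\varepsilon)+c(t,\theta)\,\varepsilon+3\,d_{K}(\mathbb{P}_{U_{N,x}(u_{0})},\mathbb{P}_{Z})+C\sqrt{N}\,\mathbb{E}|P_{N,x}|.
\]
The third term is of order $N^{-1/4}$ by \eqref{20i-1} and Theorem \ref{tt11}, and the fourth is bounded by $CN^{3/4-\beta}$ via \eqref{24s-1}, which by choosing $\beta$ close enough to $1$ is also of order $N^{-1/4}$. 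Picking $\varepsilon=\varepsilon_{N}\asymp N^{-1/4}$ then balances the remaining two contributions.

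The main obstacle is controlling $\mathbb{P}(|R_{N}|>\varepsilon_{N})$, which is delicate because $R_{N}$ contains the singular factor $\xi_{N}^{-3}\sim V_{N,x}(u)^{-3}$. I would handle this by truncation on the event $\Omega_{N}:=\{V_{N,x}(u)\geq 3/(\pi\theta)\}$: on $\Omega_{N}$ one has $|R_{N}|\leq C\,U_{N,x}(u)^{2}/\sqrt{N}$, so Chebyshev's inequality combined with the $L^{p}$ hypercontractivity bounds for $U_{N,x}(u_{0})$ (available through the proof of Proposition \ref{pp11}) and a H\"older-type extension of \eqref{24s-1} for the semilinear contribution yields $\mathbb{P}(\{|R_{N}|>\varepsilon_{N}\}\cap\Omega_{N})\leq C_{p}N^{-p/4}$ for $p$ arbitrarily large; meanwhile $\mathbb{P}(\Omega_{N}^{c})$ decays faster than any polynomial by Chebyshev applied to the $L^{2}$-estimates underlying Proposition \ref{pp4}.
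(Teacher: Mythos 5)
Your part (i) coincides with the paper's argument. For part (ii) you take a genuinely different, and much heavier, route. The paper does not linearize at all: since $\widehat{\theta}_{N,x}(u)=6/(\pi V_{N,x}(u))$ is a monotone function of $V_{N,x}(u)$, the event $\{\overline{\theta}_{N,x}(u)\leq t\}$ is \emph{exactly} the event $\left\{U_{N,x}(u)\geq \frac{6}{\pi}\frac{-t}{\theta(t/\sqrt{N}+\theta)}\right\}$, with no Taylor remainder and no truncation. The error then splits into the Kolmogorov distance between $U_{N,x}(u)$ and $Z$ (controlled through \eqref{20i-1} and \eqref{20i-2}) plus the difference of two Gaussian distribution functions evaluated at thresholds differing by $O(t^{2}/\sqrt{N})$, which the bounded Gaussian density handles. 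This exact inversion is what makes the paper's proof short; your delta method forces you to confront the second-order remainder $R_{N}$ and the singularity of $\xi_{N}^{-3}$, which the paper never sees. Your treatment of $R_{N}$ (truncation on $\Omega_{N}$, high moments of $U_{N,x}(u)$, super-polynomial decay of $\mathbb{P}(\Omega_{N}^{c})$) is sound in outline.

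There is, however, one step that does not hold as written: the additive term $C\sqrt{N}\,\mathbb{E}[|P_{N,x}|]$ in your Kolmogorov-type bound. An $L^{1}$-small perturbation of a random variable does not translate into an additive $L^{1}$ error on distribution functions; it must be passed through the same smoothing device as $R_{N}$, i.e. a term $\mathbb{P}(\sqrt{N}|P_{N,x}|>\delta)+M\delta$. With only the first-moment bound \eqref{24s-1} and $\delta\asymp N^{-1/4}$, Markov's inequality gives $\sqrt{N}\mathbb{E}[|P_{N,x}|]/\delta\leq CN^{1-\beta}$, which diverges. The repair is the device you already invoke for $R_{N}$: a moment bound $\Vert\sqrt{N}P_{N,x}\Vert_{L^{p}}\leq C_{p}N^{3/4-\beta}$ for all $p$ (Minkowski together with \eqref{20a-3} and \eqref{20a-1}), then Chebyshev with large $p$ at scale $\delta=N^{-(\beta-3/4)+\varepsilon}$; this costs $N^{-(\beta-3/4)+\varepsilon}$ rather than $N^{-1/4}$, so, as with your other semilinear terms, you only reach the rate $N^{-1/4+\varepsilon}$ by taking $\beta$ close to $1$. (The paper's own proof faces the same tension, since \eqref{20i-1} applied to \eqref{20i-2} yields $N^{-(\beta-3/4)/2}$; your idea of isolating the linear part $U_{N,x}(u_{0})$, for which Theorem \ref{tt11} gives $d_{W}\leq CN^{-1/2}$ and hence $d_{K}\leq CN^{-1/4}$, is in fact the sharper way to treat that piece.)
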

	\begin{proof}
		Assertion (i) is a direct consequence of Proposition \ref{pp4}, so let us prove (ii).
		Let $t, x\in \mathbb{R}$ and $N\geq1$. 
		We write 
		\begin{equation*}
			\begin{aligned}
				\mathbb{P} ( \overline{\theta}_{N, x}(u) \leq t) 
				&= \mathbb{P} ( \widehat{\theta}_{N, x}(u) \leq \frac{t}{\sqrt{N}}+\theta)
				= \mathbb{P} \left( V_{N, x} (u) \geq \frac{6}{ \pi \left( \frac{t}{\sqrt{N}}+\theta \right)}\right)\\ 
				&= \mathbb{P} \left( U_{N, x}(u)\geq \frac{6}{\pi} \frac{-t}{\theta \left( \frac{t}{\sqrt{N}}+\theta \right)}\right).
			\end{aligned}
		\end{equation*}
		Thus
		\begin{equation*}
			\begin{aligned}
				&\left| \mathbb{P} ( \overline{\theta}_{N, x}(u) \leq t) - \mathbb{P}( Z_{1}\leq t) \right|
				= \left| \mathbb{P} \left( U_{N, x}(u)\geq \frac{6}{\pi} \frac{-t}{\theta \left( \frac{t}{\sqrt{N}}+\theta \right)}\right)-\mathbb{P}( Z_{1}\geq -t)\right| \\
				&= \left| \mathbb{P} \left( U_{N, x}(u)\leq \frac{6}{\pi} \frac{-t}{\theta \left( \frac{t}{\sqrt{N}}+\theta \right)}\right)-\mathbb{P}\left( \frac{\pi\theta^2}{6}Z \leq -t\right) \right|\\
				&\leq \left| \mathbb{P} \left( U_{N, x}(u)\leq \frac{6}{\pi} \frac{-t}{\theta \left( \frac{t}{\sqrt{N}}+\theta \right)}\right)-\mathbb{P} \left(Z\leq \frac{6}{\pi} \frac{-t}{\theta \left( \frac{t}{\sqrt{N}}+\theta \right)}\right)\right|\\
				& \quad + \left| \mathbb{P} \left( Z\leq \frac{6}{\pi} \frac{-t}{\theta \left( \frac{t}{\sqrt{N}}+\theta \right)}\right)-\mathbb{P}\left( \frac{\pi\theta^2}{6}Z \leq -t\right) \right|. 
			\end{aligned}
		\end{equation*}
		By majorizing the first summand from above by the Kolmogorov distance, we obtain, if $\tilde{Z} \sim N(0, 1)$ and $d_{K}$ is the Kolmogorov distance given by (\ref{dkol}),
		\begin{eqnarray*}
			&&\left| \mathbb{P} ( \overline{\theta}_{N, x} \leq t) - \mathbb{P}( Z_{1}\leq t) \right|\\
			&&\leq d_{K} \left( \mathbb{P}_{ U_{N, x}(u)}, \mathbb{P}_{ \tilde{Z}}\right) + \left| \mathbb{P} \left( \tilde{Z}\leq\frac{6}{\pi \sigma_{\theta}} \frac{-t}{\theta \left( \frac{t}{\sqrt{N}}+\theta \right)}\right)-\mathbb{P} \left( \tilde{Z} \leq \frac{ 6(-t) }{\pi \theta ^{2} \sigma _{\theta}}\right)\right|\\
			&&\leq  d_{K} \left( \mathbb{P}_{ U_{N, x}(u)}, \mathbb{P}_{ \tilde{Z}}\right) +\min \left( 1, \frac{1}{\sqrt{2\pi}} \left|\frac{6}{\pi \sigma_{\theta}} \frac{-t}{\theta \left( \frac{t}{\sqrt{N}}+\theta \right)}- \frac{ 6(-t) }{\pi \theta ^{2} \sigma _{\theta}}\right|\right),
		\end{eqnarray*}
		and, since $\sigma_{\theta} ^{2} =\frac{384}{\pi ^{2} \theta ^{2}}$ (see (\ref{s2})), we get 
		\begin{eqnarray*}
			\left| \mathbb{P} ( \overline{\theta}_{N, x} \leq t) - \mathbb{P}( Z_{1}\leq t) \right|
			&&\leq d_{K} \left( \mathbb{P}_{ U_{N, x}(u)}, \mathbb{P}_{ \tilde{Z}}\right) +\min \left( 1, \frac{1}{\sqrt{N}}\frac{6t^{2}}{\sqrt{2\pi  384}\theta \left| \frac{ t}{\sqrt{N}}+\theta \right| }\right)\\
			&&\leq 2d_{W} \left( \mathbb{P}_{ U_{N, x}(u)}, \mathbb{P}_{ \tilde{Z}}\right) +\min \left( 1, \frac{1}{\sqrt{N}}\frac{6t^{2}}{\sqrt{2\pi  384}\theta \left| \frac{ t}{\sqrt{N}}+\theta \right| }\right)\\
			&&\leq c(t,\theta) N ^{-\frac{1}{4}},
		\end{eqnarray*}
		where we used (\ref{20i-1}) and (\ref{20i-2}). 
	\end{proof}
	
	\subsection{Asymptotic independence for the viscosity parameter estimator}
	We now analyze the asymptotic independence between the estimator \eqref{est} and the observation vector \eqref{ynx}. 
	We cannot derive their asymptotic independence in the Wasserstein sense due to the difficulty to control the moments or the Malliavin derivative of the random variable $\widehat{\theta}_{N, x}(u)$. 
	Therefore, we need to introduce some weaker versions of asymptotic independence, as follows: 
	\begin{definition}\label{def2}
		Let $ (X_{N}, N\geq 1)$ be a real-valued sequence of random variables and let $Y_{N}$ be an $m(N)$-dimensional random vector, $N\geq 1$. 
		Assume $ X_{N}\overset{(d)}{\rightarrow}U$ for some random variable $U$. 
		Then $(X_N, N\geq 1)$ and $(Y_N, N\geq 1)$ are called
		\begin{enumerate}
			\item[i)] $K$-asymptotically independent if
			$
			\lim\limits_{N\to \infty} d_K(\mathbb{P}_{ X_{N}}, \mathbb{P}_{ Y_{N}})=0,
			$
			where $d_K$ is the Kolmogorov distance;
			more precisely, using the notation $\bar{t}_N:=\left(t_1,\dots t_{m(N)}\right)\in \mathbb{R}^{m(N)}$,
			\begin{equation*}
				d_K\left(\mathbb{P}_{ X_{N}}, \mathbb{P}_{ Y_{N}}\right):=	\sup_{t\in \mathbb{R}, \bar{t}_N\in \mathbb{R}^{m(N)}}
				\left| \mathbb{P} \left(X_{N}\leq t, Y_N\leq \bar{t}_N\right) - \mathbb{P}( U\leq t) \mathbb{P} \left(Y_N\leq \bar{t}_N\right)\right|, 
			\end{equation*}
			\item[ii)] $DK$-asymptotically independent if for every $t\in\mathbb{R}$,
			\begin{equation*}
				\lim_N\sup_{\bar{t}_N\in \mathbb{R}^{m(N)}}
				\left| \mathbb{P} \left(X_{N}\leq t, Y_N\leq \bar{t}_N\right) - \mathbb{P}( U\leq t) \mathbb{P}\left(Y_N\leq \bar{t}_N\right)\right|=0. 
			\end{equation*}
			\item[iii)] $ K_{1}$-asymptotically independent if 
			$
			\lim\limits_{N} d_{K_1}(\mathbb{P}_{ X_{N}}, \mathbb{P}_{ Y_{N}})=0,
			$
			where     
			\begin{equation*}
				\begin{aligned}
					d_{K_{1}}(\mathbb{P}_{ X_{N}}, \mathbb{P}_{Y_{N}})
					&:=	\sup_{t\in \mathbb{R}} \sup_{f\in \mathcal{A}_{N}}\left|\mathbb{E}\left[ 1_{(-\infty, t]}	(X_{N}) f(Y_{N})\right]- \mathbb{P}( U\leq t) \mathbb{E}\left[ f(Y_{N})\right]\right|\\
					\mathcal{A}_{N}&:=\left\{f: \mathbb{R} ^{m(N)}\to \mathbb{R} : \Vert f\Vert _{Lip}\leq 1, \Vert f\Vert _{\infty}\leq 1\right\}.
				\end{aligned}
			\end{equation*}
			\item[iv)] $DK_{1}$-asymptotically independent if for every $t\in \mathbb{R}$, 
			\begin{equation*} 
				\lim\limits_{N\to \infty}\sup_{f\in \mathcal{A}_{N} }
				\left| \mathbb{E}\left[ 1_{(-\infty, t]}	(X_{N}) f(Y_{N})\right]- \mathbb{P}(U\leq t) \mathbb{E}\left[ f(Y_{N})\right]\right|=0.
			\end{equation*}
		\end{enumerate}
	\end{definition}
	
	\begin{remark}
		Clearly, $K$(resp. $K_{1}$)-asymptotic independence implies the $DK$(resp. $DK_{1}$)-asymptotic independence. 
	\end{remark}
	
	Let us show that the  convergence in the Wasserstein distance implies the convergence under the above $DK_{1}$-distance. 
	\begin{prop}\label{pp33}
		Let $ (X_{N}, N\geq 1)$ and $(Y_{N}, N\geq 1)$ be two sequence of random variables with $X_N$ taking values in $\mathbb{R}$ and $Y_N$ taking values in $\mathbb{R} ^{m(N)}$, respectively. Let $Z$ be a random variable with density bounded by $M>0$. Then 
		\begin{eqnarray*}
			d_{K_{1}}(\mathbb{P}_{ (X_{N}, Y_{N})}, \mathbb{P}_{Z}\otimes \mathbb{P}_{ Y_{N}})&\leq& 6\sqrt{M} \sqrt{ d_{W} \left( \mathbb{P}_{ (X_{N}, Y_{N})}, \mathbb{P}_{ Z}\otimes \mathbb{P}_{ Y _{N}}\right) +2 d_{W} (\mathbb{P}_{ X_{N}}, \mathbb{P}_{Z})}\\
			&&+ d_{W} (\mathbb{P}_{ (X_{N}, Y_{N})}, \mathbb{P}_{Z}\otimes \mathbb{P}_{ Y_{N}}), \quad N\geq 1.
		\end{eqnarray*}
	\end{prop}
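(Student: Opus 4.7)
The plan is to reduce the $d_{K_1}$ estimate to the Wasserstein setting by regularizing the non-Lipschitz indicator $1_{(-\infty,t]}$. Fix $t\in \mathbb{R}$ and $f\in \mathcal{A}_{N}$, and for $\varepsilon>0$ introduce the $\frac{1}{\varepsilon}$-Lipschitz function $\psi_{t,\varepsilon}:\mathbb{R}\to[0,1]$ which equals $1$ on $(-\infty,t]$, vanishes on $[t+\varepsilon,\infty)$, and is affine in between. Writing $A:=d_W(\mathbb{P}_{(X_N,Y_N)},\mathbb{P}_Z\otimes \mathbb{P}_{Y_N})$ and $B:=d_W(\mathbb{P}_{X_N},\mathbb{P}_Z)$, the target bound should emerge from optimizing $\varepsilon$ once we have estimated each piece of a standard three-term decomposition.

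Concretely, I would decompose
\begin{equation*}
\mathbb{E}\left[1_{(-\infty,t]}(X_N)f(Y_N)\right] - \mathbb{P}(Z\leq t)\mathbb{E}[f(Y_N)] = I_1 + I_2 + I_3,
\end{equation*}
where $I_1:=\mathbb{E}[(1_{(-\infty,t]}(X_N)-\psi_{t,\varepsilon}(X_N))f(Y_N)]$, $I_2:=\mathbb{E}[\psi_{t,\varepsilon}(X_N)f(Y_N)]-\mathbb{E}[\psi_{t,\varepsilon}(Z)]\mathbb{E}[f(Y_N)]$, and $I_3:=\mathbb{E}[\psi_{t,\varepsilon}(Z)-1_{(-\infty,t]}(Z)]\mathbb{E}[f(Y_N)]$. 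Since $|f|\leq 1$ and $\psi_{t,\varepsilon}-1_{(-\infty,t]}$ is supported in $(t,t+\varepsilon]$, the density bound on $Z$ directly gives $|I_3|\leq M\varepsilon$. For $I_2$, the product $(x,y)\mapsto \psi_{t,\varepsilon}(x)f(y)$ is Lipschitz on $\mathbb{R}^{1+m(N)}$ (with Euclidean norm) with constant at most $\frac{1}{\varepsilon}+1$, using $\|f\|_\infty, \|f\|_{Lip}\leq 1$ and $\|\psi_{t,\varepsilon}\|_\infty\leq 1$. Hence $|I_2|\leq \left(\frac{1}{\varepsilon}+1\right)A$.

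The key step is bounding $I_1$, which reduces to estimating $\mathbb{P}(t<X_N\leq t+\varepsilon)$. Here the density bound is only available for $Z$, not for $X_N$, so I would transfer it via the standard inequality $d_K(\mathbb{P}_{X_N},\mathbb{P}_Z)\leq 2\sqrt{MB}$: smoothing $1_{(-\infty,s]}$ by a Lipschitz approximation with width $a$ and optimizing in $a$ yields $d_K\leq \inf_{a>0}\left(\frac{B}{a}+Ma\right)=2\sqrt{MB}$. Writing $\mathbb{P}(t<X_N\leq t+\varepsilon)=[\mathbb{P}(X_N\leq t+\varepsilon)-\mathbb{P}(Z\leq t+\varepsilon)]+\mathbb{P}(t<Z\leq t+\varepsilon)-[\mathbb{P}(X_N\leq t)-\mathbb{P}(Z\leq t)]$ gives $|I_1|\leq M\varepsilon + 2d_K(\mathbb{P}_{X_N},\mathbb{P}_Z)\leq M\varepsilon+4\sqrt{MB}$.

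Finally, assembling the three bounds produces $|I_1+I_2+I_3|\leq 2M\varepsilon + 4\sqrt{MB} + \frac{A}{\varepsilon} + A$ (uniformly in $t$ and $f$). The choice $\varepsilon=\sqrt{A/(2M)}$ optimizes $2M\varepsilon+\frac{A}{\varepsilon}$ to $2\sqrt{2MA}$, and then the elementary inequality $\sqrt{2A}+2\sqrt{B}\leq 2\sqrt{A+2B}$ (obtained from $(\alpha+\beta)^2\leq 2(\alpha^2+\beta^2)$) yields $2\sqrt{2MA}+4\sqrt{MB}\leq 4\sqrt{M(A+2B)}$, so that $d_{K_1}(\mathbb{P}_{(X_N,Y_N)},\mathbb{P}_Z\otimes \mathbb{P}_{Y_N})\leq 4\sqrt{M}\sqrt{A+2B}+A$, which is stronger than the stated bound with constant $6$. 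The only nontrivial obstacle is the treatment of $I_1$; the rest is routine Lipschitz regularization and convex optimization in $\varepsilon$.
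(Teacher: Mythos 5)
Your proof is correct, and its first half coincides with the paper's: both regularize the indicator by the same one-sided ramp $\varphi_{t,\varepsilon}$, bound the cross term by $\left(1+\tfrac{1}{\varepsilon}\right)d_{W}\left(\mathbb{P}_{(X_{N},Y_{N})},\mathbb{P}_{Z}\otimes\mathbb{P}_{Y_{N}}\right)$ using that $\varphi_{t,\varepsilon}f$ is $\left(1+\tfrac{1}{\varepsilon}\right)$-Lipschitz, and are left with controlling $\mathbb{P}\left(X_{N}\in(t,t+\varepsilon]\right)$ plus the analogous (easy) term for $Z$. The two arguments diverge precisely at that remaining term. The paper introduces a second, trapezoidal bump $\Psi_{t,\varepsilon}$ dominating $1_{[t,t+\varepsilon]}$, compares $\mathbb{E}\left[\Psi_{t,\varepsilon}(X_{N})\right]$ to $\mathbb{E}\left[\Psi_{t,\varepsilon}(Z)\right]$ at cost $\tfrac{2}{\varepsilon}d_{W}(\mathbb{P}_{X_{N}},\mathbb{P}_{Z})$, and then performs a \emph{single} optimization of $h(\varepsilon)=\left(1+\tfrac{1}{\varepsilon}\right)A+\tfrac{2}{\varepsilon}B+9\varepsilon M$, which is what produces the constant $6\sqrt{M}$ and the combination $A+2B$ under the square root. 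You instead transfer the density bound to $X_{N}$ through the smoothing inequality $d_{K}(\mathbb{P}_{X_{N}},\mathbb{P}_{Z})\leq 2\sqrt{MB}$ (a separate, nested optimization in the smoothing width $a$), which decouples the $B$-contribution from $\varepsilon$; you then optimize only $2M\varepsilon+\tfrac{A}{\varepsilon}$ and reassemble via $\sqrt{2A}+2\sqrt{B}\leq 2\sqrt{A+2B}$. Both routes are valid; yours yields the marginally sharper constant $4\sqrt{M}$ (indeed the intermediate bound $A+2\sqrt{2MA}+4\sqrt{MB}$ is sharper still), at the price of an extra layer of optimization, while the paper's single-$\varepsilon$ bookkeeping is more compact. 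One cosmetic remark: the statement's $d_{K_{1}}$ between $\mathbb{P}_{(X_{N},Y_{N})}$ and $\mathbb{P}_{Z}\otimes\mathbb{P}_{Y_{N}}$ should be read with $\mathbb{P}(Z\leq t)$ in the second term, as you do; this is the intended meaning even though the paper's proof momentarily writes $\mathbb{P}(X_{N}\leq t)$.
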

	\begin{proof}
		For $ t\in \mathbb{R}$ and $\eps>0$, we consider the function 
		\begin{equation*}
			\varphi_{t, \eps} (x)=\begin{cases}
				1, \mbox{ if } x\leq t\\\frac{t+\eps-x}{\eps}, \mbox{ if } t<x< t+\eps\\
				0, \mbox{ if } x\geq t+\eps,
			\end{cases}
		\end{equation*}
		so that $\Vert 	\varphi_{t, \eps}\Vert _{Lip}=\frac{1}{\eps}$, $ \Vert 	\varphi_{t, \eps}\Vert _{\infty}\leq 1$.  For $N \geq 1$  and $ f\in \mathcal{A}_{N}$, let
		\begin{equation*}
			E_{t}(N):= \left| \mathbb{E}\left[ 1_{(-\infty, t]}	(X_{N}) f(Y_{N})\right]- \mathbb{P}( X_{N}\leq t) \mathbb{E} \left[f(Y_{N})\right]\right|.
		\end{equation*}
		We bound this quantity as follows:
		\begin{eqnarray*}
			E_{t}(N) &\leq & \left| \mathbb{E} \left[\varphi_{t, \eps} (X_{N}) f( Y_{N})\right] - \mathbb{E} \left[\varphi_{t, \eps}(X_{N})\right] \mathbb{E} \left[f(Y_{N})\right] \right| \\
			&&+ \mathbb{E} \left[(\varphi_{t, \eps}-1_{(-\infty, t]}	)(X_{N} )\vert f\vert (Y_{N})\right] +  \mathbb{E} \left[(\varphi_{t, \eps}-1_{(-\infty, t]}	)(Z)\vert f\vert (Y_{N})\right].
		\end{eqnarray*}
		Since $f\in \mathcal{A}_{N}$, we notice that $\Vert \varphi_{t, \eps}f\Vert_{Lip}\leq \frac{1}{\eps}+1. $ Hence 
		\begin{eqnarray}
			E_{t}(N) \leq \frac{\eps+1}{\eps} d_{W} \left( \mathbb{P}_{ (X_{N}, Y_{N})}, \mathbb{P}_{Z}\otimes \mathbb{P}_{ Y_{N}}\right)+ 2\mathbb{P} ( X_{N} \in [t, t+\eps])+ 2\mathbb{P} (Z \in [t, t+\eps]).  \label{20i-4}
		\end{eqnarray}
		Let us now define the function $\Psi_{t, \eps}$ given by 
		\begin{equation*}
			\Psi_{t, \eps}(x) = \begin{cases}
				0, \mbox{ if } x\leq t-\eps \\
				\frac{x-t+\eps}{\eps}, \mbox{ if } x \in [t-\eps, t]\\
				1, \mbox{ if } x\in [t, t+\eps]\\
				\frac{ t+\eps -x}{\eps}, \mbox{ if } x \in [t+\eps, t+2\eps]\\
				0, \mbox{ if } x\geq t+2\eps.
			\end{cases}
		\end{equation*}
		This function satisfies $\Vert \Psi_{t, \eps} \Vert _{\infty} \leq 1$ and $\Vert \Psi_{t, \eps} \Vert_ {Lip}=\frac{1}{\eps}$. 
		Thus, (\ref{20i-4}) becomes 
		\begin{eqnarray*}
			E_{t}(N) 
			&&\leq  \frac{\eps+1}{\eps} d_{W} \left( \mathbb{P}_{ (X_{N}, Y_{N})}, \mathbb{P}_{Z}\otimes \mathbb{P}_{ Y_{N}}\right)+2\mathbb{E} \left[\Psi_{t, \eps}(X_{N})\right]+2\mathbb{P} (Z \in [t, t+\eps])\\
			&&\leq  \frac{\eps+1}{\eps} d_{W} \left( \mathbb{P}_{ (X_{N}, Y_{N})}, \mathbb{P}_{Z}\otimes \mathbb{P}_{ Y_{N}}\right)+2 \left| \mathbb{E} \left[\Psi_{t, \eps}(X_{N})\right] -\mathbb{E} \left[\Psi_{t, \eps}(Z)\right]\right| +  3\mathbb{E} \left[\Psi_{t, \eps}(Z)\right]\\
			&&\leq  \frac{\eps+1}{\eps} d_{W} \left( \mathbb{P}_{ (X_{N}, Y_{N})}, \mathbb{P}_{Z}\otimes P_{ Y_{N}}\right)+\frac{2}{\eps} d_{W} ( \mathbb{P}_{X_{N}}, \mathbb{P}_{Z}) + 3 \mathbb{P} (Z \in [t-\eps, t+2\eps])\\
			&&\leq\left( 1+\frac{1}{\eps}\right) d_{W} \left( \mathbb{P}_{ (X_{N}, Y_{N})}, \mathbb{P}_{Z}\otimes \mathbb{P}_{ Y_{N}}\right)+ \frac{2}{\eps} d_{W} ( \mathbb{P}_{X_{N}}, P_{Z}) + 9\eps M=: h(\eps). 
		\end{eqnarray*}
		So 
		\begin{equation*}
			E_{t}(N) \leq \inf _{\eps>0} h(\eps), \quad N\geq 1.
		\end{equation*}
		A straightforward computation shows that the above infimum is attained for 
		\begin{equation*}
			\eps:= \left( \frac{  d_{W} \left( \mathbb{P}_{ (X_{N}, Y_{N})}, \mathbb{P}_{Z}\otimes \mathbb{P}_{ Y_{N}}\right)+ 2 d_{W} ( \mathbb{P}_{X_{N}}, \mathbb{P}_{Z}) }{9M}\right) ^{\frac{1}{2}},
		\end{equation*}
		and this leads to the desired conclusion. \end{proof}
	
	As a consequence, we deduce a quantitative asymptotic independence of the quartic variation and the data under the $d_{K_{1}}$ distance (recall that the $W$-asymptotic independence has been quantified in \Cref{tt3}).
	\begin{corollary}\label{cor11}
		The following assertions hold.
		\begin{enumerate}
			\item[(i)] Let the notation of Theorem \ref{tt2} prevail. 
			Then there exists a constant $C>0$ such that
			\begin{equation*}
				d_{K_{1}}\left( \mathbb{P}_{ (U_{N,x}(u_{0}), \mathbf{Y}_{N, x}(u_{0}))}, \mathbb{P}_{Z}\otimes \mathbb{P}_{ \mathbf{Y}_{N,x}(u_{0})}\right)\leq C \left( \frac{m(N)}{N}\right) ^{\frac{1}{4}}, \quad N\geq 1.
			\end{equation*}
			\item[(ii)] Let the notation of Theorem \ref{tt3} prevail. 
			Then for every $\beta\in(3/4,1)$ there exists a constant $C>0$ such that
			\begin{equation*}
				\begin{aligned}
					&d_{K_{1}}\left(\mathbb{P}_{ (U_{N,x}(u), \mathbf{Y}_{N, x}(u))}, \mathbb{P}_{Z}\otimes \mathbb{P}_{ \mathbf{Y}_{N,x}(u)}\right)\\
					&\quad \leq C\left[\left( \frac{ m(N)}{N ^{\beta}}\right) ^{\frac{1}{2}} + \left( \frac{ m(N)}{N ^{\beta}}\right) ^{\frac{1}{4}} +\left( \frac{1}{N}\right) ^{\left(\beta-\frac{3}{4}\right)/4}\right], \quad N\geq 1.
				\end{aligned}
			\end{equation*}
		\end{enumerate}
	\end{corollary}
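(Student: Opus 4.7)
The plan is to obtain both statements as direct consequences of Proposition \ref{pp33}, feeding in the Wasserstein bounds already proved in Theorems \ref{tt11}, \ref{tt2}, \ref{tt3} and Proposition \ref{pp4}. The limit $Z\sim N(0,\sigma_{\theta}^{2})$ has a bounded density (with $M:=(2\pi\sigma_{\theta}^{2})^{-1/2}<\infty$), so the hypothesis of Proposition \ref{pp33} is satisfied.

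For part (i), I would apply Proposition \ref{pp33} to $X_{N}=U_{N,x}(u_{0})$ and $Y_{N}=\mathbf{Y}_{N,x}(u_{0})$. This yields
\begin{equation*}
d_{K_{1}}\!\left(\mathbb{P}_{(U_{N,x}(u_{0}),\mathbf{Y}_{N,x}(u_{0}))},\mathbb{P}_{Z}\otimes\mathbb{P}_{\mathbf{Y}_{N,x}(u_{0})}\right)
\leq 6\sqrt{M}\sqrt{A_{N}+2B_{N}}+A_{N},
\end{equation*}
where $A_{N}:=d_{W}(\mathbb{P}_{(U_{N,x}(u_{0}),\mathbf{Y}_{N,x}(u_{0}))},\mathbb{P}_{Z}\otimes\mathbb{P}_{\mathbf{Y}_{N,x}(u_{0})})$ and $B_{N}:=d_{W}(\mathbb{P}_{U_{N,x}(u_{0})},\mathbb{P}_{Z})$. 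Theorem \ref{tt2} gives $A_{N}\leq C\sqrt{m(N)/N}$ and Theorem \ref{tt11} gives $B_{N}\leq C/\sqrt{N}$. Since $m(N)\geq 1$, $B_{N}\leq A_{N}$, hence $A_{N}+2B_{N}\leq 3A_{N}\leq C'\sqrt{m(N)/N}$. Taking square roots and noting that $\sqrt{m(N)/N}\leq (m(N)/N)^{1/4}$ (because $m(N)/N\leq 1$) produces the claimed rate $(m(N)/N)^{1/4}$.

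For part (ii), the same template applies with $X_{N}=U_{N,x}(u)$ and $Y_{N}=\mathbf{Y}_{N,x}(u)$. Theorem \ref{tt3} yields
\begin{equation*}
A_{N}\leq C\left[\bigl(m(N)/N\bigr)^{1/2}+m(N)/N^{\beta}+N^{-(\beta-3/4)}\right],
\end{equation*}
while Proposition \ref{pp4} gives $B_{N}\leq CN^{-(\beta-3/4)}$. Since $\beta<1$, $\sqrt{m(N)/N}\leq\sqrt{m(N)/N^{\beta}}$, so $A_{N}+2B_{N}$ is bounded by $C''[\sqrt{m(N)/N^{\beta}}+m(N)/N^{\beta}+N^{-(\beta-3/4)}]$. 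Applying the subadditivity of $\sqrt{\cdot}$ and collecting the three terms together with $A_{N}$ itself, I obtain exactly the three blocks in the statement: a term $(m(N)/N^{\beta})^{1/4}$ from the square root of $\sqrt{m(N)/N^{\beta}}$, a term $(m(N)/N^{\beta})^{1/2}$ from the square root of $m(N)/N^{\beta}$ and from $A_{N}$, and a power of $1/N$ involving $\beta-3/4$; the weaker exponent $(\beta-3/4)/4$ in the statement is an upper bound for the sharper $(\beta-3/4)/2$ coming out of the computation (since $\beta-3/4>0$ makes $(1/N)^{(\beta-3/4)/2}\leq (1/N)^{(\beta-3/4)/4}$).

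There is no genuine obstacle: the only care required is algebraic, namely comparing the various powers of $m(N)/N$, $m(N)/N^{\beta}$ and $1/N$ so as to absorb each term into one of the three displayed in the corollary. The key conceptual input is the Wasserstein-to-$d_{K_{1}}$ passage contained in Proposition \ref{pp33}, whose quadratic dependence on $d_{W}$ explains why every rate of convergence in the Wasserstein distance gets square-rooted when transferred to the $d_{K_{1}}$ scale.
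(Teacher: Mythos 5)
Your proposal is correct and is exactly the paper's proof: the paper also obtains the corollary by plugging the Wasserstein bounds of Theorems \ref{tt11}, \ref{tt2}, \ref{tt3} and Proposition \ref{pp4} into Proposition \ref{pp33}. The only point you gloss over is that the first-power term $m(N)/N^{\beta}$ contributed by $A_{N}$ in part (ii) is dominated by $(m(N)/N^{\beta})^{1/2}$ only when $m(N)/N^{\beta}\leq 1$; in the opposite case the claimed bound is bounded below by a positive constant and the inequality holds trivially because $d_{K_{1}}\leq 2$.
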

	\begin{proof}
		The proof follows immediately from \Cref{pp33}, \Cref{tt2} and \Cref{tt3}.    
	\end{proof}
	
	We are now in the position to derive the  $DK_1$-asymptotic independence for the viscosity  parameter with respect to the observed data, as introduced in \Cref{def2}, (iv).
	\begin{coro}\label{coro:K1-ai-theta}
		Let $x\in \mathbb{R}$,  $\overline{\theta}_{N, x}(u)$ be given by \eqref{21i-1}, and let $ (\mathbf{Y}_{N, x}(u), N\geq 1)$ be given by \eqref{ynx}.  
		Furthermore, let $\mathcal{A}_{N}$ be as in \Cref{def2} and $Z_{1}\sim N(0, \sigma _{1, \theta}^{2})$. 
		Then, for every $\beta\in(3/4,1)$ and every $t\in \mathbb{R}$, there exists $c(t, \theta)>0$ such that 
		\begin{equation}
			\begin{aligned}
				&\sup_{f\in \mathcal{A}_{N} }
				\left| \mathbb{E}\left[ 1_{(-\infty, t]}	(\overline{\theta}_{N,x}) f(\mathbf{Y}_{N,x}(u))\right]-\mathbb{P}(Z_{1}\leq t)\mathbb{E}\left[ f(\mathbf{Y}_{N,x}(u))\right]\right|\\
				&\quad \leq c(t,\theta)\left[\left( \frac{ m(N)}{N ^{\beta}}\right) ^{\frac{1}{2}} + \left( \frac{ m(N)}{N ^{\beta}}\right) ^{\frac{1}{4}} +\left( \frac{1}{N}\right) ^{\left(\beta-\frac{3}{4}\right)/4}\right], \quad N\geq 1.
			\end{aligned}
		\end{equation}
		In particular, if $\lim_{N\to \infty}\frac{m(N)}{N^\gamma}=0$ for some $\gamma\in (0,1)$, then the random sequences $(\overline{\theta}_{N, x}(u), N\geq 1)$ and $(\mathbf{Y}_{N, x}(u), N\geq 1)$ are $DK_{1}$-asymptotically independent.
	\end{coro}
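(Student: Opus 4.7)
The plan is to reduce the quantity in question to the $d_{K_{1}}$-bound for the pair $(U_{N,x}(u),\mathbf{Y}_{N,x}(u))$ provided by \Cref{cor11}(ii). First, for $N$ large enough so that $t/\sqrt{N}+\theta>0$, a direct algebraic manipulation shows that the inequality $\overline{\theta}_{N,x}(u)\le t$ is equivalent to $U_{N,x}(u)\ge s_{N}$, where
$$s_{N}:=\frac{-6t}{\pi\theta\,(t/\sqrt{N}+\theta)}\longrightarrow s:=\frac{-6t}{\pi\theta^{2}}\quad\text{as }N\to\infty,$$
with $|s_{N}-s|\le c(t,\theta)/\sqrt{N}$. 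Moreover, since $Z_{1}$ has the same law as $(\pi\theta^{2}/6)Z$, the symmetry and continuity of $Z$ yield $\mathbb{P}(Z_{1}\le t)=\mathbb{P}(Z\ge s)=1-\mathbb{P}(Z\le s)$. The quantity inside the supremum therefore rewrites, for $f\in\mathcal{A}_{N}$, as
$$\bigl|\mathbb{E}\bigl[1_{(-\infty,s_{N})}(U_{N,x}(u))f(\mathbf{Y}_{N,x}(u))\bigr]-\mathbb{P}(Z\le s)\,\mathbb{E}[f(\mathbf{Y}_{N,x}(u))]\bigr|.$$

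Next, I would add and subtract $\mathbb{E}[1_{(-\infty,s]}(U_{N,x}(u))f(\mathbf{Y}_{N,x}(u))]$ to split this into two pieces. The piece $\bigl|\mathbb{E}[1_{(-\infty,s]}(U_{N,x}(u))f(\mathbf{Y}_{N,x}(u))]-\mathbb{P}(Z\le s)\mathbb{E}[f(\mathbf{Y}_{N,x}(u))]\bigr|$ is bounded, uniformly in $f\in\mathcal{A}_{N}$ and in the threshold $s\in\mathbb{R}$, by $d_{K_{1}}(\mathbb{P}_{(U_{N,x}(u),\mathbf{Y}_{N,x}(u))},\mathbb{P}_{Z}\otimes\mathbb{P}_{\mathbf{Y}_{N,x}(u)})$, which in turn is controlled by the right-hand side appearing in the statement thanks to \Cref{cor11}(ii).

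The remaining piece is at most $\|f\|_{\infty}\mathbb{P}(U_{N,x}(u)\in I_{N})\le \mathbb{P}(U_{N,x}(u)\in I_{N})$, where $I_{N}$ denotes the interval with endpoints $s_{N}$ and $s$. Using the boundedness $M$ of the density of $Z$ and a standard comparison through the Kolmogorov distance,
$$\mathbb{P}(U_{N,x}(u)\in I_{N})\le M\,|I_{N}|+2\,d_{K}(\mathbb{P}_{U_{N,x}(u)},\mathbb{P}_{Z}).$$
Combining \eqref{20i-1} with the Wasserstein bound \eqref{20i-2} from \Cref{pp4}, this piece is of order $c(t,\theta)\bigl[N^{-1/2}+N^{-(\beta-3/4)/2}\bigr]$, which is strictly dominated by the term $N^{-(\beta-3/4)/4}$ already present in the bound from \Cref{cor11}(ii) (since $\beta<1$). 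Assembling the two pieces yields the announced inequality. The $DK_{1}$-asymptotic independence is then immediate: if $m(N)/N^{\gamma}\to 0$ for some $\gamma\in(0,1)$, one selects $\beta\in(\max(\gamma,3/4),1)$, so that $m(N)/N^{\beta}=N^{\gamma-\beta}\cdot m(N)/N^{\gamma}\to 0$, whence each term in the displayed bound vanishes.

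The main obstacle is not conceptual but book-keeping: the conversion from $1_{(-\infty,t]}(\overline{\theta}_{N,x}(u))$ to an indicator of $U_{N,x}(u)$ introduces an $N$-dependent threshold $s_{N}$, and the error created by replacing $s_{N}$ with its limit $s$ has to be absorbed using the density bound for the Gaussian limit together with the Kolmogorov-to-Wasserstein comparison for the quartic variation; the advantage of this strategy is that it bypasses any need to control moments or Malliavin derivatives of the nonlinear functional $\widehat{\theta}_{N,x}(u)=6/(\pi V_{N,x}(u))$ directly.
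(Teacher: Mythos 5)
Your proof is correct and follows essentially the same route as the paper: rewrite $\{\overline{\theta}_{N,x}(u)\le t\}$ as $\{U_{N,x}(u)\ge s_N\}$ with $s_N=-6t/[\pi\theta(t/\sqrt{N}+\theta)]$ and invoke the $d_{K_1}$ bound of \Cref{cor11}(ii). The only (harmless) difference lies in how the $N$-dependent threshold is absorbed: the paper compares $\mathbb{P}(Z\ge s_N)$ directly with $\mathbb{P}(Z_1\le t)$ via the Gaussian density (the estimate already carried out in the proof of \Cref{tt5}), which makes your extra anti-concentration step for $U_{N,x}(u)$ unnecessary --- since $d_{K_1}$ already takes a supremum over all thresholds, you could have applied it at $s_N$ itself rather than shifting to the limit $s$.
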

	\begin{proof}
		For $t, x\in \mathbb{R}$ and $ N\geq 1$, we have
		\begin{equation*}
			\left( \overline{\theta}_{N, x}(u)\leq t\right) = \left( U_{N, x}(u)\geq \sqrt{N} \frac{ 6(-t) }{\pi \theta \left( \frac{t}{\sqrt{N}}+\theta \right) }\right).
		\end{equation*}
		So, for every $f\in \mathcal{A}_{N}$, 
		\begin{eqnarray*}
			&&\left| \mathbb{E}\left[ 1_{(-\infty, t]}	(\overline{\theta}_{N,x}) f(\mathbf{Y}_{N,x}(u))\right]-\mathbb{P}(Z_{1}\leq t)\mathbb{E}\left[ f(\mathbf{Y}_{N,x}(u))\right]\right| \\
			&=& \left| \mathbb{E}\left[ 1_{ \left[   -6t/\left[\pi \theta \left( \frac{t}{\sqrt{N}}+\theta \right) \right], \infty\right) }(U_{N, x}(u) )f(\mathbf{Y}_{N,x}(u))\right]-\mathbb{P}(Z_{1}\leq t)\mathbb{E}\left[ f(\mathbf{Y}_{N,x}(u))\right]\right|\\
			&\leq & \left| \mathbb{E}\left[ 1_{ \left[  -6t/\left[\pi \theta \left( \frac{t}{\sqrt{N}}+\theta \right) \right], \infty\right) }(U_{N, x}(u) )f(\mathbf{Y}_{N,x}(u))\right]-\mathbb{P}\left( Z \geq   \frac{ 6(-t) }{\pi \theta \left( \frac{t}{\sqrt{N}}+\theta \right) }\right) \mathbb{E}\left[ f(\mathbf{Y}_{N, x}(u))\right]\right| \\
			&&+ \left| \mathbb{P}(Z_{1} \leq t) - \mathbb{P}\left( Z \geq   \frac{ 6(-t) }{\pi \theta \left( \frac{t}{\sqrt{N}}+\theta \right) }\right)\right|,
		\end{eqnarray*}
		where $Z\sim N(0, \sigma _{\theta } ^{2})$. 
		Hence,
		\begin{eqnarray*}
			&&\left| \mathbb{E}\left[ 1_{(-\infty, t]}	(\overline{\theta}_{N,x}) f(\mathbf{Y}_{N,x}(u))\right]-\mathbb{P}(Z_{1}\leq t)\mathbb{E}\left[ f(\mathbf{Y}_{N,x}(u))\right]\right| \\
			&\leq & \left| \mathbb{E} \left[1_{ \left(-\infty,  \frac{ 6(-t) }{\pi \theta \left( \frac{t}{\sqrt{N}}+\theta \right) } \right] }(U_{N, x}(u) )f(\mathbf{Y}_{N,x}(u))\right]-\mathbb{P}\left( Z \leq   \frac{ 6(-t) }{\pi \theta \left( \frac{t}{\sqrt{N}}+\theta \right) }\right) \mathbb{E} \left[f(\mathbf{Y}_{N, x}(u))\right]\right| \\
			&&+ \left| \mathbb{P}(Z_{1} \leq t) - \mathbb{P}\left( Z \leq   \frac{ 6t }{\pi \theta \left( \frac{t}{\sqrt{N}}+\theta \right) }\right)\right|\\
			&\leq & d_{K_{1}} \left( \mathbb{P}_{ (U_{N, x}(u), \mathbf{Y}_{N, x}(u))}, \mathbb{P}_{Z} \otimes \mathbb{P}_{ \mathbf{Y}_{N, x}(u)}\right) +  \left| \mathbb{P}(Z_{1} \leq t) - \mathbb{P}\left( Z \leq   \frac{ 6t }{\pi \theta \left( \frac{t}{\sqrt{N}}+\theta \right) }\right)\right|.
		\end{eqnarray*}
		By \Cref{cor11}, the first summand in the right-hand side is bounded by  
		$$
		C\left[\left( \frac{ m(N)}{N ^{\beta}}\right) ^{\frac{1}{2}} + \left( \frac{ m(N)}{N ^{\beta}}\right) ^{\frac{1}{4}} +\left( \frac{1}{N}\right) ^{\left(\beta-\frac{3}{4}\right)/4}\right],
		$$
		whilst the second summand has been estimated in the proof of Theorem \ref{tt5}, it is actually  bounded by $c(t, \theta) \frac{1}{\sqrt{N}}$. 
		The claim in the statement now follows.
	\end{proof}
	
	\paragraph{$(D)K$-asymptotic independence for the viscosity parameter: The linear case}
	
	\vskip0.2cm
	
	\noindent Regarding the (notion of) asymptotic independence of the viscosity parameter $\overline{\theta}_{N,x}$ with respect to $\mathbf{Y}_{N,x}$, more can be said if we restrict ourselves to the linear case.
	More precisely, we can show that aside the $DK_1$-asymptotic independence obtained in \Cref{coro:K1-ai-theta}, one can also guarantee $DK$-asymptotic independence.
	This is fundamentally due to the fact that for the linear SPDE
	\begin{equation}\label{eq:SPDE-linear}
		\frac{\partial u_{0}}{\partial t}(t,x)=\frac{\theta}{2} \Delta u_{0} (t,x) + \dot{W}(t,x), \quad t>0, u(0,x)=0, x\in \mathbb{R},
	\end{equation} 
	the solution $(u_0(t,x))_{t\geq 0}$ given by \eqref{mild2} and hence
	$\mathbf{Y}_{N,x}$ are centered Gaussian, and for such random vectors the following inequality is known.
	\begin{prop}\label{prop:dk-w}
		There exists some universal constant $C>0$ such that for every $m\geq 1$, $F$ an arbitrary $\mathbb{R}^m$-valued random variable, and $N_\Sigma$ an $m$-dimensional centered Gaussian vector with invertible covariance matrix $\Sigma$, we have
		\begin{equation}
			d_K(F,N_\Sigma)\leq C\|\Sigma\|_{\rm HS}^{1/4}\;\sqrt{d_W(F,N_\Sigma)},
		\end{equation}
		where $\|\Sigma\|_{\rm HS}$ denotes the Hilbert-Schmidt norm of the matrix $\Sigma$.
	\end{prop}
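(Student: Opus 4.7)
The plan is to run a standard smoothing argument adapted to the multivariate orthant $A_z := \prod_{i=1}^m (-\infty, z_i]$, whose indicator defines $d_K$. For $\varepsilon > 0$, I would introduce a Lipschitz approximation $h_{z,\varepsilon}^{+} : \mathbb{R}^m \to [0,1]$ sandwiching $\mathbf{1}_{A_z}$ from above, e.g.\
\begin{equation*}
h_{z,\varepsilon}^{+}(x) := \psi\!\left(\varepsilon^{-1} \max_{1 \leq i \leq m}(x_i - z_i)\right),
\end{equation*}
where $\psi$ is a fixed $1$-Lipschitz function on $\mathbb{R}$ equal to $1$ on $(-\infty,0]$ and to $0$ on $[1,+\infty)$, so that $\mathbf{1}_{A_z} \leq h_{z,\varepsilon}^{+} \leq \mathbf{1}_{A_{z+\varepsilon \mathbf{1}}}$. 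Since $x \mapsto \max_i(x_i - z_i)$ is $1$-Lipschitz in the Euclidean norm, one obtains $\|h_{z,\varepsilon}^{+}\|_{\mathrm{Lip}} \leq 1/\varepsilon$. A dual approximation $h_{z,\varepsilon}^{-}$, obtained by shifting the construction by $-\varepsilon \mathbf{1}$, sandwiches $\mathbf{1}_{A_z}$ from below.

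Combining the sandwich with the definition of $d_W$ yields, after a one-line telescope,
\begin{equation*}
\bigl|\mathbb{P}(F \leq z) - \mathbb{P}(N_\Sigma \leq z)\bigr| \leq \frac{1}{\varepsilon}\, d_W(F, N_\Sigma) + \mathbb{P}\bigl(N_\Sigma \in A_{z+\varepsilon \mathbf{1}} \setminus A_{z - \varepsilon \mathbf{1}}\bigr).
\end{equation*}
For the Gaussian boundary term, I would exploit the inclusion $A_{z + \varepsilon \mathbf{1}} \setminus A_{z - \varepsilon \mathbf{1}} \subseteq \bigcup_{i=1}^m \{x : |x_i - z_i| \leq \varepsilon\}$ together with the marginal density bound $\mathbb{P}(|N_{\Sigma,i} - z_i| \leq \varepsilon) \leq 2\varepsilon/\sqrt{2\pi \Sigma_{ii}}$, producing a boundary probability of order $\varepsilon\sum_i \Sigma_{ii}^{-1/2}$. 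Controlling this quantity in terms of $\|\Sigma\|_{\mathrm{HS}}^{1/2}$ by a matrix inequality, and then optimizing in $\varepsilon$ (the optimum being $\varepsilon \sim \sqrt{d_W/\|\Sigma\|_{\mathrm{HS}}^{1/2}}$), should produce the advertised bound up to a universal constant, after also taking the supremum over $z$.

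The main obstacle I anticipate is precisely the last link of the chain: relating the coordinate-wise quantity $\sum_i \Sigma_{ii}^{-1/2}$ to $\|\Sigma\|_{\mathrm{HS}}^{1/2}$ in a way that is robust to $\Sigma$ being merely invertible (so its eigenvalues may be arbitrarily small, and the naive bound $m/\sqrt{\lambda_{\min}(\Sigma)}$ is not what is claimed). A natural workaround is to first rescale each coordinate so that $\Sigma_{ii} = 1$, which trivializes the boundary estimate to $C m \varepsilon$, and then to track the scaling of $d_W(F, N_\Sigma)$ and of $\|\Sigma\|_{\mathrm{HS}}$ under this rescaling; because $d_K$ is preserved by any strictly increasing rescaling of each coordinate, this reduction is legitimate. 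Alternatively, one can bypass coordinate-wise slab estimates by applying a sharper Gaussian anti-concentration inequality for $\varepsilon$-thickenings of orthants expressed directly in terms of $\Sigma$, and then arguing as above.
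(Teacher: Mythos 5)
Your smoothing set-up (Lipschitz sandwiching of the orthant indicator, the bound $\varepsilon^{-1}d_W+\text{boundary term}$, optimization in $\varepsilon$) is the right machinery, and it is worth noting that the paper itself offers no proof at all beyond a citation of \cite{NPYa22}. However, the obstacle you flag at the end is not a technical nuisance to be engineered around: it is a genuine obstruction. Any argument of this type produces a Gaussian boundary term of the form $\varepsilon$ times an \emph{inverse} length scale of $\Sigma$ --- your $\sum_i\Sigma_{ii}^{-1/2}$, or $Cm^{1/4}\lambda_{\min}(\Sigma)^{-1/2}$ if one replaces the coordinate-wise slab bound by Ball's Gaussian surface-area bound for convex sets applied to $\Sigma^{-1/2}A_z$ --- and such quantities have the opposite monotonicity in $\Sigma$ to $\|\Sigma\|_{\rm HS}$. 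Neither workaround you propose repairs this: rescaling to $\Sigma_{ii}=1$ trivializes the boundary term but multiplies $d_W$ by up to $\max_i\Sigma_{ii}^{-1/2}$, so the inverse variances reappear on the other side of the ledger; and the sharper anti-concentration inequalities for thickened orthants (e.g., Nazarov's) again carry $(\min_i\Sigma_{ii})^{-1/2}$.

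In fact the inequality cannot be proved as stated because it is false. Replacing $(F,N_\Sigma)$ by $(\lambda F,\lambda N_\Sigma)$ leaves $d_K$ unchanged but multiplies $d_W$ by $\lambda$ and $\|\Sigma\|_{\rm HS}^{1/4}$ by $\lambda^{1/2}$, so the right-hand side scales like $\lambda$ and tends to $0$ as $\lambda\to 0$ while the left-hand side does not (take $m=1$, $F\equiv\sigma$ and $N_\Sigma=\sigma Z$: then $d_K\approx\Phi(1)$ for every $\sigma$, while $\|\Sigma\|_{\rm HS}^{1/4}\sqrt{d_W}\leq C\sigma\to0$). What the smoothing argument genuinely yields --- and what the cited result must amount to --- is $d_K(F,N_\Sigma)\leq C\,m^{1/8}\lambda_{\min}(\Sigma)^{-1/4}\sqrt{d_W(F,N_\Sigma)}$, which, using $m\,\lambda_{\min}(\Sigma)^2\leq\|\Sigma\|_{\rm HS}^2$, can be rewritten as $C\,\|\Sigma^{-1}\|_{op}^{1/2}\,\|\Sigma\|_{\rm HS}^{1/4}\,\sqrt{d_W(F,N_\Sigma)}$; the statement in the Proposition has dropped the non-universal factor $\|\Sigma^{-1}\|_{op}^{1/2}$. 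So the gap in your proof is real and unclosable for the statement as printed, but the fault lies with the statement rather than with your strategy; to prove the corrected version, run your argument through convex sets with the bound $\gamma_m(A^{\varepsilon}\setminus A^{-\varepsilon})\leq Cm^{1/4}\varepsilon$ for the standard Gaussian measure $\gamma_m$ and a convex $A$, applied to $A=\Sigma^{-1/2}A_z$ with $\varepsilon$ replaced by $\varepsilon\|\Sigma^{-1/2}\|_{op}$, and then optimize in $\varepsilon$ exactly as you do.
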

	\begin{proof}
		It follows directly from \cite[{Proposition A.1 \& Remark A.2}]{NPYa22}
	\end{proof}
	Consequently, we derive the following:
	\begin{coro}\label{coro:ai-theta-linear}
		Let $x\in \mathbb{R}$, $\left(u_0(t,x)\right)_{t\geq 0}$ be the solution to the linear SPDE \eqref{eq:SPDE-linear}, given by \eqref{mild2},  $\overline{\theta}_{N, x}(u_0)$ be given by \eqref{21i-1}, and $ (\mathbf{Y}_{N, x}(u_0), N\geq 1)$ be given by \eqref{ynx}. 
		Furthermore,  $Z \sim N\left(0, \sigma _{\theta } ^{2}\right)$ with $ \sigma _{\theta } $ given by (\ref{s2}), and $Z_{1}\sim N\left(0, \sigma_{1, \theta}^{2}\right)$, whilst $\sigma_{1, \theta}$ is given in \Cref{tt5}.
		Then, there exists a constant $C_\theta$ that depends on $\theta$ such that  
		\begin{equation}\label{eq:U-ai-linear}
			d_{K}\left(P_{ (U _{N, x}(u_{0}),\mathbf{Y}_{N,x}(u_{0}))}, P _{ Z}\otimes P_{ \mathbf{Y}_{ N, x}(u_{0})}\right)\leq C_\theta \left( \frac{ m(N)}{\sqrt{N}} \right) ^{\frac{1}{2}}, \quad N\geq 1.
		\end{equation}
		Moreover, for every $t\in \mathbb{R}$ there exists a second constant $c(t,\theta)$ such that
		\begin{equation}\label{theta-ai-linear}
			\begin{aligned}
				&\sup_{\bar{t}_N\in \mathbb{R}^{m(N)}}
				\left| P \left(\overline{\theta}_{N,x}(u_{0})\leq t, \mathbf{Y}_{N, x}(u_{0})\leq \bar{t}_N\right) - P( Z\leq t) P \left(\mathbf{Y}_{N, x}(u_{0})\leq \bar{t}_N\right)\right|\\
				&\quad \leq C_\theta\left( \frac{ m(N)}{\sqrt{N}} \right) ^{\frac{1}{2}} + c(t,\theta)\frac{1}{\sqrt{N}}, \quad N\geq 1.
			\end{aligned}    
		\end{equation}
		In particular, if $\lim_{N\to \infty}\frac{m(N)}{\sqrt{N}}=0$ then the sequences $\left(U_{N,x}(u_0), N\geq 1\right)$ and $\left(\mathbf{Y}_{N,x}(u_0),N\geq 1\right)$ are $K$-asymptotically independent, whilst $\left(\overline{\theta}_{N,x}(u_0), N\geq 1\right)$ and $\left(\mathbf{Y}_{N,x}(u_0), N\geq 1\right)$ are $DK$-asymptotically independent.
	\end{coro}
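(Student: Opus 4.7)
The plan splits naturally into the two estimates \eqref{eq:U-ai-linear} and \eqref{theta-ai-linear}. The key additional structural input available in the linear case that is absent in the semilinear one is that $\mathbf{Y}_{N,x}(u_0) = (u_0(t_i,x))_{i\in J_N}$ is a centered Gaussian vector, since each of its components belongs to the first Wiener chaos of $W$. As a consequence, the product measure $\mathbb{P}_Z \otimes \mathbb{P}_{\mathbf{Y}_{N,x}(u_0)}$ is the law of a centered Gaussian vector on $\mathbb{R}^{1+m(N)}$ with block-diagonal covariance $\Sigma = \mathrm{diag}(\sigma_\theta^2, \Sigma_Y)$, where $\Sigma_Y$ is the covariance matrix of $\mathbf{Y}_{N,x}(u_0)$. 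Linear independence of the kernels $h_{t_i,x}$ (for $i\in J_N$) in $L^2(\mathbb{R}_+\times\mathbb{R})$ guarantees that $\Sigma_Y$, and hence $\Sigma$, is invertible, so that Proposition \ref{prop:dk-w} is applicable.

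For \eqref{eq:U-ai-linear}, I would apply Proposition \ref{prop:dk-w} with $F = (U_{N,x}(u_0), \mathbf{Y}_{N,x}(u_0))$ and $N_\Sigma$ distributed as $\mathbb{P}_Z \otimes \mathbb{P}_{\mathbf{Y}_{N,x}(u_0)}$. Using the explicit covariance formula recalled in Section 3.1, each entry of $\Sigma_Y$ is bounded by $1/\sqrt{\pi\theta}$ uniformly, so
\begin{equation*}
\|\Sigma\|_{\rm HS}^2 \,\leq\, \sigma_\theta^4 + \frac{m(N)^2}{\pi\theta}, \qquad \text{hence} \qquad \|\Sigma\|_{\rm HS}^{1/4} \,\leq\, C_\theta\, m(N)^{1/4}
\end{equation*}
for $m(N)\geq 1$. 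Combining Proposition \ref{prop:dk-w} with the Wasserstein estimate from Theorem \ref{tt2} yields
\begin{equation*}
d_K\bigl(\mathbb{P}_{(U_{N,x}(u_0),\mathbf{Y}_{N,x}(u_0))}, \mathbb{P}_Z\otimes \mathbb{P}_{\mathbf{Y}_{N,x}(u_0)}\bigr) \,\leq\, C\, \|\Sigma\|_{\rm HS}^{1/4}\sqrt{d_W} \,\leq\, C_\theta \, m(N)^{1/4} \cdot (m(N)/N)^{1/4},
\end{equation*}
which is exactly the bound $C_\theta (m(N)/\sqrt{N})^{1/2}$ stated in \eqref{eq:U-ai-linear}.

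For \eqref{theta-ai-linear}, I would follow the translation strategy used in the proof of Theorem \ref{tt5}: the identity $\{\overline{\theta}_{N,x}(u_0)\leq t\} = \{U_{N,x}(u_0) \geq c_N(t)\}$ with $c_N(t) := -6t/[\pi\theta(t/\sqrt{N}+\theta)]$ gives
\begin{align*}
&\bigl| P(\overline{\theta}_{N,x}(u_0)\leq t, \mathbf{Y}_{N,x}(u_0)\leq \bar t_N) - P(Z_1\leq t)\, P(\mathbf{Y}_{N,x}(u_0)\leq \bar t_N)\bigr| \\
&\quad \leq\; \bigl| P(U_{N,x}(u_0)\geq c_N(t), \mathbf{Y}_{N,x}(u_0)\leq \bar t_N) - P(Z\geq c_N(t))\,P(\mathbf{Y}_{N,x}(u_0)\leq \bar t_N)\bigr| \\
&\qquad + \bigl| P(Z\geq c_N(t)) - P(Z_1\leq t) \bigr|.
\end{align*}
The first term is controlled, uniformly in $\bar t_N$, by the Kolmogorov bound \eqref{eq:U-ai-linear}. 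For the second, the relation $Z_1 \stackrel{d}{=} (\pi\theta^2/6)Z$ together with the symmetry of $Z$ gives $P(Z_1\leq t) = P(Z\geq -6t/(\pi\theta^2))$, and the boundedness of the Gaussian density of $Z$ reduces the estimate to $|c_N(t) + 6t/(\pi\theta^2)|$, which is $O(1/\sqrt{N})$ with a $t$-dependent multiplicative constant. Adding the two contributions yields \eqref{theta-ai-linear}, and taking $t$ fixed and $m(N)/\sqrt{N}\to 0$ gives the $DK$-asymptotic independence, while the uniformity in $t$ of \eqref{eq:U-ai-linear} upgrades the conclusion for $(U_{N,x}(u_0),\mathbf{Y}_{N,x}(u_0))$ to the stronger $K$-asymptotic independence. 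The only real obstacle is to arrange the two exponents so that $\|\Sigma\|_{\rm HS}^{1/4}$ and $\sqrt{d_W}$ combine into the expected rate $(m(N)/\sqrt{N})^{1/2}$; once the uniform $\theta$-dependent bound on the entries of $\Sigma_Y$ is in hand, this is straightforward.
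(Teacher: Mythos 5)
Your proposal is correct and follows essentially the same route as the paper: apply Proposition \ref{prop:dk-w} together with the Wasserstein bound of Theorem \ref{tt2}, control $\|\Sigma\|_{\rm HS}^{1/4}\leq C_\theta\, m(N)^{1/4}$ (your uniform entrywise bound $1/\sqrt{\pi\theta}$ is a slightly cleaner way to get the same estimate the paper obtains by summing the explicit covariance), and then transfer to $\overline{\theta}_{N,x}(u_0)$ via the set identity $\{\overline{\theta}_{N,x}\leq t\}=\{U_{N,x}\geq c_N(t)\}$ plus the $O(1/\sqrt{N})$ Gaussian comparison, exactly as in Corollary \ref{coro:K1-ai-theta}. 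Your remark on the invertibility of $\Sigma_Y$ is a point the paper glosses over, and your reading of $Z_1$ in \eqref{theta-ai-linear} is the intended one.
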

	\begin{proof}
		First of all we note again that $\mathbf{Y}_{N,x}(u_0)$ is Gaussian and centered, 
		so by \Cref{tt2} and \Cref{prop:dk-w} we have
		\begin{equation*}
			d_{K}\left(P_{ (U _{N, x}(u_{0}),\mathbf{Y}_{N,x}(u_{0}))}, P _{ Z}\otimes P_{ \mathbf{Y}_{ N, x}(u_{0})}\right)\leq C \|\Sigma_N\|_{\rm HS}^{1/4}\left( \frac{ m(N)}{N} \right) ^{\frac{1}{4}},
		\end{equation*}
		where $\Sigma_N$ is the covariance matrix of the Gaussian vector $(Z,\mathbf{Y}_{N,x}(u_0))\in \mathbb{R}^{1+m(N)}$ with $Z$ independent of $\mathbf{Y}_{N,x}(u_0)$, $N\geq 1$.
		To handle $\|\Sigma_N\|_{\rm HS}$, recall that
		by \eqref{eq:cov-linear} we have
		\begin{equation*}
			\mathbb{E}\left[ u_{0} (t,x) u_{0} (s, x)\right]= \frac{1}{\sqrt{2\pi \theta}}\left( \sqrt{ t+s}-\sqrt{\vert t-s \vert }\right), \quad s,t\geq 0,
		\end{equation*}
		so taking into account that $\mathbf{Y}_{N,x}(u_0)$ is given by \eqref{ynx}, we have
		\begin{equation*}
			\begin{aligned}
				\|\Sigma_N\|^2_{\rm HS}
				&=\sigma_\theta^4+\sum\limits_{i,j\in J_N} \left(\mathbb{E}\left[ u_{0} (t_i,x) u_{0} (t_j, x) \right]\right)^2
				=\sigma_\theta^4+\frac{1}{2\pi \theta N}\sum\limits_{i,j\in J_N} \left( \sqrt{ i+j}-\sqrt{\vert i-j \vert }\right)^2\\
				&=\sigma_\theta^4+\frac{1}{2\pi \theta N}\left[\sum_{i\in J_N}2i+2\sum_{1<i\in J_N}\sum_{J_N\ni j<i}\left( \sqrt{ i+j}-\sqrt{i-j }\right)^2\right]\\
				&\leq\sigma_\theta^4+\frac{1}{2\pi \theta N}\left[2Nm(N)+2\sum_{1<i\in J_N}\sum_{J_N\ni j<i}2j\right]\\
				&\leq \sigma_\theta^4+\frac{2Nm(N)+4N\left[m(N)\right]^2}{\pi \theta N}\\
				&\leq \sigma_\theta^4+\frac{3\left[m(N)\right]^2}{\pi \theta}, \quad N\geq 1.
			\end{aligned}    
		\end{equation*}
		Now, inequality \eqref{eq:U-ai-linear} is immediate.
		
		To prove the second estimate \eqref{theta-ai-linear}, let us note that by following the same computational steps as in \Cref{coro:K1-ai-theta}, we get that for every $t\in \mathbb{R}$ there exists a constant $c(t,\theta)$ such that
		\begin{equation*}
			\begin{aligned}
				&\sup_{\bar{t}_N\in \mathbb{R}^{m(N)}}
				\left| P \left(\overline{\theta}_{N,x}(u_{0})\leq t, \mathbf{Y}_{N, x}(u_{0})\leq \bar{t}_N\right) - P( Z\leq t) P \left(\mathbf{Y}_{N, x}(u_{0})\leq \bar{t}_N\right)\right|\\
				&\quad\leq d_{K}\left(P_{ (U _{N, x}(u_{0}),\mathbf{Y}_{N,x}(u_{0}))}, P _{ Z}\otimes P_{ \mathbf{Y}_{ N, x}(u_{0})}\right)+\frac{c(t, \theta)}{\sqrt{N}}, \quad N\geq 1.
			\end{aligned}
		\end{equation*}
		Now, using the former estimate \eqref{eq:U-ai-linear} we get \eqref{theta-ai-linear}.
	\end{proof}
	
	\section{Appendix: Tools from Malliavin calculus}\label{s:malliavin}
	Here we briefly describe the elements from stochastic analysis that we need in the main body of the paper. 
	Consider $ \mathcal{H}$ a real separable Hilbert space and $(W(h), h \in  \mathcal{H})$ an isonormal Gaussian process on a probability space $(\Omega, {\cal{A}}, P)$, which is a centered Gaussian family of random variables such that ${\bf E}\left[ W(\varphi) W(\psi) \right]  = \langle\varphi, \psi\rangle_{ \mathcal{H}}$. Denote by  $I_{n}$ the multiple stochastic integral with respect to
	$W$ (see \cite{N}). This mapping $I_{n}$ is actually an isometry between the Hilbert space $ \mathcal{H}^{\odot n}$(symmetric tensor product) equipped with the scaled norm $\frac{1}{\sqrt{n!}}\Vert\cdot\Vert_{ \mathcal{H}^{\otimes n}}$ and the Wiener chaos of order $n$ which is defined as the closed linear span of the random variables $H_{n}(W(h))$ where $h \in  \mathcal{H}, \|h\|_{ \mathcal{H}}=1$ and $H_{n}$ is the Hermite polynomial of degree $n \in {\mathbb N}$
	\begin{equation*}
		H_{n}(x)=\frac{(-1)^{n}}{n!} \exp \left( \frac{x^{2}}{2} \right)
		\frac{d^{n}}{dx^{n}}\left( \exp \left( -\frac{x^{2}}{2}\right)
		\right), \hskip0.5cm x\in \mathbb{R}.
	\end{equation*}
	In particular, we have that $\mathbb{E}I_{n}(f)=0$ for all $f\in  \mathcal{H} ^{\otimes n}$ with $n\geq 1$ (see e.g. Lemma 1.1.1 in \cite{N}). The isometry of multiple integrals can be written as follows: for $m,n$ positive integers,
	\begin{eqnarray}
		\mathbb{E}\left(I_{n}(f) I_{m}(g) \right) &=& n! \langle \tilde{f},\tilde{g}\rangle _{ \mathcal{H}^{\otimes n}}\quad \mbox{if } m=n,\nonumber \\
		\mathbb{E}\left(I_{n}(f) I_{m}(g) \right) &= & 0\quad \mbox{if } m\not=n.\label{iso}
	\end{eqnarray}
	It also holds that
	\begin{equation*}
		I_{n}(f) = I_{n}\big( \tilde{f}\big),
	\end{equation*}
	where $\tilde{f} $ denotes the symmetrization of $f$.  We recall that any square integrable random variable which is measurable with respect to the $\sigma$-algebra generated by $W$ can be expanded into an orthogonal sum of multiple stochastic integrals
	\begin{equation}
		\label{sum1} F=\sum_{n=0}^\infty I_{n}(f_{n}),
	\end{equation}
	where $f_{n}\in  \mathcal{H}^{\odot n}$ are (uniquely determined)
	symmetric functions and $I_{0}(f_{0})=\mathbb{E}\left[  F\right]$.
	We will use the  hypercontractivity property of multiple stochastic integrals. Namely, if $F= \sum_{k=0} ^{n} I_{k}(f_{k}) $ with $f_{k}\in \mathcal{H} ^{\otimes k}$ then
	\begin{equation}
		\label{hyper}
		\mathbb{E}\vert F \vert ^{p} \leq C_{p} \left( \mathbb{E}F ^{2} \right) ^{\frac{p}{2}}.
	\end{equation}
	for every $p\geq 2$.


	For $p>1$ and $\alpha \in \mathbb{R}$ we introduce the Sobolev-Watanabe space $\mathbb{D}^{\alpha ,p }$  as the closure of
	the set of polynomial random variables with respect to the norm
	\begin{equation*}
		\Vert F\Vert _{\alpha , p} =\Vert (I -L) ^{\frac{\alpha }{2}} F \Vert_{L^{p} (\Omega )},
	\end{equation*}
	where $I$ represents the identity. We denote by $D$  the Malliavin  derivative operator that acts on smooth functions of the form $F=g(W(h_1), \dots , W(h_n))$ ($g$ is a smooth function with compact support and $h_i \in  \mathcal{H}$)
	\begin{equation*}
		DF=\sum_{i=1}^{n}\frac{\partial g}{\partial x_{i}}(W(h_1), \ldots , W(h_n)) h_{i}.
	\end{equation*}
	The operator $D$ is continuous from $\mathbb{D}^{\alpha , p} $ into $\mathbb{D} ^{\alpha -1, p} \left(  \mathcal{H}\right).$ The adjoint of $D$ is the divergence integral, denoted by $\delta$. It acts from $\mathbb{D} ^{\alpha -1, p} \left(  \mathcal{H}\right)$ onto $\mathbb{D}^{\alpha , p} $. The following duality relationship holds for every $F \in \mathbb{D}^{1,2}, u\in \mathbb{D}^{1,2}(\mathcal{H})$:
	\begin{equation}
		\label{dua}
		\mathbb{E} F \delta (u)= \mathbb{E} \langle DF, u\rangle_{\mathcal{H}}.
	\end{equation}
	
	The operator $L$, known as the
	generator of the Ornstein-Uhlenbeck semigroup, is defined as follows:
	\begin{equation*}
		LF=-\sum_{n\geq 0} nI_{n}(f_{n}),
	\end{equation*}
	if $F$ is given by (\ref{sum1}) such that $\sum_{n=1}^{\infty} n^{2}n! \Vert f_{n} \Vert ^{2} _{{\mathcal{H}}^{\otimes n}}<\infty$. Furthermore, we also define the pseudo-inverse of the operator $L$, denoted by $L^{-1}$, as follows: for each $F\in L^2(\Omega)$, we set $L^{-1} F=\sum_{n \geqslant 1}-\frac{1}{n} J_n(Z)$, where $J_n(Z)$ denotes the projection of $F$ onto the $n$th Wiener chaos. We mention that $L^{-1}$  is an operator with values in $\mathbb{D}^{2,2}$.  Moreover, for every $F\in L^2(\Omega)$, we have $L L^{-1} F=F-E(F)$, hence $L^{-1}$ acts as the inverse of the operator $L$ for centered random variables.

	We will intensively use the product formula for multiple integrals.
	It is well-known that for $f\in  \mathcal{H}^{\odot n}$ and $g\in  \mathcal{H}^{\odot m}$
	\begin{equation}\label{prod}
		I_n(f)I_m(g)= \sum _{r=0}^{n\wedge m} r! \left( \begin{array}{c} n\\r\end{array}\right) \left( \begin{array}{c} m\\r\end{array}\right) I_{m+n-2r}(f\otimes _r g),
	\end{equation}
	where $f\otimes _r g$ means the $r$-contraction of $f$ and $g$ (see e.g. Section 1.1.2 in \cite{N}). This contraction is defined, when $  \mathcal{H}= L^{2}(T, \mathbb{B}, \nu)$ (where $\nu$ is a sigma-finite measure without atoms)
	\begin{eqnarray}
		&&	\label{contra}
		(f \otimes _{r} g) (t_{1},...,t_{n+m-2r})\\
		&=& \int_{T^{r}} f(u_{1},...,u_{r}, t_{1},...,t_{n-r})g(u_{1},...,u_{r}, t_{n-r+1},...,t_{n+m-2r})d\nu(u_{1})....d\nu(u_{r}),\nonumber
	\end{eqnarray}
	for $r=1,..., n\wedge m$ and $f\otimes _{0}g=f\otimes g$, the tensor product. It holds that $f\otimes _{r}g\in  \mathcal{H} ^{\otimes n+m-2r}= L^{2}(T ^{n+m-2r})$. In general, the contraction $f\otimes _{r}g$ is not symmetric and we denote by $f\widetilde{\otimes } _{r} g$ its symmetrization.

\end{document}